\setlist[enumerate]{leftmargin=.5in}
\setlist[itemize]{leftmargin=.5in}
\newcommand{\mcl}{\mathcal}
\newcommand{\mbf}{\mathbf}
\newcommand{\mbb}{\mathbb}
\newcommand{\Div}{\text{div}}
\newcommand{\dd}{{\rm d}}
\newcommand{\bx}{\mbf x}
\newcommand{\bu}{\mbf u}
\newcommand{\bv}{\mbf v}
\newcommand{\bz}{\mbf z}
\newcommand{\by}{\mbf y}
\newcommand{\bw}{\mbf w}
\newcommand{\bo}{\mbf o}
\newcommand{\eps}{\epsilon}
\newcommand{\vphi}{\varphi}
\newcommand{\bphi}{{\bm{\phi}}}
\newcommand{\bpsi}{{\bm{\psi}}}
\newcommand{\bvphi}{{\bm{\vphi}}}
\newcommand{\Md}{M_\Omega}
\newcommand{\cc}{K}
\renewcommand{\S}{\widetilde{\C}}
\newcommand{\tK}{\widetilde{K}}
\newcommand{\mN}{\mcl{N}}
\newcommand{\mH}{\mcl{H}}
\newcommand{\mK}{\mcl{K}}
\newcommand{\mL}{\mcl{L}}
\newcommand{\mB}{\mcl{B}}
\newcommand{\mU}{\mcl{U}}
\newcommand{\mV}{\mcl{V}}
\newcommand{\mP}{\mcl{P}}
\newcommand{\mA}{\mcl{A}}
\newcommand{\tL}{\widetilde{L}}
\newcommand{\tphi}{\widetilde{\phi}}
\newcommand{\tbphi}{\widetilde{\bphi}}
\newcommand{\tN}{\widetilde{N}}
\newcommand{\tz}{\widetilde{z}}
\newcommand{\tbz}{\widetilde{\bz}}
\newcommand{\tchi}{\widetilde{\chi}}
\newcommand{\tTheta}{\widetilde{\Theta}}
\newcommand{\R}{\mbb{R}}
\newcommand{\EE}{\mbb{E}}
\newcommand{\Law}{{\rm Law}}
\newcommand{\st}{{\rm\,s.t.}}
\newcommand{\C}{\mathcal{K}}
\newcommand{\K}{\mathcal{K}}
\newcommand{\hP}{\overline{P}}
\newcommand{\hB}{\overline{B}}
\newcommand{\hF}{\overline{F}}
\newcommand{\cC}{\mathcal{C}}
\newcommand{\tr}{\operatorname{tr}}
\definecolor{darkred}{rgb}{.7,0,0}
\definecolor{darkgreen}{rgb}{.15,.55,0}
\definecolor{darkblue}{rgb}{0,0,0.7}
\DeclareMathOperator*{\minimize}{{\rm minimize}}
\newenvironment{runningexample}[1]
  {\innercustomexample}
  {\endinnercustomexample}
\newcommand{\yc}[1]{{#1}}
\newcommand{\bh}[1]{{#1}}
\newcommand{\ho}[1]{{#1}}
\newcommand{\as}[1]{{#1}}
\title{Solving and Learning Nonlinear PDEs with Gaussian Processes}
\author{Yifan Chen \and Bamdad Hosseini \and Houman Owhadi\footnote{Corresponding author.} \and  Andrew M Stuart
  \thanks{Computing and Mathematical Sciences, Caltech, Pasadena, CA
  (\email{yifanc@caltech.edu},
\email{bamdadh@caltech.edu}, \email{owhadi@caltech.edu}, \email{astuart@caltech.edu}).}}
\begin{document}

\maketitle

\begin{abstract}
We introduce
a simple, rigorous, and unified framework for solving
nonlinear partial differential equations (PDEs), and for solving inverse problems (IPs) involving the identification of parameters in PDEs, using the framework of Gaussian processes. The proposed \as{approach: (1) provides a natural generalization of collocation kernel methods to nonlinear PDEs and IPs; (2) has guaranteed convergence for a very general class of PDEs, and comes equipped  with a path to compute error bounds for specific PDE approximations; (3)} inherits the state-of-the-art computational complexity of linear solvers for dense kernel matrices.
The main idea of our method is to approximate the solution of a given PDE as the \as{maximum a posteriori (MAP) estimator of a Gaussian process conditioned on solving the PDE at a finite number of collocation points.} Although this optimization problem is infinite-dimensional, it can be reduced to a finite-dimensional one by introducing additional variables corresponding to the values of the derivatives of the solution at collocation points; this generalizes
the representer theorem arising in Gaussian process regression.
\as{The reduced optimization problem has the form of a quadratic objective function subject to nonlinear constraints; it is solved with a variant of the Gauss--Newton method. The resulting
algorithm (a) can be interpreted as solving successive linearizations of the nonlinear PDE,
and (b) in practice is found  to converge  in a small number  of iterations (2 to 10), for a wide  range of PDEs.
Most traditional approaches to IPs interleave parameter updates with numerical solution of the PDE; our algorithm
solves for both parameter and PDE solution simultaneously.} Experiments on nonlinear elliptic PDEs, Burgers' equation, a regularized Eikonal equation, and an IP for permeability identification in Darcy flow
illustrate the efficacy and scope of our framework.
\end{abstract}

\begin{keywords}
Kernel Methods, Gaussian Processes,
Nonlinear Partial Differential Equations, Inverse Problems, Optimal Recovery.
\end{keywords}

\begin{AMS}
60G15, 
65M75, 
65N75, 
65N35, 
47B34, 
41A15, 
35R30, 
34B15. 
\end{AMS}

\section{Introduction}\label{sec:introduction}

Two hundred years ago, modeling a physical problem and solving the underlying differential equations would have required the expertise of Cauchy or Laplace, and it would have been done by hand through a laborious process of discovery.
Sixty years ago, the resolution of the underlying differential equation would have been \as{addressed} using computers, but  modeling
and design of the solver would have still been done by hand.
Nowadays, there is increasing interest in automating these steps by casting them as machine learning problems.
The resulting methods can be divided into two main categories: (1)
methods based on variants of artificial neural networks (ANNs) \cite{goodfellow2016deep}; and (2) methods based on kernels and Gaussian Processes (GPs) \cite{williams1996gaussian,scholkopf2018learning}. In the context of (1)
there has been recent activity toward solving nonlinear PDEs, whilst the systematic development of  methods of type (2) for nonlinear PDEs has remained largely open. However, methods of type
(2) hold potential for considerable advantages over methods of type (1), both in terms of theoretical
analysis and numerical implementation. In this paper, our goal is to develop
 \as{a simple kernel/GP framework for solving  nonlinear PDEs and related inverse problems (IPs); in particular the methodology
 we introduce has  the following properties:}
 \begin{itemize}
     \item \as{the proposed collocation setting for solving nonlinear PDEs and IPs is a direct generalization of optimal recovery kernel methods for linear PDEs} \cite{Owhadi:2014, owhadi2017multigrid,  owhadi2019operator}, and a natural generalization of radial basis function collocation methods \cite{zhang2000meshless, wendland2004scattered}, and of meshless kernel methods \cite{schaback2006kernel};
     \item theoretically, the proposed method is provably convergent and amenable to rigorous numerical analysis,
     \as{suggesting new research directions to
     generalize the analysis of linear regression methods \cite{wendland2004scattered} to the proposed collocation setting for solving nonlinear PDEs};
     \item computationally, it inherits the complexity of state-of-the-art solvers for dense kernel matrices,
     \as{suggesting new research to
     generalize the work of \cite{schafer2020sparse}, which
     developed optimal approximate methods for linear regression,
     to the proposed collocation setting for solving nonlinear PDEs and IPs};
     \item \as{for IPs the methodology is closely aligned
     with methodologies prevalent in the PDE-constrained
     optimization literature \cite{hinze2008optimization}
     and suggests the need for new computational and
     theoretical analyses generalizing the standard
     optimization and Bayesian approaches found in
     \cite{stuart-bayesian-lecture-notes, engl1996regularization,kaipio2006statistical}.}
 \end{itemize}
Since ANN methods can also be interpreted as kernel methods with kernels learned from data \cite{jacot2018neural, owhadi2020ideas, wilson2016deep}, our framework could also be used for theoretical analysis of such methods.

In Subsection~\ref{sec:problem-setup} we summarize the theoretical foundations and numerical implementation of our method in the context of a simple nonlinear elliptic PDE. In Subsection~\ref{sec:relevant-lit} we give
a literature review, placing the proposed methodology in the context of other research at the
intersection of machine learning and PDEs. The outline of the article is given in Subsection~\ref{sec:outline}.

\subsection{Summary of the Proposed Method}\label{sec:problem-setup}
For demonstration purposes, we summarize the key ideas of our method for solving a nonlinear second-order elliptic PDE. This PDE will also serve as a running example in Section~\ref{sec:PDEs} where we present an abstract framework for general nonlinear PDEs.

Let $d \geq 1$ and $\Omega$ be a bounded open domain in $\R^d$ with a \bh{Lipschitz boundary $\partial \Omega$}.
Consider the following nonlinear elliptic equation for $u^\star$:
\begin{equation}
  \label{elliptic-proto-PDE}
  \left\{
    \begin{aligned}
      -\Delta u^\star(\bx) + \tau\big(u^\star(\bx)\big) &= f(\bx), &&\forall
      \bx \in \Omega\,, \\
      u^\star(\bx) & = g(\bx), &&\forall
      \bx \in \partial\Omega\, ,
    \end{aligned}
    \right.
\end{equation}
where $f:\Omega \to \R, \bh{g: \partial \Omega \to \R}$ and $\tau: \R \to \R$ are given continuous functions.
We assume that $f,g, \tau$ are chosen appropriately so that the PDE has a unique classical solution
(for abstract theory of nonlinear elliptic PDEs see for example \cite{ruadulescu2008qualitative, smoller2012shock}).
In Subsection~\ref{sec:intro:implementation} we will present a concrete numerical experiment
where $\tau(u) = u^3$ and $g(\bx) =0$.

\subsubsection{Optimal Recovery}\label{sec:intro:mapest}
\as{Our proposed method starts with an optimal recovery problem that can also be interpreted as \as{maximum a posterior} (MAP) estimation for  a GP constrained by a PDE}. More precisely, consider a nondegenerate, symmetric, and positive definite  kernel function
$\cc: \overline{\Omega} \times \overline{\Omega} \rightarrow \R$ \bh{where $\overline{\Omega} := \Omega \cup \partial \Omega$}.
Let $\mU$
be the RKHS associated with $\cc$ and denote the RKHS norm by $\| \cdot \|$. Let $1\leq \Md < M  <\infty$ and  \bh{fix} $M$ points in $\overline{\Omega}$ such that $\bx_1,\ldots, \bx_{M_\Omega} \in \Omega$ and $\bx_{\Md+1},\ldots,\bx_M\in \partial \Omega$.
Then, our method approximates the solution $u^\star$ of
\eqref{elliptic-proto-PDE} with a minimizer $u^\dagger$ of the following optimal recovery problem:
  \begin{equation}\label{running-example-optimization-problem}
    \left\{
      \begin{aligned}
  &\minimize_{u \in \mU}~\| u\| && \\
  &\st \quad -\Delta u(\bx_m)+ \tau\big(u(\bx_m)\big)=f(\bx_m), \quad &&\text{for } m=1,\ldots,\Md\,,\\
 & \hspace{6ex} u(\bx_m)=g(\bx_m),     \quad &&\text{for }m=\Md+1,\ldots,M\,.
\end{aligned}
\right.
\end{equation}
 Here, we assume $\cc$ is chosen appropriately so that $\mU \subset C^2(\Omega)\cap C(\overline{\Omega})$, which ensures the pointwise PDE constraints in \eqref{running-example-optimization-problem}
  are well-defined.

  A minimizer $u^\dagger$ can be interpreted as a MAP estimator of a GP $\xi \sim \mN(0,\mathcal{K})$\footnotemark
  (where $\mathcal{K}$ is the integral operator with kernel $K$)
  conditioned on
  satisfying the PDE at the collocation points $\bx_m, 1\leq m \leq M$.
  \as{Such a view has been introduced for solving linear PDEs in  \cite{Owhadi:2014, owhadi2017multigrid} and a closely related approach is studied in \cite[Sec.~5.2]{cockayne2016probabilistic}; the methodology
  introduced via \eqref{running-example-optimization-problem} serves as a prototype for generalization to nonlinear PDEs. Here it is important to note that in the nonlinear case the conditioned GP is no longer Gaussian in general; thus the solution of
  \eqref{running-example-optimization-problem} is a MAP estimator only and is not the conditional expectation, except in the
  case where $\tau(\cdot)$ is a linear function.}
  \footnotetext{{This Gaussian prior notation is equivalent to the GP notation $\mathcal{GP}(0,K)$, where $K$ is the covariance function. See further discussions in Subsection \ref{sec:constructing-theta}.}}

In the next subsections, we show equivalence of \eqref{running-example-optimization-problem} and a finite
dimensional constrained optimization problem \eqref{eqn: finite dim optimization for elliptic eqn}.
This provides existence\footnote{Uniqueness of a minimizer is not necessarily a consequence of the assumed
uniqueness of the solution to the PDE \eqref{elliptic-proto-PDE}. We provide additional discussions of uniqueness results in Subsection \ref{sec-uniqueness-optimality}.} of a minimizer to \eqref{running-example-optimization-problem}, as
well as the basis for a numerical method to approximate the minimizer, based on
solution of an unconstrained finite-dimensional optimization problem
\eqref{running-example-unconstrained-optimization-problem}.

\subsubsection{Finite-Dimensional Representation}\label{sec:intro:representertheorem}
  The key to our numerical algorithm for solving \eqref{running-example-optimization-problem}
  is a representer theorem that characterizes $u^\dagger$ via a finite-dimensional representation formula.
  To achieve this we first reformulate  \eqref{running-example-optimization-problem}
 as a two level optimization problem:
   \begin{equation}\label{running-example-two-level-optimization-problem}
     \left\{
     \begin{aligned}
       & \minimize_{\bz^{(1)} \in \R^M, \bz^{(2)} \in \R^{M_\Omega}} \: \left\{
       \begin{aligned}
          & \minimize_{u \in \mU} ~\|u\|\\
          &\st \quad  u(\bx_m)=z^{(1)}_m \text{ and } -\Delta u(\bx_m)=z^{(2)}_m,\text{ for } m=1,\ldots,M,
       \end{aligned} \right.\\
  &\st \quad z^{(2)}_m +\tau(z^{(1)}_m)=f(\bx_m), \hspace{28ex} \text{for } m=1,\ldots,\Md\,, \\
  &     \hspace{6ex}   z^{(1)}_m=g(\bx_m),  \hspace{37.5ex} \text{for } m=\Md+1,\ldots, M\,.
\end{aligned}
\right.
\end{equation}
 Denote $\phi^{(1)}_m=\updelta_{\bx_m}$
  and $\phi^{(2)}_m = \updelta_{\bx_m} \circ (- \Delta )$, where $\updelta_{\bx}$ is the
  Dirac delta function centered at $\bx$. We further use the shorthand notation
  $\bphi^{(1)}$ and $\bphi^{(2)}$ for the $M$ and $M_\Omega$-dimensional vectors with entries $\phi^{(1)}_m$ and $\phi^{(2)}_m$
  respectively,
  and  $\bphi$  for the $(M+M_\Omega)$-dimensional vector  obtained by concatenating $\bphi^{(1)}$ and $\bphi^{(2)}$.
  Similarly, we write $\bz$ for the $(M+M_\Omega)$-dimensional vector concatenating $\bz^{(1)}$ and $\bz^{(2)}$.

  For a fixed $\bz$, we can solve the first level optimization problem explicitly due to the  representer theorem\footnote{This is not the standard RKHS/GP representer theorem \cite[Sec.~2.2]{williams1996gaussian} in the sense that measurements include the pointwise observation of higher order derivatives of the underlying GP. See \cite{kimeldorf1971some} and \cite[p.~xiii]{wahba1990spline} for related spline representation formulas with derivative information.} (see  \cite[Sec.~17.8]{owhadi2019operator}), which leads to the conclusion that
\begin{equation}\label{running-example-minimizer-representer-form}
u(\bx)=\cc(\bx,\bphi) \cc(\bphi,\bphi)^{-1} \bz\, ;
\end{equation}
\as{here $\cc(\cdot,\bphi)$ denotes the $(M+M_\Omega)$-dimensional vector field with entries $\int \cc(\cdot,\bx')\phi_m(\bx')\,\dd \bx'$}
and $\cc(\bphi,\bphi)$ is the $(M+M_\Omega)\times (M+M_\Omega)$-matrix with entries
$\int \cc(\bx,\bx')\phi_m(\bx)\phi_j(\bx')\,\dd \bx\, \dd \bx'$ with the $\phi_m$ denoting the
entries of $\bphi$.
For this solution, $\|u\|^2=  \bz^T  \cc(\bphi,\bphi)^{-1}\bz $, so we can equivalently formulate \eqref{running-example-two-level-optimization-problem} as
a finite-dimensional optimization problem:
 \begin{equation}
 \label{eqn: finite dim optimization for elliptic eqn}
   \left\{
   \begin{aligned}
  & \minimize_{\bz \in \R^{M + M_\Omega}}  \quad   \bz^T  \cc(\bphi,\bphi)^{-1}\bz \\
  &\st  \quad  z^{(2)}_m +\tau(z^{(1)}_m)=f(\bx_m),  &&\text{for } m=1,\ldots,\Md\,, \\
  & \hspace{6ex} z^{(1)}_m=g(\bx_m), &&\text{for } m=\Md+1,\ldots, M\,.
\end{aligned}
\right.
  \end{equation}
  Moreover, using the equation $ z^{(2)}_m =f(\bx_m)-\tau(z^{(1)}_m)$ and the boundary data,
  we can further eliminate $\bz^{(2)}$ and rewrite it as an unconstrained problem:
   \begin{equation}\label{running-example-unconstrained-optimization-problem}
     \minimize_{\bz^{(1)}_{\Omega} \in \R^{\Md}}~
     \big(\bz^{(1)}_{\Omega}, g(\bx_{\partial\Omega}), f(\bx_{\Omega})-\tau(\bz^{(1)}_\Omega)\big)
     \cc(\bphi,\bphi)^{-1}
     \begin{pmatrix}\bz^{(1)}_{\Omega} \\
       g(\bx_{\partial\Omega}) \\
       f(\bx_{\Omega})-\tau(\bz^{(1)}_{\Omega})\end{pmatrix},
   \end{equation}
   where we used $\bx_{\Omega}$ and $\bx_{\partial\Omega}$ to denote the interior and boundary points
   respectively,
   $\bz^{(1)}_{\Omega}$ denotes the $\Md$-dimensional vector of the $z_i$
   for $i=1, \dots, \Md$ associated to the interior points $\bx_{\Omega}$ while $f(\bx_{\Omega}), g(\bx_{\partial\Omega})$
   and $\tau(\bz^{(1)}_\Omega)$ are vectors obtained  by applying the corresponding functions
   to   entries of their input vectors. For brevity we have suppressed the transpose
   \bh{signs in} the row vector multiplying the matrix from the left in  \eqref{running-example-unconstrained-optimization-problem}.

 The foregoing considerations lead to the following existence result which underpins\footnote{Although this existence result is desirable, it is not necessary in the sense that our variational analysis of  minimizers of \eqref{running-example-optimization-problem}, remains true for near-minimizers. Furthermore, the error estimates of Section \ref{sec:error-estimates} can be modified to include the error incurred by approximating $u^\star$ with a near-minimizer rather than a  minimizer $u^\dagger$.} our numerical
 method for \eqref{elliptic-proto-PDE}; furthermore  \eqref{running-example-unconstrained-optimization-problem} provides the basis for our numerical implementations. We summarize these facts:

 \begin{theorem}
   The variational problem \eqref{running-example-optimization-problem} has a minimizer of the form\\ $u^\dagger(\bx)=\cc(\bx,\bphi) \cc(\bphi,\bphi)^{-1} \bz^\dagger$, where $\bz^\dagger$ is a minimizer of \eqref{eqn: finite dim optimization for elliptic eqn}. Furthermore $u^\dagger(\bx)$ may be found by
   solving the unconstrained minimization problem \eqref{running-example-unconstrained-optimization-problem}
   for $\bz^{(1)}_{\Omega}$.
 \end{theorem}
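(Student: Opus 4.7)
The plan is to verify, in sequence, that the variational problem \eqref{running-example-optimization-problem} is equivalent to the two-level problem \eqref{running-example-two-level-optimization-problem}, that the latter reduces via a representer theorem to the finite-dimensional problem \eqref{eqn: finite dim optimization for elliptic eqn}, and finally that the unconstrained reduction \eqref{running-example-unconstrained-optimization-problem} admits a minimizer; existence for the original problem then follows by tracing the chain backwards and reading off the representer form for $u^\dagger$.

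First I would identify \eqref{running-example-optimization-problem} with \eqref{running-example-two-level-optimization-problem}. Given any admissible $u$ for \eqref{running-example-optimization-problem}, set $z^{(1)}_m := u(\bx_m)$ and $z^{(2)}_m := -\Delta u(\bx_m)$; the PDE constraints in \eqref{running-example-optimization-problem} become precisely the outer constraints on $\bz$, and $u$ is admissible for the corresponding inner problem. Conversely, any $u$ admissible for the inner problem at an outer-admissible $\bz$ satisfies the constraints of \eqref{running-example-optimization-problem}. Since the two problems share the same objective $\|u\|$, their infima and minimizers coincide. Next, for each fixed $\bz$, the inner problem is a pure RKHS-norm minimization subject to a finite set of linear equality constraints $\phi_m(u) = z_m$ with $\phi_m \in \{\updelta_{\bx_m}, \updelta_{\bx_m} \circ (-\Delta)\}$. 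The embedding $\mU \subset C^2(\Omega) \cap C(\overline{\Omega})$ makes these functionals bounded on $\mU$; combined with nondegeneracy of $\cc$ and distinctness of the collocation points, this yields linear independence of $\{\phi_m\}$ in $\mU^\ast$, hence symmetry and positive definiteness (and thus invertibility) of $\cc(\bphi,\bphi)$. The representer theorem cited from \cite[Sec.~17.8]{owhadi2019operator} then delivers the unique inner minimizer \eqref{running-example-minimizer-representer-form} with squared norm $\bz^T \cc(\bphi,\bphi)^{-1}\bz$, so that \eqref{running-example-two-level-optimization-problem} collapses exactly to \eqref{eqn: finite dim optimization for elliptic eqn}.

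To pass to \eqref{running-example-unconstrained-optimization-problem} I would then use the explicit form of the outer constraints to eliminate the boundary components of $\bz^{(1)}$ via $z^{(1)}_m = g(\bx_m)$ for $m > M_\Omega$ and the interior components of $\bz^{(2)}$ via $z^{(2)}_m = f(\bx_m) - \tau(z^{(1)}_m)$ for $m \leq M_\Omega$, leaving only the free variable $\bz^{(1)}_\Omega \in \R^{M_\Omega}$. Existence of a minimizer for the resulting unconstrained problem follows from continuity and coercivity: continuity of $\tau$ makes the objective $J(\bz^{(1)}_\Omega)$ continuous, while positive definiteness of $\cc(\bphi,\bphi)^{-1}$ gives $J(\bz^{(1)}_\Omega) \geq \lambda_{\min}\bigl(\cc(\bphi,\bphi)^{-1}\bigr)\, \|\bz\|^2$, and since $\bz^{(1)}_\Omega$ appears as a subvector of $\bz$ we get $J \to \infty$ as $\|\bz^{(1)}_\Omega\| \to \infty$. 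Reading the chain of equivalences backwards converts the resulting $\bz^\dagger_\Omega$ into a full $\bz^\dagger$ and then into $u^\dagger(\bx) = \cc(\bx,\bphi)\cc(\bphi,\bphi)^{-1}\bz^\dagger$. The only non-routine point in the plan is the invertibility of $\cc(\bphi,\bphi)$, which is where nondegeneracy of the kernel and the regularity embedding $\mU \subset C^2(\Omega) \cap C(\overline{\Omega})$ are essential; once this is in hand, everything else is algebraic bookkeeping plus a standard coercivity argument in $\R^{M_\Omega}$.
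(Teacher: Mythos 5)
Your proposal is correct and follows the same overall architecture as the paper: equivalence of \eqref{running-example-optimization-problem} with the two-level problem \eqref{running-example-two-level-optimization-problem}, collapse of the inner problem via the representer theorem to \eqref{eqn: finite dim optimization for elliptic eqn}, and then an existence argument in finite dimensions. The one place you genuinely diverge is the existence step. The paper works on the constrained problem \eqref{eqn: finite dim optimization for elliptic eqn}: it uses the true solution $u^\star$ to produce a feasible point $\bz^\star$, restricts the minimization to the set $\cC$ of feasible $\bz$ with $\bz^T\cc(\bphi,\bphi)^{-1}\bz \le (\bz^\star)^T\cc(\bphi,\bphi)^{-1}\bz^\star$, and argues $\cC$ is compact (closed constraint set, by continuity of $\tau$, intersected with a compact ellipsoid) and nonempty. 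You instead work on the eliminated problem \eqref{running-example-unconstrained-optimization-problem} and prove coercivity: $J(\bz^{(1)}_\Omega) \ge \lambda_{\min}\bigl(\cc(\bphi,\bphi)^{-1}\bigr)\,|\bz|^2 \ge \lambda_{\min}\bigl(\cc(\bphi,\bphi)^{-1}\bigr)\,|\bz^{(1)}_\Omega|^2$, which together with continuity of $\tau$ gives a minimizer over all of $\R^{\Md}$. Both are valid; your version has the small advantage of not needing the feasibility witness $\bz^\star$ (hence not needing $u^\star\in\mU$ for this particular step), while the paper's version makes explicit the useful a priori bound $\|u^\dagger\|\le\|u^\star\|$ that it reuses later in the convergence proof. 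Your aside on invertibility of $\cc(\bphi,\bphi)$ is slightly more explicit than the paper, which folds it into the standing nondegeneracy assumption on the kernel; note that linear independence of the functionals $\updelta_{\bx_m}$ and $\updelta_{\bx_m}\circ(-\Delta)$ in $\mU^\ast$ requires a little more than distinctness of the points, but this is a hypothesis the paper also takes for granted.
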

 \begin{proof}
 Problems \eqref{running-example-optimization-problem}, \eqref{running-example-two-level-optimization-problem} and \eqref{eqn: finite dim optimization for elliptic eqn} are equivalent.
 It is therefore sufficient to show that \eqref{eqn: finite dim optimization for elliptic eqn} has a minimizer. Write $\bz^\star$ for the vector with entries
  $z^{\star(1)}_m=u^\star(\bx_m)$ and  $z^{\star(2)}_m=-\Delta u^\star(\bx_m)$. Since $u^\star$ solves the PDE \eqref{elliptic-proto-PDE}, $\bz^\star$ satisfies the constraints on $\bz$ in
  \eqref{eqn: finite dim optimization for elliptic eqn}. It follows that the minimization in
  \eqref{eqn: finite dim optimization for elliptic eqn}
  can be restricted to the set \as{$\cC$ of $\bz$ that satisfies
  $\bz^T  \cc(\bphi,\bphi)^{-1}\bz \leq (\bz^\star)^T  \cc(\bphi,\bphi)^{-1}\bz^\star $ and the nonlinear constraints.
  The set $\cC$ is compact and non-empty:
  compact because $\tau$ is continuous and so the constraint set is closed as it is the pre-image of a closed set, and the  intersection of a closed set with a compact set is compact; and nonempty because it contains $\bz^*$. Thus}
  the objective function
$\bz^T K(\bphi,\bphi)^{-1} \bz$ achieves its minimum in the set $\cC.$ Once $\bz^{(1)}_{\Omega}$ is
found we can extend to the boundary points to obtain $\bz^{(1)}$, and use the
   nonlinear relation between $\bz^{(1)}$ and $\bz^{(2)}$ to reconstruct $\bz^{(2)}$. This gives
   $\bz^\dagger$.
 \end{proof}

 \subsubsection{Convergence Theory}\label{sec:intro:conv-error}

The consistency of our method is guaranteed by the
 convergence of $u^\dagger$ towards $u^\star$
 as $M$,
 the total number of collocation points, goes to infinity. We first present this result  in the case
 of the nonlinear PDE \eqref{elliptic-proto-PDE} and defer a
 more general version  to
 Subsection~\ref{sec:Conv-theory}. We also give a simple proof of convergence here as it is instructive in
 understanding why the method works and how the more general result can be established.
\bh{Henceforth we use $| \cdot |$ to denote the standard Euclidean norm and write $\mU \subseteq \mH$
to denote  $\mU$ being continuously embedded in Banach space $\mH$.}

\begin{theorem}\label{thm:intro:convergence} Suppose  that
  $\cc$ is chosen so that $\mU \subseteq H^s(\Omega)$   for some $s >2+d/2$ and
 that \eqref{elliptic-proto-PDE} has a unique classical solution $u^\star \in \mU$.
  Write $u^\dagger_M$ for a minimizer of \eqref{running-example-optimization-problem}
  with $M$ distinct collocation points $\bx_1,\ldots,\bx_M$.  Assume that, as $M\rightarrow \infty$,
  \begin{equation*}
      \sup_{\bx\in \Omega} \: \min_{1\leq m \leq M_\Omega}
      |\bx-\bx_m|\rightarrow 0
      \quad \text{and} \quad \sup_{\bx\in \partial \Omega} \: \min_{M_{\Omega+1}\leq m \leq M}
      |\bx-\bx_m|\rightarrow 0\, .
  \end{equation*}
  Then, as $M\rightarrow \infty$, $u^\dagger_M$ converges towards $u^\star$
  pointwise in $\Omega$ and in $H^t(\Omega)$ for any \bh{$t<s$}.
\end{theorem}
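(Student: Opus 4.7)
The plan is to combine a comparison bound with a standard compactness-and-pass-to-the-limit argument. The key observation is that, by hypothesis, $u^\star$ itself lies in $\mU$ and satisfies the PDE pointwise, so it is admissible for the optimal recovery problem \eqref{running-example-optimization-problem} for every choice of collocation points. Consequently $\|u^\dagger_M\| \leq \|u^\star\|$ for all $M$, giving a uniform bound of the sequence in the RKHS norm and, via the continuous embedding $\mU \subseteq H^s(\Omega)$, a uniform bound in $H^s(\Omega)$.

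Next I would pick some $t$ with $2+d/2 < t < s$ and, using reflexivity of $H^s(\Omega)$, extract a subsequence $\{u^\dagger_{M_k}\}$ that converges weakly in $H^s(\Omega)$ to some $u^\infty$. The Rellich--Kondrachov theorem on the Lipschitz domain $\Omega$ upgrades this to strong convergence in $H^t(\Omega)$, and since $t-2 > d/2$ the Sobolev embedding $H^t(\Omega) \hookrightarrow C^2(\overline{\Omega})$ yields $u^\dagger_{M_k} \to u^\infty$ in $C^2(\overline{\Omega})$. In particular $\Delta u^\dagger_{M_k} \to \Delta u^\infty$ uniformly on $\overline{\Omega}$, and by continuity of $\tau$ also $\tau(u^\dagger_{M_k}) \to \tau(u^\infty)$ uniformly.

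The third step is to identify $u^\infty$ with $u^\star$. Fix any $\bx \in \Omega$; the fill-distance hypothesis supplies interior collocation indices $m(k)$ with $\bx_{m(k)} \to \bx$, and passing to the limit in the collocation identity $-\Delta u^\dagger_{M_k}(\bx_{m(k)}) + \tau\bigl(u^\dagger_{M_k}(\bx_{m(k)})\bigr) = f(\bx_{m(k)})$, using uniform $C^2$ convergence together with continuity of $f$, yields $-\Delta u^\infty(\bx) + \tau(u^\infty(\bx)) = f(\bx)$. The analogous argument on $\partial \Omega$, using the boundary fill-distance assumption and continuity of $g$, gives $u^\infty = g$ on $\partial \Omega$. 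Hence $u^\infty$ is a classical solution of \eqref{elliptic-proto-PDE}, and uniqueness forces $u^\infty = u^\star$. Since this identification applies to every weakly convergent subsequence of $\{u^\dagger_M\}$ in $H^s(\Omega)$, a standard subsequence-of-subsequence argument promotes convergence to the full sequence in $H^t(\Omega)$ for every $t < s$, and pointwise convergence in $\Omega$ follows from the embedding $H^t \hookrightarrow C^0(\overline{\Omega})$ for any $t > d/2$.

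The chief obstacle is arranging strong enough convergence to justify passing to the limit in the pointwise nonlinear collocation constraint, which demands simultaneous uniform convergence of $u$ and $\Delta u$; this is precisely what forces the regularity assumption $s > 2 + d/2$ and the chain $H^s \hookrightarrow\hookrightarrow H^t \hookrightarrow C^2(\overline{\Omega})$. A secondary, more technical point is the boundary term: it is essential that the boundary fill-distance also vanishes, and one relies implicitly on the Lipschitz regularity of $\partial \Omega$ for the Sobolev embedding to extend up to the boundary; any weakening of the uniqueness assumption on \eqref{elliptic-proto-PDE} would degrade the conclusion to subsequential convergence to some classical solution.
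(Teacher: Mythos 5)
Your proposal is correct and follows essentially the same route as the paper's proof: the admissibility of $u^\star$ gives the uniform bound $\|u^\dagger_M\|\leq\|u^\star\|$, the compact embedding $H^s\hookrightarrow\hookrightarrow H^t$ with $t\in(2+d/2,s)$ yields a subsequence converging in $C^2$, the vanishing fill distances let you pass to the limit in the pointwise constraints, and uniqueness plus a subsequence-of-subsequences argument finishes. The only cosmetic difference is that you pass to the limit along collocation points $\bx_{m(k)}\to\bx$ in the constraint identity, whereas the paper phrases the same step as a triangle inequality on the residual $v=-\Delta u^\dagger_\infty+\tau(u^\dagger_\infty)-f$ using its uniform continuity; these are interchangeable.
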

\begin{proof}
  The proof relies on a simple compactness argument together with the Sobolev embedding theorem \cite{adams2003sobolev, brezis2010functional}. \yc{First, as $u^\star$ satisfies the constraint in
  \eqref{running-example-optimization-problem} and $u^\dagger_M$ is the minimizer,
  it follows that
  $\|u^\dagger_M\| \leq \|u^\star\|$ for all $M\geq 1$.
  This implies $\|u^\dagger_M\|_{H^s(\Omega)}\leq C \|u^\star\|$ because $\mU$ is continuously embedded into $H^s(\Omega)$. Let $t\in (2+d/2,s)$ so that $H^t(\Omega)$ embeds continuously into $C^2(\Omega) \cap C(\overline{\Omega})$ \bh{\cite[Thm.~4.12]{adams2003sobolev}}.
  Since $H^s(\Omega)$ is compactly embedded into $H^t(\Omega)$, we deduce that there exists a subsequence $\{M_p, p\geq 1\} \subset \mathbb{N}$ and a limit $u^\dagger_\infty \in H^t(\Omega)$ such that $u^\dagger_{M_p}$ converges
  towards $u^\dagger_\infty$ in the $H^t(\Omega)$ norm, as $p\rightarrow \infty$. This also implies convergence in $C^2(\Omega)$ due to the continuous embedding of
  $H^t(\Omega)$ into $C^2(\Omega) \cap C(\overline{\Omega})$.}

  \yc{We now show that $u^\dagger_{\infty}$ satisfies the desired PDE in \eqref{elliptic-proto-PDE}. Denote by $v = -\Delta u^\dagger_{\infty}+ \tau\big(u^\dagger_{\infty}\big) -f \in C(\Omega)$ and $v_p = -\Delta u^\dagger_{M_p}+ \tau\big(u^\dagger_{M_p}\big) -f \in C(\Omega)$. For any $\bx \in \Omega$ and $p\geq 1$, use of the triangle inequality shows that
  \begin{equation}
  \begin{aligned}
  \label{eqn:triangle ineq}
    \left|v(\bx)\right|&\leq \min_{1\leq m \leq M_{p,\Omega}}\left(\left|v(\bx) -v(\bx_m)\right|+\left|v(\bx_m)-v_p(\bx_m)\right|\right)\\
    &\leq \min_{1\leq m \leq M_{p,\Omega}}\left|v(\bx) -v(\bx_m)\right| +\|v-v_p\|_{C(\Omega)} \, ,
  \end{aligned}
  \end{equation}
  where we have used the fact that $v_p(\bx_m)=0$, and $M_{p,\Omega}$ is the number of interior points associated with the total $M_p$ collocation points. Since $v$ is continuous in a compact domain, it is also uniformly continuous. Thus, it holds that $\lim_{p \to \infty} \min_{1\leq m \leq M_{p,\Omega}}\left|v(\bx) -v(\bx_m)\right| = 0$ since the fill-distance converges to zero by assumption. Moreover, we have that $v_p$ converges to $v$ in the $C(\Omega)$ norm due to the $C^2(\Omega)$ convergence from $u^\dagger_{M_p}$ to $u^\dagger_{\infty}$. Combining these facts with \eqref{eqn:triangle ineq}, and taking $p \to \infty$, we obtain $v(\bx)=0$, and thus $-\Delta u^\dagger_{\infty}(\bx)+ \tau\big(u^\dagger_{\infty}(\bx)\big) =f(\bx)$, for $\bx \in \Omega$. Following a similar argument, we get $u^\dagger_{\infty}(\bx)=g(\bx)$ for $\bx \in \partial \Omega$. Thus, $u^\dagger_{\infty}$ is a classical solution to \eqref{elliptic-proto-PDE}. By assumption, the solution is unique, so we must have
  $u^\dagger_\infty=u^\star$. As the limit $u^\dagger_\infty$ is independent of the particular subsequence, the whole sequence
  $u^\dagger_M$ must converge to $u^\star$ in $H^t(\Omega)$. Since $t \in (2+d/2,s)$, we also get pointwise convergence and convergence in $H^t(\Omega)$ for any $t <s$.
  }
  {}
  \end{proof}




We note that  this convergence theorem  requires $K$ to be adapted to the solution space of the PDE so that $u^\star$ belongs to $\mU$. In our numerical experiments, we will use a Gaussian kernel.
However, if $f$ or the boundary $\partial \Omega$ are not sufficiently regular, then
the embedding conditions $u^\star\in \mU \subseteq H^s(\Omega)$  may  be satisfied by using kernel as
the Green's function of some power of the Laplacian, instead of a Gaussian kernel; the latter induces smoothness
on $\mU$ which may be incompatible with the regularity of $u^\star$ for irregular $f$ and $\partial \Omega$.
 We will provide further insights into the convergence properties of our method, including an error bound between
 the solution  $u^\star$  and a minimizer $ u^\dagger$  in Section  \ref{sec:error-estimates}.

    \begin{figure}[htp]
     \centering
     \includegraphics[width=15cm]{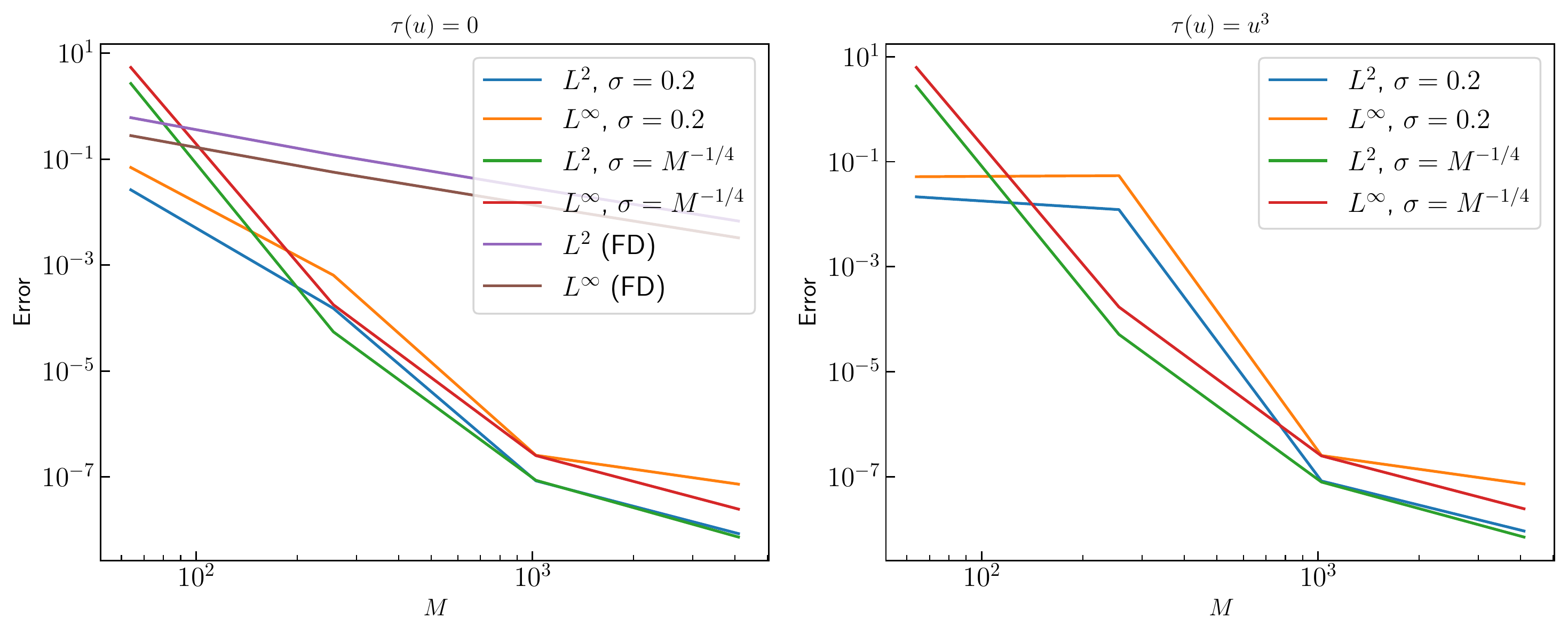}
     \caption{$L^2$ and $L^{\infty}$ error plots for numerical approximations of
     $u^\star$, the solution to \eqref{elliptic-proto-PDE},  as a function of
     the number of collocation points $M$. Left: $\tau(u) =0$; both the kernel collocation method using Gaussian kernel with $\sigma = 0.2$ and  $M^{-1/4}$ and the finite difference (FD) method were implemented. Right: $\tau(u)=u^3$; the kernel collocation method using Gaussian kernel with $\sigma = 0.2$ and  $M^{-1/4}$ were used. In both cases, an adaptive nugget term  with global parameter \bh{$\eta = 10^{-13}$ was used to regularize the kernel matrix $\Theta$ (see Appendix~\ref{sec:nuggets} for details)}.}
     \label{fig:elliptic convergence curve}
 \end{figure}

 \subsubsection{Numerical Framework}\label{sec:intro:implementation}
 The representation of $u^\dagger$ via the optimization problem \eqref{running-example-unconstrained-optimization-problem} is the cornerstone of
 our numerical framework for solving nonlinear PDEs. Indeed, efficient
 solution of \eqref{running-example-unconstrained-optimization-problem}, and in turn
 construction and inversion of the matrix $K(\bphi, \bphi)$, are the most costly
 steps of our numerical implementation. We summarize several main ingredients of our algorithm below:
 \begin{itemize}
     \item We propose an efficient variant of the \bh{Gauss--Newton}  algorithm in
     Section \ref{sec:GN-algorithm}. Although, in general, \eqref{running-example-unconstrained-optimization-problem} is a nonconvex problem, our algorithm typically converges in  \as{between $2$ and $10$ steps in all the experiments we have conducted.}

 \item \as{In practice we perturb $\cc(\bphi, \bphi)$ to improve its condition number, and hence the numerical stability
 of the algorithm, by adding a small diagonal matrix; this
 perturbation is adapted
 to the problem at hand, as outlined in Appendix~\ref{sec:nuggets}; the approach generalizes the idea of a ``nugget'' as widely used
 in GP regression.}

 \item To evaluate the cost function in \eqref{running-example-unconstrained-optimization-problem}, we pre-compute the Cholesky factorization of \as{the (perturbed) kernel
 matrix and store it for multiple uses. State-of-the-art linear solvers for dense kernel matrices} can be used for this step.
 \end{itemize}

As a demonstration, we present here a simple experiment to show the convergence of our method. We take $d=2$, $\Omega=(0,1)^2$ and  $\tau(u) = 0$ or $u^3$ together
  with homogeneous Dirichlet boundary conditions $g(\bx) =0$.
  The true solution is prescribed to be $u^\star(\bx) = \sin(\pi x_1)\sin(\pi x_2) + 4\sin(4\pi x_1)\sin(4\pi x_2)$ and the corresponding right hand side $f(\bx)$ is computed using the equation.

 We choose the Gaussian kernel
  $$K(\bx,\by; \sigma) = \exp\Bigl(-\frac{|\bx-\by|^2}{2\sigma^2}\Bigr)\, ,$$
  with lengthscale parameter $\sigma$; we will fix this parameter, but note that
  the framework is naturally adapted to learning of such hyperparameters.
  We set $M=64,256,1024,4096$, sampling the collocation points on a uniform grid  of points
  within $\Omega$. We apply our algorithm to solve the PDEs and compute the $L^2$ and $L^\infty$ errors to $u^\star$.
  In the case $\tau(u)=0$, which corresponds to a linear PDE, we also compare our algorithm with a second-order finite difference (FD) method.
  For the nonlinear case  $\tau(u) = u^3$, we observe that
  the \bh{Gauss--Newton}  algorithm only needs 2 steps to converge.
 The errors versus $M$ are depicted in Figure \ref{fig:elliptic convergence curve}.
 The following observations can be made from this figure:
\begin{itemize}
    \item In the linear case $\tau(u)=0$, where the corresponding optimization problem \eqref{running-example-unconstrained-optimization-problem} is convex, our algorithm
    outperforms the FD method in terms of accuracy and rate of convergence. This can be attributed to the fact that the true solution is very smooth, and the Gaussian kernel allows for efficient approximation.

    \item \bh{The choice of the kernel parameter $\sigma$ has a
     profound impact on the accuracy and rate of convergence of the algorithm, especially when $M$ is not very large.
    This
    implies the importance of choosing a ``good kernel'' that is
    adapted to the solution space of the PDE, and highlights  the importance of hyperparameter
    learning.}

    \item In the nonlinear setting, our algorithm demonstrates similar convergence behavior
     to the linear setting. Once again, an appropriate choice of $\sigma$ leads to
     significant gains in solution accuracy.

\end{itemize}

\subsection{Relevant Literature}\label{sec:relevant-lit}
Machine learning and statistical inference approaches to numerical approximation have attracted
a lot of attention in  recent years thanks to their remarkable  success in engineering applications.
Such approaches can be broadly divided into two categories: (1) GP/Kernel-based methods, and (2) ANN-based methods.

GPs and kernel-based methods have long been used in function approximation, regression, and machine learning \cite{williams1996gaussian}.
  As surveyed in \cite{owhadi2019statistical}:
\begin{quote}
 \emph{``[kernel  approaches can be traced back to] Poincar{\'e}'s course in Probability Theory  \cite{Poincare:1896} and to the pioneering investigations of Sul'din \cite{sul1959wiener}, Palasti and Renyi \cite{PalastiRenyi1956}, Sard
 \cite{Sard1963},  Kimeldorf and Wahba \cite{Kimeldorf70} (on the correspondence between Bayesian estimation and spline smoothing/interpolation) and Larkin \cite{larkin1972gaussian} (on the correspondence between Gaussian
 process regression and numerical approximation). Although their study initially
attracted little attention among numerical analysts \cite{larkin1972gaussian}, it was revived in Information Based
Complexity (IBC) \cite{Traub1988}, Bayesian numerical analysis \cite{diaconis1988bayesian}, and more recently  in Probabilistic Numerics \cite{Hennig2015} and Bayesian numerical homogenization \cite{Owhadi:2014}.''}
\end{quote}
For recent reviews of the extensive applications of GP/kernel methods see \cite{cockayne2019bayesian, owhadi2019statistical, swiler2020survey} and \cite[Chap.~20]{owhadi2019operator}. In particular, they have been introduced to motivate novel methods for solving ordinary differential equations (ODEs) \cite{skilling1992bayesian,calderhead2009accelerating,schober2014probabilistic} \as{and underlie the collocation approach advocated
for parameter identification in ODEs as described in
\cite{ramsay2007parameter}.}
For PDEs, the use of kernel methods can be traced back to meshless collocation methods with radial basis functions
\cite{zhang2000meshless, wendland2004scattered, schaback2006kernel}. Furthermore,
a recent systematic development towards solving PDEs was initiated in \cite{Owhadi:2014, owhadi2017multigrid, owhadi2017gamblets, owhadi2019operator} and has lead to the identification of fast solvers for elliptic PDEs and dense kernel matrices \cite{schafer2017compression, schafer2020sparse} with state-of-the-art complexity versus accuracy guarantees.
The effectiveness of these methods has been supported by
 well-developed theory \cite{owhadi2019operator} residing at the interfaces between numerical approximation \cite{wahba1990spline,wendland2004scattered}, optimal recovery \cite{micchelli1977survey}, information-based complexity \cite{Traub1988}, and GP regression \cite{bogachev1998gaussian}, building on the perspective introduced in \cite{diaconis1988bayesian, larkin1972gaussian, micchelli1977survey, traub1984average, wahba1990spline}.
In particular there is a one to one correspondence \cite{owhadi2019operator, schafer2017compression} between (1) the locality of basis functions (gamblets) obtained with kernel methods  and the \emph{screening effect} observed in kriging \cite{stein2002screening},  (2)  Schur complementation and conditioning Gaussian vectors, and (3) the approximate low-rank structure of inverse stiffness matrices obtained with kernel methods and  variational properties of Gaussian conditioning.
Furthermore, although the approach of modeling a deterministic function (the solution $u^\star$ of a PDE) as a sample from a Gaussian distribution may seem counterintuitive, it can be understood (in the linear setting \cite{owhadi2019operator}) as an optimal minimax strategy for recovering $u^\star$ from partial measurements. Indeed, as
 in Von Neumann's theory of games, optimal strategies are mixed (randomized) strategies and (using quadratic norms to define losses) GP regression (kriging) emerges as an optimal minimax approach to numerical approximation \cite{micchelli1977survey, owhadi2019operator}.

On the other hand,
 ANN methods  can be traced back to \cite{djeridane2006neural, lagaris1998artificial, lagaris2000neural, rico1993continuous, uchiyama1993solving}  and,
 \as{although developed for ODEs several decades ago
 \cite{cochocki1993neural,rico1993continuous}, with some of the work generalized
 to PDEs \cite{krischer1993model}, their systematic development for solving PDEs has been initiated only recently. This systematic
 development includes
  the Deep Ritz Method} \cite{weinan2018deep}, Physics Informed Neural Networks \cite{raissi2019physics} (PINNs), DGN \cite{sirignano2018dgm},
 and  \cite{han2018solving} which employs a probabilistic formulation of the nonlinear PDE via  the Feynman-Kac formula.
Although the basic idea is to replace unknown functions with ANNs and minimize some loss with respect to the parameters of the ANN to identify the solution, there are by now many variants of ANN approaches, and we refer to   \cite{Karniadakis2021} for a recent review of this increasingly popular topic at the interface between scientific computing and deep learning.
 While ANN approaches have been shown to be empirically successful on complex problems (e.g., machine learning physics \cite{raissi2020hidden}), they may fail on simple ones \cite{wang2020and, van2020optimally}
if the architecture of the ANN is not adapted to the underlying PDE \cite{wang2020eigenvector}. Moreover, the theory of ANNs is still in its infancy; most ANN-based PDE solvers lack theoretical guarantees and convergence analyses are often limited to restricted linear instances \cite{shin2020convergence, wang2020and}. Broadly speaking, in comparison with kernel methods, ANN methods have both limited theoretical foundations and unfavorable complexity vs accuracy estimates. We also remark that ANN methods are suitable
for the learning of the parametric dependence of solutions of differential equations
\cite{zhu2018bayesian, bhattacharya2020model, li2020fourier, li2020multipole, boncoraglio2021active};
however, GP and kernel methods may also be used in this context, and the random feature method
provides a natural approach to implementing such methods in high dimensions \cite{nelsen2020random}.

Regardless, the theory and computational framework of kernel methods may  naturally be extended to ANN methods to
 investigate\footnote{Beyond the infinite width neural tangent kernel regime \cite{jacot2018neural, wang2020and}.} such methods and possibly accelerate  them by viewing them as ridge regression with data-dependent kernels and following  \cite{schafer2017compression, schafer2020sparse}.  To this end,  ANN methods can be interpreted as kernel methods with data-dependent kernels  in two equivalent ways: (1) as solving PDEs in an RKHS space spanned by a feature map parameterized by the initial layers of the ANN that is learned from data; or, (2) as kernel-based methods with kernels that are parameterized by the inner layers of the network.
 For instance,  \cite{owhadi2020ideas} shows that
 Residual Neural Networks  \cite{he2016deep} (ResNets) are  ridge regressors with
 warped kernels \cite{sampson1992nonparametric,perrin1999modelling}.
 Since our  approach employs an explicit kernel, it allows us to learn that kernel directly, as discussed in Subsection \ref{sec:KernelFlow}.
Given the notorious difficulty of developing
 numerical methods for nonlinear PDEs \cite{tadmor2012review},  it is to some degree surprising that (as suggested by our framework) (A) this difficulty can universally be decomposed into three parts: (1) the  compression/inversion of dense kernel matrices, (2) the selection of the kernel, and (3) the minimization of the reduced finite-dimensional optimization problem \eqref{PDE-nonlinear-representer-optimization}, and
 (B) a significant portion of the resulting analysis can be reduced to that of linear regression   \cite{owhadi2019operator}.

 Beyond solving PDEs, ANN methods have also been used in data-driven discretizations, and discovery of PDEs that allow for the identification of the governing model \cite{raissi2019physics, long2018pde, bar2019learning}; this leads to applications in IPs. Our method, viewed as a GP conditioned on PDE constraints at collocation points, can be interpreted
 as solving an IP  with Bayesian inference and a Gaussian prior \cite{stuart2010inverse}. Indeed, if we relax the
 PDE constraints as in Subsection~\ref{sec:relaxed-constraints} then a minimizer $u^\dagger$ coincides
 with a MAP estimator of a posterior measure obtained by viewing the PDE constraints as nonlinear pointwise measurements
 of a field $u$ with a Gaussian prior $\mN(0, \C)$. Bayesian IPs with Gaussian priors have been studied
 extensively (see \cite{stuart-bayesian-lecture-notes, cotter-approximation, stuart2010inverse} and references therein). The standard approach for their solution is to discretize the problem using spectral projection or
 finite element methods, and compute posterior MAP estimators \cite{dashti2013map}
 or employ Markov chain Monte Carlo algorithms \cite{cotter2013mcmc} to simulate posterior samples. Our abstract
 representation theorem outlined in Section~\ref{sec:GMs-on-Banach-Spaces} completely characterizes the
 MAP estimator of posterior measures with Gaussian priors in settings where the forward map
 is written as the composition of a nonlinear map with bounded linear functionals acting on the parameter.
 Indeed, this is precisely the approach that we employ in Section~\ref{sec:Inverse-Problems} to solve IPs
 with PDE constraints. However, the main difference between our methodology and existing methods in
 the literature is that we pose the IP as that of recovering the solution of the PDE $u^\dagger$ simultaneously with learning
 the unknown PDE parameter with independent Gaussian priors on both unknowns.

\ho{
We now turn to motivation for the GP interpretation.
  The PDE solvers obtained here are deterministic and could be described from an entirely classical numerical approximation perspective. However we emphasize the GP interpretation for two reasons: (i) it is
  integral to the derivation of the methods; and (ii) it allows the numerical solution of the
  PDE to be integrated into a larger engineering pipeline and, in that context, the posterior
  distribution of the GP conditioned on the PDE at collocation points provides a measure
  of uncertainty quantification. Using the GP perspective as a pathway to the discovery of numerical methods, was the motivation for the work in \cite{Owhadi:2014,owhadi2017multigrid}.
Indeed, as discussed in \cite{Owhadi:2014}, while the discovery of numerical solvers for PDEs is usually based on a combination of insight and guesswork, this process can be facilitated to the point of being automated,
using this GP perspective.  For instance, for nonsmooth PDEs,  basis functions with near optimal accuracy/localization tradeoff and operator valued wavelets can be identified by conditioning physics informed Gaussian priors on localized linear measurements (e.g., local averages or  pointwise values). Physics informed Gaussian priors can, in turn, be identified by (a) filtering uninformed Gaussian priors through the inverse operator \cite{Owhadi:2014}, or (b) turning the process of computing fast with partial information into repeated zero-sum games with physics informed losses (whose optimal mixed strategies are physics informed Gaussian priors) \cite{owhadi2017multigrid}.
The paper \cite{raissi2018numerical} generalized (a) to time-dependent PDEs by filtering uninformed priors
through linearized PDEs obtained via time stepping.
The paper \cite{cockayne2016probabilistic} emphasized the probabilistic interpretation of numerical errors obtained from this Bayesian perspective.} \as{In particular \cite[Sec.~5.2]{cockayne2016probabilistic}
describes a method identical to the one considered here (and \cite{Owhadi:2014}) for linear problems; in the setting of semi-linear PDEs, it is suggested in \cite{cockayne2016probabilistic} that
latent variables could be employed to efficiently sample from posterior/conditional measures \ho{(see also \cite{owhadi2020ideas} where latent variables were employed to reduce nonlinear optimal recovery problems via two-level optimization as in \ref{running-example-two-level-optimization-problem}).} Although the
methodology proposed in our paper agrees with that in
\cite{cockayne2016probabilistic} for linear problems, the
methodology we propose appears to be
more general, and differs in the case of nonlinear problems to
which both approaches apply.}

\subsection{Outline of Article}\label{sec:outline}
The remainder of this article is organized as follows. We give an overview of the abstract theory of
\as{GPs on Banach spaces in Section~\ref{sec:overview}; we
establish notation, and summarize basic results and ideas that
are used throughout the remainder of the article.} Section~\ref{sec:PDEs} is dedicated to
the development of our numerical framework for solving nonlinear PDEs with kernel \as{methods; we outline our
assumptions on the PDE, present a general convergence theory, discuss our approach to implementation,} and present numerical experiments. In Section~\ref{sec:Inverse-Problems}
we extend our nonlinear PDE framework to IPs and discuss the implementation differences between the PDE and IP settings, followed by numerical experiments involving a benchmark  IP in subsurface flow.
Finally, we present additional discussions, results, and possible extensions of
our method in Section~\ref{sec:discussion}.
\as{Appendix~\ref{app:reg-theta} is devoted to the small
diagonal regularization of kernel matrices (``nugget'' term)
and outlines general principles as well as specifics for
the examples considered in this paper.}

\section{Conditioning GPs on Nonlinear Observations}\label{sec:overview}
 In this section, we outline the abstract theory  of
 RKHSs/GPs \bh{and their connection to} Banach spaces endowed with \as{quadratic norms; this forms the
 framework for the proposed methodology to solve} PDEs and IPs.
 We start by recalling some basic facts \bh{about
  GPs} in Subsection~\ref{sec:GMs-on-Banach-Spaces}. This is followed
 in Subsection~\ref{sec:rep-theorems-and-constraints} by
 general results pertaining to conditioning of GPs on linear and nonlinear observations, \as{leading to} a representer theorem that is the cornerstone of our numerical algorithms.
 Some of these results may be known to experts, but to the best of our knowledge, they are
 not presented in the existing literature with the coherent goal of solving nonlinear problems
 via the conditioning of GPs; hence
 this material may be of independent interest to the reader.

\subsection{GPs and Banach Spaces Endowed with a Quadratic Norm}\label{sec:GMs-on-Banach-Spaces}

Consider a separable Banach space $\mU$  and its dual $\mU^\ast$  with their duality
pairing denoted by $[ \cdot, \cdot ]$. We further assume that $\mU$ is endowed with a quadratic norm $\| \cdot \|$, i.e., there exists a linear bijection $\K: \mU^\ast \to \mU$ that is symmetric  ($[\K \phi,\varphi]=[\K \varphi,\phi]$), positive ($[\K \phi,\phi]>0$ for $\phi \ne 0$), and such
 that
\begin{equation*}
  \| u\|^2 = [ \K^{-1} u, u ], \qquad \forall u \in \mU\, .
\end{equation*}
The operator $\K$  endows  $\mU$ and $\mU^\ast$ with the following
inner products:
\begin{equation*}
  \begin{aligned}
    \langle u, v \rangle &:= [ \K^{-1} u, v ],  &&u,v \in \mU\, , \\
    \langle \phi, \varphi \rangle_{\ast} &:= [ \phi,  \K \varphi ],  &&\phi, \varphi \in \mU^\ast\, .
\end{aligned}
\end{equation*}
Note that the
$\langle \cdot, \cdot \rangle_\ast$ inner product defines a norm on $\mU^\ast$ that coincides
with the standard dual norm of $\mU^\ast$, i.e.,
\begin{equation*}
  \| \phi \|_{\ast} = \sup_{u \in \mU, u \neq 0} \frac{ [\phi, u ] }{\| u\| } = \sqrt{ [\phi, \K \phi] }\, .
\end{equation*}

As in \cite[Chap.~11]{owhadi2019operator},
although $\mU, \mU^\ast$ are also  Hilbert spaces under the quadratic norms $\|\cdot\|$ and $\|\cdot\|_{\ast}$, we will keep using the Banach space terminology to emphasize the fact that our dual pairings will not be based on the
inner product through the Riesz representation theorem,
  but on a  different  realization of the dual space.
  A particular case of the setting considered here is $\mU=H^s_0(\Omega)$  (writing $H^s_0(\Omega)$ for the closure of the set of smooth functions with compact support in $\Omega$ with respect to the Sobolev norm $\|\cdot\|_{H^s(\Omega)}$), with its dual
  $\mU^\ast=H^{-s}(\Omega)$ defined by the pairing   $[\phi,v]:=\int_{\Omega}{\phi u}$ obtained from the  Gelfand triple
 $H^s(\Omega) \subset L^2(\Omega) \subset H^{-s}(\Omega)$. Here $\K$
 can be defined as solution map
 of an elliptic operator\footnote{Given $s>0$, we call an  invertible operator $\mathcal{L}\,:\, H^s_0(\Omega)\rightarrow H^{-s}(\Omega)$ elliptic, if it is  positive and symmetric in the sense that $\int_\Omega u \mathcal{L} u\geq 0$ and $\int_\Omega u \mathcal{L} v= \int_\Omega v \mathcal{L} u\geq 0$.} mapping $H^s_0(\Omega)$ to $H^{-s}(\Omega)$.

We say that  $\xi$ is the {\it canonical GP} \cite[Chap.~17.6]{owhadi2019operator}
on $\mU$ and  write $\xi \sim \mN( 0, \K)$, if and only if
$\xi$ is a linear isometry from $\mU^\ast$ to a centered Gaussian space (a closed linear space of scalar valued centered Gaussian random variables). \bh{The word canonical indicates that the covariance operator of $\xi$ coincides with the
bijection $\mK$ defining the norm on $\mU$.}
Write $[\phi,\xi]$ for the image of $\phi\in \mU^\ast $ under $\xi$ and note that the following properties hold:
\begin{equation*}
  \EE [ \phi, \xi ] = 0 \quad \text{and} \quad
  \EE [ \phi, \xi ] [ \varphi, \xi ] = [ \phi, \K \varphi ],
  \quad \forall \phi, \varphi \in \mU^\ast\, .
\end{equation*}

\as{The space} $\mU$ coincides with the \bh{Cameron--Martin} space of the GP $\mN(0, \K)$.
In the setting where $\mK$ is defined through a covariance kernel $\cc$ (such as in
Subsection~\ref{sec:problem-setup} and later in Subsection~\ref{sec:GN-algorithm})
then $\mU$ coincides with the RKHS space of the kernel $\cc$ \cite[Sec.~2.3]{van2008reproducing}.

\subsection{Conditioning GPs with Linear and Nonlinear Observations}\label{sec:rep-theorems-and-constraints}
Let $\phi_1, \dots, \phi_N$ be $N$ non-trivial elements of $\mU^\ast$ and define
\begin{equation}\label{Phi-def}
  \bphi := \left( \phi_1, \dots, \phi_N \right) \in (\mU^\ast)^{\otimes N}\, .
  \end{equation}
  Now consider the canonical GP $\xi \sim \mN( 0, \C)$, then $[\bphi, \xi]$ is an $\R^N$-valued Gaussian vector
  and  $[\bphi, \xi] \sim \mN(0, \Theta)$ where
  \begin{equation}\label{Theta-matrix-def}
    \Theta \in \R^{N \times N}, \qquad \Theta_{i,n} = [ \phi_i, \C \phi_n ],\quad  1\leq i,n \leq N\, .
  \end{equation}

The following proposition characterizes the conditional
  distribution of GPs under these linear observations;
  to simplify the statement it is useful to write $\C (\bphi, \varphi)$ for the vector with entries
   $[ \phi_i , \C \varphi]$. This type of vectorized  notation is used in \cite{owhadi2019operator}.

  \begin{proposition}\label{prop:conditional-GP-with-linear-measurements}
    Consider a vector $\by \in \R^N$ and the canonical GP $\xi \sim \mN(0, \C)$.
    Then $\xi$  conditioned on $ [\bphi,\xi] = \by$
    is also Gaussian. Moreover if $\Theta$ is invertible then
    $\Law [ \xi| [\bphi,\xi] = \by ] = \mN( u^\dagger, \C_\bphi)$ with conditional mean defined by $u^\dagger = \by^T \Theta^{-1} \C \bphi$
    and conditional covariance operator
    defined     by
 $       [ \varphi , \C_\bphi \varphi]=
   [ \varphi , \C \varphi] -
    \C (\varphi,\bphi) \Theta^{-1}\C (\bphi, \varphi) , \forall \vphi \in \mU^\ast.$
  \end{proposition}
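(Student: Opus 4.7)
The plan is to reduce the infinite-dimensional conditioning problem to standard finite-dimensional Gaussian conditioning by testing against an arbitrary $\varphi \in \mU^\ast$. The key observation is that since $\xi \sim \mN(0,\C)$ is a linear isometry from $\mU^\ast$ into a centered Gaussian space, the joint random vector $\bigl([\varphi,\xi],\,[\bphi,\xi]\bigr) \in \R \times \R^N$ is centered Gaussian with covariance determined by the identities $\EE[\phi,\xi][\vphi,\xi]=[\phi,\C\vphi]$. Its covariance structure is read off as $\EE[\bphi,\xi][\bphi,\xi]^T=\Theta$, $\EE[\varphi,\xi][\bphi,\xi]^T=\C(\varphi,\bphi)^T$, and $\EE[\varphi,\xi]^2=[\varphi,\C\varphi]$.

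First I would introduce the decorrelated random variable
\begin{equation*}
\eta_\varphi := [\varphi,\xi] - \C(\varphi,\bphi)\,\Theta^{-1}[\bphi,\xi],
\end{equation*}
and verify by direct computation that $\EE\bigl(\eta_\varphi \cdot [\bphi,\xi]^T\bigr) = \C(\varphi,\bphi) - \C(\varphi,\bphi)\Theta^{-1}\Theta = 0$. Since $(\eta_\varphi,[\bphi,\xi])$ is jointly Gaussian, this orthogonality upgrades to independence. Consequently, the conditional law of $[\varphi,\xi]$ given $[\bphi,\xi]=\by$ is
\begin{equation*}
\mN\!\Bigl(\C(\varphi,\bphi)\,\Theta^{-1}\by,\; [\varphi,\C\varphi]-\C(\varphi,\bphi)\Theta^{-1}\C(\bphi,\varphi)\Bigr).
\end{equation*}

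Next I would identify the stated formulas. The conditional mean equals $[\varphi, u^\dagger]$ with $u^\dagger := \by^T\Theta^{-1}\C\bphi \in \mU$, since $[\varphi,\C\bphi]=\C(\varphi,\bphi)$ by definition and the scalar pairing is linear. The conditional variance is exactly $[\varphi,\C_\bphi\varphi]$ as defined in the statement. Because these formulas hold for every $\varphi \in \mU^\ast$, and a Gaussian measure on $\mU$ is uniquely characterized by the laws of its linear functionals, the conditional law is the canonical Gaussian $\mN(u^\dagger,\C_\bphi)$, where $\C_\bphi$ is verified to be symmetric and positive by its defining bilinear form (polarization yields the cross terms).

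The main subtlety, which I would flag but not belabor, is justifying that the pointwise-in-$\varphi$ conditioning actually assembles into a conditional \emph{measure} on $\mU$ rather than merely a consistent family of one-dimensional Gaussian laws; this follows from Kolmogorov-type extension together with the linear isometry structure of the canonical GP, i.e., the conditional field $\varphi \mapsto [\varphi,\xi] - \C(\varphi,\bphi)\Theta^{-1}[\bphi,\xi]$ is itself a linear isometry of $\mU^\ast$ into a Gaussian space and hence defines a canonical GP with covariance $\C_\bphi$. The invertibility of $\Theta$, assumed in the statement, ensures the formulas are well defined; when $\phi_1,\ldots,\phi_N$ are linearly independent in $\mU^\ast$, positivity of $\C$ gives invertibility automatically, so no additional hypothesis is needed beyond what is stated.
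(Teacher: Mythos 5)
Your proof is correct, but there is nothing in the paper to compare it against: the paper states Proposition~\ref{prop:conditional-GP-with-linear-measurements} without proof, deferring to \cite[Cor.~17.12]{owhadi2019operator}. Your argument is the standard one for that result: form the decorrelated residual $\eta_\varphi = [\varphi,\xi]-\C(\varphi,\bphi)\Theta^{-1}[\bphi,\xi]$, use joint Gaussianity to upgrade zero correlation to independence, read off the conditional mean and variance of each one-dimensional marginal, and then observe that $\varphi\mapsto\eta_\varphi$ is a linear, centered Gaussian field whose covariance form is exactly $[\varphi,\C_\bphi\varphi']$, so that $\xi$ conditioned on $[\bphi,\xi]=\by$ is the Gaussian measure $\mN(u^\dagger,\C_\bphi)$; this also produces a genuine regular conditional distribution rather than just a consistent family of marginals, which is the subtlety you correctly flag. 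Two small remarks. First, the residual field is not a linear \emph{isometry} of $\mU^\ast$ in the sense used for the canonical GP: $\C_\bphi$ is degenerate (indeed $\eta_{\phi_i}=0$ for each $i$), so $\varphi\mapsto\eta_\varphi$ is a linear contraction whose associated quadratic form is only a seminorm; this does not affect the argument, since a centered linear Gaussian field with a positive semidefinite covariance form still determines a Gaussian measure, but the word ``isometry'' should be dropped or qualified. Second, your closing claim that linear independence of $\phi_1,\ldots,\phi_N$ plus positivity of $\C$ gives invertibility of $\Theta$ automatically is correct ($\bz^T\Theta\bz=[\sum_n z_n\phi_n,\C\sum_n z_n\phi_n]>0$ for $\bz\neq 0$), and is consistent with the paper's footnoted remark elsewhere that the invertibility assumption is made for convenience.
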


  Proposition~\ref{prop:conditional-GP-with-linear-measurements} gives a finite representation of the
  conditional mean of the GP constituting a representer theorem  \cite[Cor.~17.12]{owhadi2019operator}. Let us define the
  elements
  \begin{equation}
    \label{def:gamblets}
    \chi_i := \sum_{n=1}^N \Theta_{i,n}^{-1} \C \phi_n\, ,
  \end{equation}
  referred to as {\it gamblets} in the parlance of \cite{owhadi2017multigrid} which
  can equivalently  be characterized as
  the minimizers of the variational problem
  \begin{equation}\label{gamblet-optimization-definition}
    \left\{
      \begin{aligned}
        &\minimize_{\chi \in \mU} &&   \|  \chi \|  \\
        & \st  &&[ \phi_n, \chi ] = \delta_{i,n}, \qquad
        n = 1, \dots, N\, .
      \end{aligned}
      \right.
    \end{equation}
    This fact further enables the variational characterization of the conditional
    mean $u^\dagger$ directly in terms of the gamblets $\chi_n$.
    \begin{proposition}\label{prop:variational-conditional-mean-characterization-linear-case}
      Let $u^\dagger = \EE \left[ \xi | [\bphi,\xi] = \by \right]$ as in
      Proposition~\ref{prop:conditional-GP-with-linear-measurements}. Then $u^\dagger = \sum_{n=1}^N y_n \chi_n$
      is the unique minimizer of
      \begin{equation*}
    \left\{
      \begin{aligned}
        & \minimize_{u \in \mU} &&   \| u \| \\
        & \st && [ \phi_n, u ] = y_n, \qquad
        n = 1, \dots, N\, .
      \end{aligned}
      \right.
    \end{equation*}
  \end{proposition}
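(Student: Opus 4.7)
The plan is to combine Proposition~\ref{prop:conditional-GP-with-linear-measurements} with a standard Pythagorean/orthogonality argument in the Hilbert structure that $\|\cdot\|$ induces on $\mU$. First I would identify the candidate minimizer: the conditional mean formula $u^\dagger = \by^T \Theta^{-1} \C\bphi$ given in Proposition~\ref{prop:conditional-GP-with-linear-measurements} expands, via the definition \eqref{def:gamblets} of the gamblets, to $\sum_{n=1}^N y_n \chi_n$, so the two expressions for $u^\dagger$ agree. Feasibility, i.e. $[\phi_m, u^\dagger] = y_m$, then follows by a direct computation that uses only \eqref{Theta-matrix-def} and the identity $\Theta \Theta^{-1} = I$; equivalently, it reflects the interpolation property $[\phi_n, \chi_m] = \delta_{n,m}$ of the gamblets that one reads off from \eqref{gamblet-optimization-definition}.

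Next I would establish optimality by an orthogonality argument. Introduce the dual element $\psi := \sum_{i,n} y_i \Theta^{-1}_{i,n}\phi_n \in \mU^\ast$, so that $u^\dagger = \C\psi$, hence $\C^{-1} u^\dagger = \psi$. For any admissible $u \in \mU$ with $[\phi_n, u] = y_n$ for every $n$, set $v := u - u^\dagger$; by linearity of the constraints, $[\phi_n, v] = 0$ for all $n$. Then
\[
\langle u^\dagger, v\rangle \;=\; [\C^{-1} u^\dagger, v] \;=\; [\psi, v] \;=\; \sum_{i,n} y_i \Theta^{-1}_{i,n} [\phi_n, v] \;=\; 0.
\]
The Pythagorean identity $\|u\|^2 = \|u^\dagger\|^2 + \|v\|^2$ in the inner product $\langle\cdot,\cdot\rangle$ then gives $\|u\| \geq \|u^\dagger\|$, with equality if and only if $v = 0$. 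This simultaneously yields both that $u^\dagger$ minimizes and that it is the unique minimizer, since $\|\cdot\|$ is a genuine norm on $\mU$.

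I do not anticipate a genuine obstacle here: the statement is essentially the classical representer/interpolation picture transported to the abstract Banach setting of Subsection~\ref{sec:GMs-on-Banach-Spaces}. The only point requiring care is bookkeeping of the duality pairing $[\cdot,\cdot]$ and the inner product $\langle\cdot,\cdot\rangle = [\C^{-1}\cdot,\cdot]$, in particular keeping track of the fact that $u^\dagger$ lies in the image of $\C$ applied to $\mathrm{span}\{\phi_1, \ldots, \phi_N\}$, which is precisely what makes the orthogonality computation above collapse to the constraints $[\phi_n, v] = 0$.
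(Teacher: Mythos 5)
Your argument is correct: the identification $u^\dagger=\by^T\Theta^{-1}\C\bphi=\sum_n y_n\chi_n$, the feasibility check via $\Theta\Theta^{-1}=I$ (using the symmetry of $\C$ so that $[\phi_m,\C\phi_n]=\Theta_{m,n}=\Theta_{n,m}$), and the orthogonality/Pythagoras step with $\C^{-1}u^\dagger=\psi\in\mathrm{span}\{\phi_1,\dots,\phi_N\}$ together give both optimality and uniqueness. The paper itself supplies no proof of this proposition, deferring instead to the cited representer theorem \cite[Cor.~17.12]{owhadi2019operator}; your write-up is the standard argument behind that result and is consistent with the framework of Subsection~\ref{sec:GMs-on-Banach-Spaces}, so there is nothing to add.
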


  Proposition~\ref{prop:variational-conditional-mean-characterization-linear-case} is the cornerstone
  of our methodology for solution of nonlinear PDEs. It is also useful for the
  solution of IPs. For this purpose consider
   nonlinear functions $G: \R^N \to \R^I $ and  $F: \R^N \to \R^{M}$  and  vectors $\bo \in \R^I$ and  $\by \in \R^M$ and consider the
  optimization  problem:
  \begin{equation}\label{generic-nonlinear-constrained-optimization-problem}
    \left\{
        \begin{aligned}
          & \minimize_{u \in \mU} && \|u\|^2 +  \frac{1}{ \gamma^2}  | G([\bphi, u]) - \bo |^2  \\
        & \st &&  F( [ \bphi, u ]) = \by\, ,
      \end{aligned}
      \right.
    \end{equation}
    where  $\gamma  \in \R$ is a parameter. We will use this formulation of IPs in PDEs,
    with $u$ concatenating the solution of the forward PDE problem and the unknown parameter; the
    nonlinear constraint on $F$ enforces the forward PDE and $G$ the observed noisy data.
    Then a representer theorem still holds for a minimizer of this problem
    stating that the solution has a finite expansion in terms of the gamblets $\chi_n$:

    \begin{proposition}\label{prop:nonlinear-representer-optimization}
      Suppose $(\bo, \by) \in \R^I \times  \R^M$ are fixed and $\Theta$ is invertible\footnote{Relaxing the interpolation constraints renders the invertibility assumption on $\Theta$ unnecessary.  Nevertheless we keep it for ease of presentation.}.
      Then $u^\dagger \in \mU$ is a minimizer of
      \eqref{generic-nonlinear-constrained-optimization-problem} if and only if
      $u^\dagger = \sum_{n=1}^N z^\dagger_n \chi_n$ and the vector
      $\bz^\dagger$ is a
      minimizer of
      \begin{equation}\label{nonlinear-representer-optimization}
         \left\{
        \begin{aligned}
          & \minimize_{\bz \in \R^N} &&  \bz^T \Theta^{-1} \bz
          + \frac{1}{ \gamma^2}  | G(\bz) - \bo |^2 \\
        & \st &&  F(\bz) = \by\, .
      \end{aligned}
      \right.
      \end{equation}
    \end{proposition}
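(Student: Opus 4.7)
The plan is to reduce the infinite-dimensional problem \eqref{generic-nonlinear-constrained-optimization-problem} to the finite-dimensional problem \eqref{nonlinear-representer-optimization} by a two-level (outer/inner) decomposition that exploits the fact that the objective and constraint depend on $u \in \mU$ only through $\|u\|$ and the finite-dimensional vector $[\bphi, u] \in \R^N$. Concretely, I would introduce the auxiliary variable $\bz := [\bphi, u]$ and rewrite \eqref{generic-nonlinear-constrained-optimization-problem} as a nested optimization:
\begin{equation*}
\minimize_{\bz \in \R^N,\, F(\bz) = \by} \:
\Bigl\{ \min_{u \in \mU,\, [\bphi,u] = \bz} \|u\|^2 \Bigr\}
+ \frac{1}{\gamma^2} |G(\bz) - \bo|^2 .
\end{equation*}
This split is valid because, for each fixed $\bz$, the terms $F(\bz)$ and $G(\bz)$ are constants of the inner problem, leaving only a linearly constrained quadratic minimization of $\|u\|^2$.

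Next, I would apply Proposition~\ref{prop:variational-conditional-mean-characterization-linear-case} together with \eqref{def:gamblets} (and the standing invertibility of $\Theta$) to the inner problem. This identifies its unique minimizer as $u^\dagger(\bz) = \sum_{n=1}^N z_n \chi_n$, and a direct computation using \eqref{def:gamblets} and \eqref{Theta-matrix-def} yields $\|u^\dagger(\bz)\|^2 = \bz^T \Theta^{-1} \bz$. The gamblet interpolation property $[\phi_n, \chi_i] = \delta_{i,n}$ (built into \eqref{gamblet-optimization-definition}) ensures that $[\bphi, u^\dagger(\bz)] = \bz$ for every $\bz \in \R^N$, so the inner feasible set is nonempty for any $\bz$. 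Substituting this optimal value into the outer layer produces exactly \eqref{nonlinear-representer-optimization}.

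It then remains to transfer minimizers in both directions. If $u^\dagger$ solves \eqref{generic-nonlinear-constrained-optimization-problem}, set $\bz^\dagger := [\bphi, u^\dagger]$: then $F(\bz^\dagger) = \by$, and the uniqueness clause of Proposition~\ref{prop:variational-conditional-mean-characterization-linear-case} forces $u^\dagger = \sum_n z^\dagger_n \chi_n$ (otherwise replacing $u^\dagger$ by $\sum_n z^\dagger_n \chi_n$ would strictly decrease $\|u\|^2$ at fixed $\bz^\dagger$, contradicting optimality), from which $\bz^\dagger$ must minimize \eqref{nonlinear-representer-optimization}. Conversely, a minimizer $\bz^\dagger$ of \eqref{nonlinear-representer-optimization} gives a feasible $u^\dagger := \sum_n z^\dagger_n \chi_n$ whose objective value equals the infimum computed via the nested reformulation, so $u^\dagger$ minimizes \eqref{generic-nonlinear-constrained-optimization-problem}.

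The main (though modest) obstacle is the bookkeeping in this last equivalence step: one must verify that the feasibility sets match under the identification $u \leftrightarrow \bz$, and that the ``$u = \sum_n z_n \chi_n$'' representation is \emph{forced} rather than merely allowed. Both reduce to the gamblet interpolation identity and the strict convexity of $\|\cdot\|^2$ on the affine slice $\{u : [\bphi, u] = \bz\}$, which makes the inner minimizer unique. No nontrivial analytic input beyond Proposition~\ref{prop:variational-conditional-mean-characterization-linear-case} and the definitions in \eqref{Theta-matrix-def}--\eqref{gamblet-optimization-definition} is required.
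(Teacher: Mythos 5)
Your proposal is correct and follows essentially the same route as the paper: the paper's proof is exactly the two-level decomposition in your first display, followed by solving the inner linearly-constrained problem via the linear representer theorem (the paper cites Proposition~\ref{prop:conditional-GP-with-linear-measurements}, whose variational form is Proposition~\ref{prop:variational-conditional-mean-characterization-linear-case}) to obtain $\|u\|^2 = \bz^T\Theta^{-1}\bz$. You simply spell out the minimizer-transfer and uniqueness bookkeeping that the paper leaves implicit.
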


    \begin{proof}
    The proof is nearly identical to the derivation of \eqref{eqn: finite dim optimization for elliptic eqn}  presented in  Section~\ref{sec:intro:representertheorem}. Simply observe that minimizing \eqref{generic-nonlinear-constrained-optimization-problem} is equivalent to minimizing
      \begin{equation}\label{jeduyye3}
    \minimize_{\bz \in \R^N\,:\, F( \bz) = \by} \left\{
        \begin{aligned}
          & \minimize_{u \in \mU} && \|u\|^2 +  \frac{1}{ \gamma^2}  | G(\bz) - \bo |^2   \\
        & \st &&  [ \bphi, u ] = \bz\, .
      \end{aligned}
      \right.
    \end{equation}
    Then solve the inner optimization problem for a fixed $\bz$ and
    apply Proposition~\ref{prop:conditional-GP-with-linear-measurements}.
    \end{proof}

We note that
this model assumes independent and identically distributed (i.i.d.) observation
    noise for the vector $\bo$ and can easily be extended to correlated observation noise by replacing
    the  misfit term $\frac{1}{\gamma^2} | G(\bz) - \bo |^2$ in  \eqref{generic-nonlinear-constrained-optimization-problem} with an appropriately weighted misfit term of the form $| \Sigma^{-1/2} (G(\bz) - \bo)|^2$ where $\Sigma$ denotes the
    covariance matrix of the observation noise.

 \begin{remark}\label{rem:Bayesian-interpretation}
 It is intuitive that a minimizer of the optimization problem we introduce
 and solve in this paper corresponds to a MAP point
 for the GP $\xi \sim \mN(0, \C)$ conditioned on PDE constraints at the collocation points.
 To prove this will require extension of
the approach introduced in \cite{dashti2013map}, for example,
and is left for future work. \bh{Here we describe this connection
informally in the absence of the equality constraints.
Consider the the prior measure $\mu_0 = \mN(0, \C)$ and consider the measurements
    $\bo = G([ \bphi, u]) + \eta,  \by = F([\bphi, u]) + \eta',
    \: \eta \sim \mN(0, \gamma^2 I), \eta' \sim \mN(0, \beta^2 I)$.
It then follows from Bayes' rule \cite{stuart2010inverse} that
the posterior measure of $u$ given the data $(\bo, \by$) is identified as the measure
\begin{equation*}
\begin{aligned}
     \frac{\dd \mu^{(\bo, \by)}}{\dd \mu_0}(u) & =
     \frac{1}{Z^{(\bo, \bu)}} \exp\left( -\frac{1}{2 \gamma^2} | G([\bphi, u])
    - \bo |^2 - \frac{1}{2 \beta^2} | F([\bphi, u]) - \by |^2 \right), \\
    Z^{(\bo, \by)} & :=\EE_{u \sim \mu_0} \exp\left( -\frac{1}{2 \gamma^2} | G([\bphi, u])
    - \bo |^2 - \frac{1}{2 \beta^2} | F([\bphi, u]) - \by |^2 \right).
\end{aligned}
\end{equation*}
The article \cite{dashti2013map} showed that the MAP estimators of $\mu^{(\bo, \by)}$
are solutions to
\begin{equation*}
        \begin{aligned}
          & \minimize_{u \in \mU} && \|u\|^2 +  \frac{1}{ \gamma^2}  | G([\bphi, u]) - \bo |^2   + \frac{1}{\beta^2} | F([\bphi, u]) - \by |^2.
      \end{aligned}
    \end{equation*}
Letting $\beta \to 0$ then yields \eqref{generic-nonlinear-constrained-optimization-problem}.
}

  \end{remark}

\section{Solving Nonlinear PDEs}\label{sec:PDEs}
In this section, we present our framework for solution of nonlinear PDEs by conditioning GPs on
nonlinear constraints.
In Subsection~\ref{sec:nonlinear-PDE-GP} we outline our abstract setting as well as our
assumptions on PDEs of interest; \as{this leads to} Corollary~\ref{cor:PDE-representer-theorem}
 which states an analogue of Proposition~\ref{prop:nonlinear-representer-optimization}
 in the PDE setting. We analyze the convergence of our method in Subsection~\ref{sec:Conv-theory}
 and discuss two strategies for dealing with the nonlinear PDE constraints in Subsection~\ref{sec:deadling-with-constraints}.
 Next, we present the details pertaining
 to numerical implementations of our method, including the choice of \bh{kernels and a \bh{Gauss--Newton} algorithm in Subsection~\ref{sec:implementation}.} Finally, we present a
 set of numerical experiments in Subsection~\ref{sec:numerics} that demonstrate the effectiveness of our
 method in the context of prototypical nonlinear PDEs.

\subsection{Problem Setup}\label{sec:nonlinear-PDE-GP}
Let us consider \bh{a bounded domain} $\Omega \subseteq \R^d$ for $d \ge 1$ and
 a nonlinear PDE of the form
\begin{equation}
  \label{prototypical-PDE}
  \left\{
  \begin{aligned}
    \mP(u^\star) (\bx) = f(\bx)
    ,& \qquad \forall \bx \in \Omega\, , \\
  \mB(u^\star) (\bx) = g(\bx),& \qquad \forall \bx \in \partial \Omega\, .
  \end{aligned}
  \right.
\end{equation}
Here $\mP$ is a nonlinear differential operator and $\mB$ is an appropriate boundary operator
with data $f,  g$.
Throughout this section and for brevity, we assume that the PDE at hand is well-defined pointwise and has a
unique strong solution; extension of our methodology to weak solutions is
 left as a future research direction. We then consider $\mU$ to be an appropriate
quadratic Banach space for the solution $u^\star$ such as a Sobolev space $H^s (\Omega)$
with sufficiently large regularity index $s >0 $.

We propose to solve the PDE \eqref{prototypical-PDE} by approximating $u^\star$ by a GP conditioned on satisfying
the PDE at a finite set of collocation points in $\overline{\Omega}$ and proceed to
approximate the solution by computing the MAP point of such a conditioned GP. More precisely, let $\{ \bx_i\}_{i=1}^M$
be a collection of points in $\overline{\Omega}$ ordered in such a way that
$\bx_1, \dots \bx_{\Md} \in \Omega$ are in the  interior  of $\Omega$
while $\bx_{\Md + 1}, \dots, \bx_{M} \in \partial \Omega$ are on the boundary.
Now let $\mU$ be a  quadratic Banach space with associated covariance operator
$\C: \mU^\ast \to \mU$
and consider the optimization problem:
\begin{equation}\label{PDE-optimization-form-initial}
    \left\{
      \begin{aligned}
        &\minimize_{u \in \mU} &&   \| u\|   \\
        & \st  &&  \mP(u)(\bx_m) = f(\bx_m),  &&\text{for } m = 1, \dots, M\, ,\\
        &  &&  \mB(u)(\bx_m) = g(\bx_m),  &&\text{for } m = \Md+1, \dots, M\, .\\
      \end{aligned}
      \right.
\end{equation}
In other words, we wish to approximate $u^\star$ with
\as{the minimum norm} element of the \bh{Cameron--Martin}
space of $\mN(0, \C)$ that satisfies the PDE and boundary data at the collocation points $\{\bx_i\}_{i=1}^M$.
\bh{In what follows we write $\mL(\mU; \mH)$ to denote the space of bounded and linear operators from $\mU$ to
another Banach space $\mH$.}
We  make the following assumption regarding the operators $\mP, \mB$:

\begin{assumption}\label{PDE-assumption}
There exist bounded and linear operators
$L_1, \dots, L_{Q} \in \mL( \mU; C( \Omega))$ in which $L_{1}, \dots, L_{Q_b} \in \mL( \mU; C( \partial \Omega))$ for some $1\leq Q_b \leq Q$, and \bh{there are
 maps $P: \R^{Q} \to \R$ and $B: \R^{Q_b} \to \R$, which may be nonlinear,} so
that $\mP(u)(\bx) $ and $\mB(u)(\bx)$  can be written as
 \begin{equation}
   \label{assumptions-on-mP-mB-PDE}
   \begin{aligned}
     \mP(u)(\bx) &= P \big( L_1(u)(\bx), \dots, L_{Q}(u)(\bx) \big), && \forall \bx \in \Omega\, , \\
     \mB(u)(\bx) &= B \big( L_{1 }(u)(\bx), \dots, L_{Q_b}(u)(\bx) \big), && \forall \bx \in \partial \Omega\, .
 \end{aligned}
\end{equation}
\end{assumption}

For prototypical nonlinear PDEs the   $L_q$ for $1\leq q \leq Q$  are linear differential operators such as first or second order
derivatives while the maps $P$ and $B$ are often simple algebraic nonlinearities.
Furthermore, observe that for ease of presentation
we are assuming
fewer linear operators are
used to define the boundary conditions than the operators that define the PDE in the interior.

\begin{runningexample}{NE}[Nonlinear Elliptic PDE]
  Recall the nonlinear elliptic PDE \eqref{elliptic-proto-PDE}
  and consider the linear operators and nonlinear maps
  \begin{equation*}
    L_1: u \mapsto u, \quad L_2: u \mapsto \Delta u, \quad  P(v_1, v_2) = - v_2 + \tau(v_1), \quad B(v_1) = v_1\, ,
    \end{equation*}
    where we took $Q = 2$ and $Q_b = 1$.
  Then this equation readily satisfies Assumption~\ref{PDE-assumption} whenever
  the solution is sufficiently regular so that $L_2(u)$ is well-defined pointwise within $\Omega$.
\end{runningexample}

Under Assumption~\ref{PDE-assumption} we can then
define the functionals $ \phi_{m}^{(q)} \in \mU^\ast$ by setting
\begin{equation}\label{phi-definition}
  \phi_{m}^{(q)} := \updelta_{\bx_m} \circ L_q, \quad
  \text{where} \quad
  \left\{
  \begin{aligned}
     &1 \le m \le M,  &&\text{if} \quad 1 \le q \leq Q_b, \\
     &1 \le m \le M_\Omega,  &&\text{if} \quad Q_{b+1} \le q \leq Q.
  \end{aligned}
  \right.
\end{equation}
We further use the shorthand notation $\bphi^{(q)}$  to denote the
vector of dual elements $\phi^{(q)}_m$ for a fixed index $q$.
Observe that $ \bphi^{(q)} \in ( \mU^\ast)^{\otimes M}$ if $q \leq Q_b$
while $ \bphi^{(q)} \in ( \mU^\ast)^{\otimes M_{\Omega}}$
if $q > Q_b$.
We further write
\begin{equation}
\label{eq:useful_to_highlight}
N= MQ_b+M_{\Omega}(Q-Q_b)
\end{equation}
and define
\begin{equation}\label{phi-PDE-def}
    \bphi := (  \bphi^{(1)}, \dots, \bphi^{(Q)} ) \in ( \mU^\ast)^{\otimes N}\, .
\end{equation}
To this end, we  define the measurement
vector $\by \in \R^M$ by setting
\begin{equation}\label{y-definition}
  y_m  = \left\{
    \begin{aligned}
      & f(\bx_m), && \text{if } m \in \{1, \dots, \Md\}\, , \\
      & g(\bx_m), && \text{if } m \in \{ \Md+1, \dots, M\}\, ,
    \end{aligned}
    \right.
  \end{equation}
  as well as the
nonlinear map
\begin{equation}\label{F-map-PDE-definition}
  \big( F([\bphi, u]) \big)_m :=  \left\{
    \begin{aligned}
      &P( [ \phi_{m}^{(1)}, u], \dots, [\phi_{m}^{(Q)}, u] )  &&\text{if } m \in \{1, \dots, \Md\}, \\
      &B( [\phi_{m}^{(1)}, u], \dots, [\phi_{m}^{(Q_b)}, u])  &&\text{if } m \in \{ \Md + 1, \dots, M\}.
    \end{aligned}
    \right.
  \end{equation}
We can now rewrite the optimization problem \eqref{PDE-optimization-form-initial}
in the same form as \eqref{generic-nonlinear-constrained-optimization-problem}:
\begin{equation}\label{PDE-optimization-form-standard}
    \left\{
      \begin{aligned}
        & \minimize_{u \in \mU} && \| u \|   \\
        & \st && F([ \bphi, u ]) = \by.
      \end{aligned}
      \right.
    \end{equation}
    Then  a direct appliction of Proposition~\ref{prop:nonlinear-representer-optimization}
    yields the
 following corollary.
 \begin{corollary}\label{cor:PDE-representer-theorem}
   Suppose Assumption~\ref{PDE-assumption} holds, $\C$ and $\Theta$ are invertible, and
   define $\bphi, F, \by$ as above. Then
   $u^\dagger$ is a minimizer of \eqref{PDE-optimization-form-initial} if and only if
   $u^\dagger = \sum_{n=1}^N z^\dagger_n \chi_n$ where the $\chi_n$ are the gamblets defined according to
   \eqref{gamblet-optimization-definition}
   and $\bz^\dagger$ is a
   minimizer of
   \begin{equation}\label{PDE-nonlinear-representer-optimization}
         \left\{
        \begin{aligned}
        & \minimize_{\bz \in \R^N} &&   \bz^T \Theta^{-1} \bz \\
        & \st &&  F  (\bz) = \by.
      \end{aligned}
      \right.
      \end{equation}
    \end{corollary}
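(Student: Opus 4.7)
The plan is to reduce the statement to Proposition~\ref{prop:nonlinear-representer-optimization} by recasting \eqref{PDE-optimization-form-initial} into the standard form \eqref{generic-nonlinear-constrained-optimization-problem}. First I would verify that the constraint set of \eqref{PDE-optimization-form-initial} equals the constraint set of \eqref{PDE-optimization-form-standard}. By the definition \eqref{phi-definition}, the functionals satisfy $[\phi_m^{(q)}, u] = L_q(u)(\bx_m)$, and the boundedness of each $L_q$ in Assumption~\ref{PDE-assumption} guarantees $\phi_m^{(q)} \in \mU^\ast$ (here we use that $\updelta_{\bx_m}$ is a bounded functional on $C(\overline\Omega)$). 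Substituting into the decomposition \eqref{assumptions-on-mP-mB-PDE} gives
\begin{equation*}
\mP(u)(\bx_m) = P\bigl([\phi_m^{(1)}, u], \dots, [\phi_m^{(Q)}, u]\bigr), \qquad \mB(u)(\bx_m) = B\bigl([\phi_m^{(1)}, u], \dots, [\phi_m^{(Q_b)}, u]\bigr),
\end{equation*}
which match \eqref{F-map-PDE-definition} componentwise. So the PDE/boundary constraints on $u$ become exactly $F([\bphi, u]) = \by$ with $\by$ given by \eqref{y-definition}, confirming \eqref{PDE-optimization-form-initial} and \eqref{PDE-optimization-form-standard} are identical problems.

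Next I would observe that \eqref{PDE-optimization-form-standard} is the special case of \eqref{generic-nonlinear-constrained-optimization-problem} obtained by taking $G \equiv 0$ and $\bo = 0$, which eliminates the misfit term; minimizing $\|u\|$ and $\|u\|^2$ yield the same set of minimizers since $\|\cdot\|\geq 0$. Applying Proposition~\ref{prop:nonlinear-representer-optimization} with this choice then yields that $u^\dagger$ solves \eqref{PDE-optimization-form-initial} if and only if $u^\dagger = \sum_{n=1}^N z_n^\dagger \chi_n$, with $\bz^\dagger$ a minimizer of $\bz^T \Theta^{-1} \bz$ subject to $F(\bz) = \by$; this is precisely \eqref{PDE-nonlinear-representer-optimization}.

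If instead one prefers a self-contained derivation, the same two-level argument from Proposition~\ref{prop:nonlinear-representer-optimization} can be repeated in place: split the optimization as an outer minimization over $\bz \in \R^N$ with $F(\bz) = \by$ and an inner minimization of $\|u\|$ over $\{u \in \mU : [\bphi, u] = \bz\}$. The inner problem is the linear-constraint representer problem of Proposition~\ref{prop:variational-conditional-mean-characterization-linear-case}, whose unique minimizer is $u = \sum_n z_n \chi_n$ with value $\|u\|^2 = \bz^T \Theta^{-1} \bz$ (using invertibility of $\Theta$ and the gamblet characterization \eqref{def:gamblets}--\eqref{gamblet-optimization-definition}). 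Substituting this value into the outer minimization gives \eqref{PDE-nonlinear-representer-optimization}.

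The main obstacle is not genuinely deep; it is essentially notational bookkeeping to align the heterogeneous indexing of $\bphi$ in \eqref{phi-PDE-def}--\eqref{eq:useful_to_highlight} (where $\bphi^{(q)}$ has $M$ entries for $q \leq Q_b$ but only $M_\Omega$ entries for $q > Q_b$) with the flat indexing $\chi_1, \dots, \chi_N$ used in Proposition~\ref{prop:nonlinear-representer-optimization}. Once this indexing convention is fixed, the dimensions match and the reduction is immediate; existence of a minimizer can be established exactly as in the theorem of Subsection~\ref{sec:intro:representertheorem} by using $u^\star$ to certify a compact non-empty sublevel set.
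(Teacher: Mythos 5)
Your proposal is correct and follows exactly the route the paper takes: the paper states that Corollary~\ref{cor:PDE-representer-theorem} is a direct application of Proposition~\ref{prop:nonlinear-representer-optimization} after rewriting \eqref{PDE-optimization-form-initial} as \eqref{PDE-optimization-form-standard}, which is precisely your reduction with $G\equiv 0$ and $\bo = 0$. The additional bookkeeping you note (matching the heterogeneous indexing of $\bphi$ and the equivalence of minimizing $\|u\|$ versus $\|u\|^2$) is handled implicitly in the paper but is accurately identified and resolved in your argument.
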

 The above corollary is the foundation of our numerical algorithms for approximation of
 the solution $u^\dagger$, as $\Theta^{-1}$  and the gamblets $\chi_n$ can be  approximated offline
 while the coefficients $z^\dagger_n$ can be computed by solving the optimization problem
 \eqref{PDE-nonlinear-representer-optimization}.

To solve \eqref{PDE-nonlinear-representer-optimization} numerically, we will present two different approaches that transform it to an unconstrained optimization problem. Before moving to that in Subsection \ref{sec:deadling-with-constraints}, we discuss the convergence theory first in the next section.

\subsection{Convergence Theory}\label{sec:Conv-theory}
We state and prove a more general version of Theorem~\ref{thm:intro:convergence}
for our abstract setting of PDEs on Banach spaces with quadratic norms stating that a minimizer $u^\dagger$
of \eqref{PDE-optimization-form-initial} converges to the true solution $u^\star$ under sufficient
regularity assumptions and for appropriate choices of the operator $\mK$.

\begin{theorem}\label{thm:convergence}
  Consider the PDE \eqref{prototypical-PDE} and
  suppose that $\mU \subset \mH \subset C^{t}(\Omega)\cap C^{t'}(\overline{\Omega})$ where $\mH$ is
  a  Banach space such  that the
  first inclusion from the left is given by a compact embedding  and $t\ge t' \ge 0$ are
  sufficiently large so that \as{all derivatives
  appearing in the PDE are} defined pointwise for elements of $C^{t}(\Omega)\cap C^{t'}(\overline{\Omega})$.
  Furthermore
  assume that the PDE  has a unique classical solution $u^\star \in \mU$ and that, as $M\rightarrow \infty$,
  \begin{equation*}
  \sup_{\bx\in \Omega} \: \min_{1\leq m \leq M_\Omega}|\bx-\bx_m|\rightarrow 0
  \quad  \text{and} \quad
  \sup_{\bx\in \partial \Omega} \: \min_{M_\Omega + 1 \le m \le M}|\bx-\bx_m|\rightarrow 0.
  \end{equation*}
  Write $u^\dagger_M$ for a minimizer of \eqref{PDE-optimization-form-initial}
  with $M$ distinct collocation points. Then, as $M \rightarrow \infty$, the sequence
  of minimizers $u^\dagger_M$ converges towards $u^\star$ pointwise in $\Omega$ and in $\mH$.
\end{theorem}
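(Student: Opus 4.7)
The plan is to adapt the argument used for Theorem~\ref{thm:intro:convergence}, replacing the concrete compact Sobolev embedding $H^s(\Omega) \hookrightarrow H^t(\Omega)$ by the abstract compact embedding $\mU \hookrightarrow \mH$ built into the hypothesis. The proof decomposes naturally into three steps: a uniform norm bound on the minimizers, extraction of a subsequential limit via compactness, and identification of that limit with $u^\star$ by invoking classical well-posedness.

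First I would observe that since $u^\star \in \mU$ solves \eqref{prototypical-PDE} classically, it satisfies the pointwise interior and boundary constraints of \eqref{PDE-optimization-form-initial} for every $M$ and every choice of collocation points. As $u^\dagger_M$ minimizes $\|\cdot\|$ over the corresponding feasible set, this delivers the uniform bound $\|u^\dagger_M\| \leq \|u^\star\|$. Next, compactness of $\mU \hookrightarrow \mH$ produces, from any subsequence, a further subsequence $\{u^\dagger_{M_p}\}$ converging in $\mH$ to some $u^\dagger_\infty \in \mH$. The continuous inclusion $\mH \hookrightarrow C^t(\Omega)\cap C^{t'}(\overline{\Omega})$ upgrades this to convergence in the space where all derivatives entering the PDE are defined pointwise; in particular, by Assumption~\ref{PDE-assumption}, each $L_q$ is continuous from $C^t(\Omega)\cap C^{t'}(\overline{\Omega})$ into $C(\Omega)$ (respectively $C(\partial\Omega)$ for $q \leq Q_b$), so $L_q(u^\dagger_{M_p}) \to L_q(u^\dagger_\infty)$ uniformly.

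The third step is to show that $u^\dagger_\infty$ satisfies the PDE. Writing $v := \mP(u^\dagger_\infty) - f$ and $v_p := \mP(u^\dagger_{M_p}) - f$, continuity of $P$ in Assumption~\ref{PDE-assumption} together with the uniform convergence of the $L_q(u^\dagger_{M_p})$ yields $v_p \to v$ in $C(\Omega)$ with $v$ continuous on $\overline{\Omega}$. For any $\bx \in \Omega$ and each $p$, let $\bx_{m(p)}$ be the nearest interior collocation point; the triangle inequality gives
\begin{equation*}
|v(\bx)| \leq |v(\bx) - v(\bx_{m(p)})| + \|v - v_p\|_{C(\Omega)} + |v_p(\bx_{m(p)})|.
\end{equation*}
The last term vanishes by feasibility of $u^\dagger_{M_p}$, the middle term by the $C(\Omega)$-convergence established above, and the first by uniform continuity of $v$ on $\overline{\Omega}$ combined with the assumption that the interior fill-distance tends to zero. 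Hence $v \equiv 0$ on $\Omega$; the analogous argument on $\partial\Omega$, using $B$ and the boundary fill-distance assumption, yields $\mB(u^\dagger_\infty) = g$ on $\partial\Omega$. Uniqueness of the classical solution then forces $u^\dagger_\infty = u^\star$. Since every subsequence of $\{u^\dagger_M\}$ admits a further subsequence converging to this common limit in $\mH$, the full sequence converges to $u^\star$ in $\mH$, which in turn implies pointwise convergence on $\Omega$ via the embedding into $C^t(\Omega)\cap C^{t'}(\overline{\Omega})$.

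The main technical obstacle I expect is the passage of uniform convergence through the nonlinear composition $\mP(u) = P(L_1 u, \dots, L_Q u)$ in the third step: this requires at least continuity of the algebraic map $P$ (and likewise $B$) on the range of the $L_q$ applied to a norm-bounded set in $\mU$. Although this is natural and holds for the prototypical PDEs considered in the paper, it is not formally asserted in Assumption~\ref{PDE-assumption} and may need to be added as a standing hypothesis, or a local Lipschitz version invoked via the uniform $\mU$-bound from step one.
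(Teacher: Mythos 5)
Your proposal follows essentially the same route as the paper's own proof: a uniform norm bound from minimality, subsequential compactness in $\mH$, identification of the limit via the fill-distance and uniform continuity argument, and the subsequence-independence trick to upgrade to full convergence. Your closing observation that continuity of the algebraic maps $P$ and $B$ is needed (and is used implicitly rather than stated in Assumption~\ref{PDE-assumption}) is accurate --- the paper's proof likewise asserts $\mP u^\dagger_{M_p} \to \mP u^\dagger_\infty$ in $C(\Omega)$ without spelling this out --- but this does not change the fact that the two arguments coincide.
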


\begin{proof}
  \yc{The method of proof is  similar to that of Theorem~\ref{thm:intro:convergence}. Indeed, by the same argument as in the first paragraph of the proof for Theorem~\ref{thm:intro:convergence}, there exists a subsequence $u_{M_p}$ that converges to $u_{\infty}^\dagger$ in $\mH$. This also implies convergence in $C^t(\Omega)$ and $C^{t'}( \overline{\Omega})$ \as{due to the assumed continuous embedding of
  $\mH$ into $C^{t}(\Omega)\cap C^{t'}(\overline{\Omega})$.} Since $t\ge t' \ge 0$ are
  sufficiently large so that all derivatives appearing in the PDE are defined pointwise for elements of $C^{t}(\Omega)\cap C^{t'}(\partial \Omega)$, we get that $\mP u_{M_p}$ converges to $\mP u^\dagger_{\infty}$ in $C(\Omega)$ and $\mP u^\dagger_{\infty} \in C(\Omega)$.
  As $\Omega$ is compact, $u^\dagger_{\infty}$ is also uniformly continuous in $\Omega$.

  For any $\bx \in \Omega$ and $p\geq 1$, the triangle inequality
  shows that
  \begin{equation}
      \begin{aligned}
         |\mP(u^\dagger_{\infty})(\bx)-f(\bx)| &\leq \min_{1\leq m \leq M_{p,\Omega}} \left(|\mP(u^\dagger_{\infty})(\bx) - \mP(u^\dagger)(\bx_m)| + |\mP(u^\dagger)(\bx_m)-\mP(u_{M_p})(\bx_m)| \right)\\
         & \leq \min_{1\leq m \leq M_{p,\Omega}} |\mP(u^\dagger_{\infty})(\bx) - \mP(u^\dagger_{\infty})(\bx_m)| + \|\mP u^\dagger_{\infty}-\mP u_{M_p}\|_{C(\Omega)}\, ,
      \end{aligned}
  \end{equation}
  where in the first inequality we have used the fact that $\mP(u_{M_p})(\bx_m) = f(\bx_m)$. Here $M_{p,\Omega}$ is the number of interior points associated with the total $M_p$ collocation points. Taking $p \to \infty$ and using the uniform continuity of $\mP u^\dagger_{\infty}$ and the $C(\Omega)$ convergence from $\mP u_{M_p}$ to $\mP u^\dagger_{\infty}$, we derive that $\mP(u^\dagger_{\infty})(\bx)=f(\bx)$. In a similar manner we can derive $\mB(u^\dagger_{\infty})(\bx) = g(\bx)$. Thus, the limit $u^\dagger_{\infty}$ is a classical solution to the PDE. By the uniqueness of the solution we must have $u^{\dagger}_{\infty}=u^\star$. Finally, as the limit $u^\dagger_\infty$ is independent of the
   choice of the subsequence, the whole sequence $u^\dagger_M$ must converge to $u^\star$ pointwise and in $\mH$.}
\end{proof}

We note that while this theorem does not provide a rate for convergence of $u^\dagger$ towards $u^\star$ it relies
on straightforward conditions that are readily verifiable for prototypical PDEs. Typically we
choose $t, t'> 0$ large enough so that  the PDE operators $\mP, \mB$ are pointwise defined for the elements of $C^{t}(\Omega)\cap C^{t'}(\overline{\Omega})$ (e.g., $t>$ order of PDE $+d/2$)
and take the space $\mH$ to be a Sobolev-type space of appropriate regularity for the inclusion $\mH \subset C^{t}(\Omega)\cap C^{t'}(\partial\Omega)$ to hold; also see the conditions of Theorem~\ref{thm:intro:convergence} and the subsequent discussion. The compact embedding $\mU \subset \mH$ can then be
ensured by an appropriate choice of the covariance operator $\mK$ (or the associated kernel $K$). However, this choice
should result in a sufficiently large space $\mU$ that includes the solution $u^\star$ of the PDE.
Our conditions on the collocation points $\{ \bx_m  \}_{m=1}^M$ simply ensure that these points form a dense subset of
$\overline{\Omega}$  as $M \to \infty$.

\subsection{Dealing with the Constraints}\label{sec:deadling-with-constraints}
Now, we turn our attention to the equality constraints in \eqref{PDE-nonlinear-representer-optimization}
and present two strategies for elimination or relaxation of these constraints; these transform the optimization problem to an unconstrained one. They are crucial
preliminary steps before introducing our numerical framework.

 \subsubsection{Eliminating the Equality Constraints}\label{sec:eliminating-constraints}

 The equality constraints in \eqref{PDE-nonlinear-representer-optimization} can be
  eliminated under slightly stronger assumptions on the maps $P, B$. In particular, suppose
  that the following assumption holds:
  \begin{assumption}
  \label{a:added}
  The equations
 \begin{equation*}
   P(v_1, \dots, v_{Q}) = y, \qquad B(v_{1}, \dots, v_{Q_b}) = y\, ,
 \end{equation*}
 \bh{can be solved as \as{finite-dimensional algebraic equations}, i.e., there exist  $\hP: \R^{Q-1} \to \R$ and $\hB:\R^{Q_b -1} \to \R$ so
 that
 \begin{equation}\label{nonlinearity-solution-maps}
   v_j = \hP( v_1, \dots, v_{j-1}, v_{j+1},  \dots,  v_{Q}, y), \qquad v_{k} = \hB(v_1, \dots, v_{k-1}, v_{k+1}, \dots,  v_{Q_b}, y)\, ,
 \end{equation}
  for selected indices $j \in \{1, \dots, Q\}$ and $k \in \{1, \dots, Q_b\}$.}
\as{Then for integer $N$ defined by \eqref{eq:useful_to_highlight},
and using the solution maps $\hP, \hB$, we can then define a new solution map}
 $\hF: \R^{N - M} \times \R^M \to \R^N$ so that
 \begin{equation*}
   F (\bz) = \by \quad  \text{ if and only if } \quad
   \bz = \hF(\bw, \by), \quad \text{ for a unique } \bw \in \R^{N - M}\, .
\end{equation*}
\end{assumption}
With this new solution map we can rewrite \eqref{PDE-nonlinear-representer-optimization}
as an unconstrained optimization problem.

\begin{corollary}\label{cor:PDE-unconstrained-representer-theorem}
\bh{Let Assumption \ref{a:added}  and the conditions of Corollary~\ref{cor:PDE-representer-theorem} hold.} Then
  $u^\dagger$ is a minimizer of \eqref{PDE-optimization-form-initial} if and only if
  $u^\dagger = \sum_{n=1}^N z^\dagger_n \chi_n$  with $\bz^\dagger = F'(\bw^\dagger, \by)$
  and $\bw^\dagger \in \R^{N- M}$ is a minimizer of
\begin{equation}\label{PDE-unconstrained-representer-theorem}
  \minimize_{\bw \in \R^{N - M}} \quad  \hF(\bw, \by)^T \Theta^{-1} \hF(\bw, \by)\, .
\end{equation}
\end{corollary}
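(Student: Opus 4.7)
The plan is to chain Corollary~\ref{cor:PDE-representer-theorem} with the reparametrization provided by Assumption~\ref{a:added}, so that the constrained finite-dimensional problem \eqref{PDE-nonlinear-representer-optimization} is rewritten as the unconstrained problem \eqref{PDE-unconstrained-representer-theorem}. No new analytic content beyond these two ingredients is needed; the argument is essentially bookkeeping.

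First I would invoke Corollary~\ref{cor:PDE-representer-theorem} to conclude that $u^\dagger \in \mU$ is a minimizer of \eqref{PDE-optimization-form-initial} if and only if $u^\dagger = \sum_{n=1}^N z^\dagger_n \chi_n$ where $\bz^\dagger \in \R^N$ solves
\begin{equation*}
  \minimize_{\bz \in \R^N} \: \bz^T \Theta^{-1} \bz \quad \text{subject to} \quad F(\bz) = \by.
\end{equation*}
Next I would use Assumption~\ref{a:added}, which asserts that the solution map $\hF: \R^{N-M} \times \R^M \to \R^N$ parametrizes the feasible set: for each $\bz$ with $F(\bz) = \by$ there is a unique $\bw \in \R^{N-M}$ with $\bz = \hF(\bw, \by)$, and conversely every $\bz$ of this form satisfies the constraint. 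Consequently the map $\bw \mapsto \hF(\bw, \by)$ is a bijection from $\R^{N-M}$ onto the feasible set $\{\bz \in \R^N : F(\bz) = \by\}$.

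Substituting this parametrization into the objective turns the constrained minimization into the unconstrained one \eqref{PDE-unconstrained-representer-theorem}. The bijection ensures a one-to-one correspondence between minimizers: if $\bw^\dagger$ minimizes $\bw \mapsto \hF(\bw, \by)^T \Theta^{-1} \hF(\bw, \by)$ then $\bz^\dagger := \hF(\bw^\dagger, \by)$ is feasible and attains the minimum of $\bz^T \Theta^{-1} \bz$ over the feasible set; conversely any minimizer $\bz^\dagger$ of the constrained problem yields a unique preimage $\bw^\dagger \in \R^{N-M}$ minimizing the unconstrained objective. Combining this with the representer formula $u^\dagger = \sum_{n=1}^N z^\dagger_n \chi_n$ from Corollary~\ref{cor:PDE-representer-theorem} gives the desired equivalence.

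The only mild subtlety is the observation that invertibility of $\Theta$ (inherited from Corollary~\ref{cor:PDE-representer-theorem}) ensures that the quadratic form $\bz^T \Theta^{-1} \bz$ is well-defined and strictly convex in $\bz$, so that existence of a minimizer for the constrained problem (already guaranteed by Corollary~\ref{cor:PDE-representer-theorem}) transfers to the unconstrained problem via the bijection. I do not anticipate a genuine obstacle; the statement is really a change-of-variables corollary, and the proof essentially amounts to writing down the previous sentence carefully.
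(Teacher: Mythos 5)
Your proposal is correct and matches the paper's (implicit) argument: the paper states this corollary without proof, treating it as the immediate consequence of composing Corollary~\ref{cor:PDE-representer-theorem} with the change of variables $\bz = \hF(\bw,\by)$ that Assumption~\ref{a:added} provides, which is exactly the bookkeeping you carry out. (Note the statement's $F'(\bw^\dagger,\by)$ is evidently a typo for $\hF(\bw^\dagger,\by)$, which you have correctly used.)
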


\begin{runningexample}{NE}
Let us recall that we already eliminated the
equality constraints in the context of the PDE \eqref{elliptic-proto-PDE} through the calculations leading
to the unconstrained minimization problem  \eqref{running-example-unconstrained-optimization-problem}. In that example, we used the calculation
\begin{equation*}
    P(v_1, v_2) = -v_2 + \tau(v_1) = y \Leftrightarrow v_2 = \tau(v_1) - y = \hP(v_1, y)\, ,
\end{equation*}
that is, we solved $\Delta u$ in terms of $\tau(u)$ and the source term in the interior of the domain in
order to eliminate the PDE constraint. We further imposed the boundary conditions exactly since
the boundary map $B$ is simply the pointwise evaluation function in that example.

\bh{Alternatively, we could eliminate  $v_1$ by setting $v_1 = \tau^{-1}( y + v_2)$, assuming that $\tau^{-1}$ has closed form. While both elimination strategies are conceptually valid they may lead to
very different optimization problems. The former corresponds to solving for the values of $u$ at the
collocation points while the latter solves for the values of $\Delta u$ at the interior points under
Dirichlet boundary conditions at the boundary collocation points.
}
\end{runningexample}
\subsubsection{Relaxing the Equality Constraints}\label{sec:relaxed-constraints}
The choice of the solution maps $\hP, \hB$ in \eqref{nonlinearity-solution-maps}, i.e., the choice of
the variable which the equations are solved for, has an impact
on the conditioning of \eqref{PDE-unconstrained-representer-theorem}; it is  not a priori clear
that poor conditioning can always be avoided by choice of variables to solve for. Moreover, for certain
nonlinear PDEs \bh{Assumption~\ref{a:added} may not hold}. In such cases it may be useful to
relax the equality constraints in
\eqref{PDE-optimization-form-standard} and instead consider
a loss of the following form:
\begin{equation}\label{PDE-relaxed-optimization}
         \left.
        \begin{aligned}
          & \minimize_{u \in \mU} \quad \|u\|^2
          + \frac{1}{\beta^2}  | F ([\bphi,u]) - \by |^2\, ,
      \end{aligned}
      \right.
      \end{equation}
      where $\beta^2 >0$ is a small positive parameter. Likewise
      \eqref{PDE-nonlinear-representer-optimization}
      can be relaxed to obtain
\begin{equation}\label{PDE-relaxed-representer-optimization}
         \left.
        \begin{aligned}
          & \minimize_{\bz \in \R^N} \quad  \bz^T \Theta^{-1} \bz
          + \frac{1}{\beta^2}  | F (\bz) - \by |^2\, .
      \end{aligned}
      \right.
      \end{equation}
      Then a similar argument to the proof of Theorem~\ref{thm:convergence} can be used to show that
      a minimizer of the relaxed optimization problem for $u$ converges to the
      solution $u^\star$ of the PDE as the number of collocation points $M$ increases and
      the parameter $\beta$ vanishes.

      \begin{proposition}\label{thm:relaxed-constraint-convergence}
      Fix $\beta>0.$ Then the optimization problem \eqref{PDE-relaxed-optimization} has minimizer
        $u^\dagger_{\beta,M}$ which (assuming $\Theta$ to be invertible) may be expressed in the form
        $$u^\dagger_{\beta,M}:= \sum_{n=1}^N  z^\dagger_{\beta, n} \chi_n \in \mU\, ,$$ where $\bz^\dagger_\beta$
        denotes a minimizer of \eqref{PDE-relaxed-representer-optimization}.
        Under the assumptions of Theorem \ref{thm:convergence} it follows that,
         as $(\beta,M^{-1}) \to 0$,  the relaxed estimator
        $u^\dagger_{\beta,M}$  converges to  $u^\star$ pointwise and in $\mH$.
      \end{proposition}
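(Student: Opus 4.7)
The plan is to establish existence and the gamblet expansion for $u^\dagger_{\beta,M}$ first, and then mimic the compactness argument from the proof of Theorem~\ref{thm:convergence}, with a new ingredient to handle the fact that the PDE constraints are now only satisfied approximately, controlled by $\beta$.

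First I would reduce \eqref{PDE-relaxed-optimization} to \eqref{PDE-relaxed-representer-optimization} by the same two-level technique used in Proposition~\ref{prop:nonlinear-representer-optimization}: for fixed $\bz \in \R^N$, minimize $\|u\|^2$ subject to $[\bphi, u] = \bz$ using Proposition~\ref{prop:variational-conditional-mean-characterization-linear-case}, which yields $u = \sum_{n=1}^N z_n \chi_n$ with value $\bz^T \Theta^{-1} \bz$. The outer problem over $\bz \in \R^N$ then has a continuous objective that is coercive (since $\Theta^{-1}$ is positive definite and the misfit term is nonnegative), so a minimizer $\bz^\dagger_\beta$ exists. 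Setting $u^\dagger_{\beta,M} := \sum_n z^\dagger_{\beta,n} \chi_n$ produces the claimed minimizer of \eqref{PDE-relaxed-optimization}.

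For the convergence as $(\beta, M^{-1}) \to 0$, the key inequality is obtained by testing against $u^\star$ itself: since $u^\star$ satisfies the PDE pointwise, $F([\bphi, u^\star]) = \by$ exactly, and hence
\begin{equation*}
\|u^\dagger_{\beta,M}\|^2 + \frac{1}{\beta^2} |F([\bphi, u^\dagger_{\beta,M}]) - \by|^2 \leq \|u^\star\|^2.
\end{equation*}
This gives the two separate bounds $\|u^\dagger_{\beta,M}\| \leq \|u^\star\|$ and $|F([\bphi, u^\dagger_{\beta,M}]) - \by| \leq \beta \|u^\star\|$. Consider a sequence with $\beta_p \to 0$ and $M_p \to \infty$. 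The first bound, together with the compact embedding $\mU \subset \mH$, yields a subsequence $u^\dagger_{\beta_p, M_p}$ converging in $\mH$ to some $u^\dagger_\infty$. Because $\mH$ is continuously embedded in $C^{t}(\Omega)\cap C^{t'}(\overline{\Omega})$, this convergence lifts to convergence of all derivatives appearing in $\mP$ and $\mB$, so that $\mP(u^\dagger_{\beta_p, M_p}) \to \mP(u^\dagger_\infty)$ in $C(\Omega)$ and similarly for $\mB$ on $\partial\Omega$, and $\mP(u^\dagger_\infty)$ is uniformly continuous.

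The last step, where the relaxation enters in a new way, is to show that $u^\dagger_\infty$ solves the PDE classically. For arbitrary $\bx \in \Omega$ and the nearest interior collocation point $\bx_{m_p}$ from the $p$-th problem, a four-term triangle inequality gives
\begin{equation*}
\begin{aligned}
|\mP(u^\dagger_\infty)(\bx) - f(\bx)| \leq\ & |\mP(u^\dagger_\infty)(\bx) - \mP(u^\dagger_\infty)(\bx_{m_p})| + \|\mP(u^\dagger_\infty) - \mP(u^\dagger_{\beta_p, M_p})\|_{C(\Omega)} \\
& + |\mP(u^\dagger_{\beta_p, M_p})(\bx_{m_p}) - f(\bx_{m_p})| + |f(\bx_{m_p}) - f(\bx)|.
\end{aligned}
\end{equation*}
The first and fourth terms vanish as $p \to \infty$ by uniform continuity of $\mP(u^\dagger_\infty)$ and $f$ together with the fill-distance hypothesis; the second vanishes by the $C(\Omega)$-convergence above; and the third, which is the genuinely new piece compared to Theorem~\ref{thm:convergence}, is bounded by the Euclidean norm $|F_{M_p}([\bphi_{M_p}, u^\dagger_{\beta_p, M_p}]) - \by_{M_p}| \leq \beta_p \|u^\star\|$, which vanishes because $\beta_p \to 0$. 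An identical argument on $\partial\Omega$ handles $\mB$. Uniqueness of the classical solution then forces $u^\dagger_\infty = u^\star$, and since every subsequence has a further subsequence converging to the same $u^\star$, the whole sequence converges in $\mH$, and pointwise convergence follows from $\mH \hookrightarrow C(\overline{\Omega})$.

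The main obstacle is the third term above: in contrast with Theorem~\ref{thm:convergence}, it is nonzero and its control requires both the inequality obtained by comparing with $u^\star$ and the joint limit $\beta \to 0$. A delicate point that one should verify is that the comparison inequality only requires $u^\star \in \mU$ (not anything about $\beta$), so the two limits can be taken independently in either order.
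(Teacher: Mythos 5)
Your proof is correct and follows essentially the same route as the paper's (much terser) argument: reduction to the finite-dimensional problem \eqref{PDE-relaxed-representer-optimization} via the representer theorem, comparison with $u^\star$ to obtain both $\|u^\dagger_{\beta,M}\|\le\|u^\star\|$ and the $O(\beta)$ bound on the constraint violation, and then the compactness argument of Theorem~\ref{thm:convergence}. The paper compresses the last step into the remark that ``taking $\beta \to 0$ as $M \to \infty$ delivers exact satisfaction of the constraints in the limit''; your explicit four-term triangle inequality, with the third term controlled by $\beta_p\|u^\star\|$, is precisely the right way to fill in that sentence.
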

\begin{proof}
By the arguments used in  Proposition \ref{prop:nonlinear-representer-optimization} the
minimizer of \eqref{PDE-relaxed-representer-optimization} delivers a minimizer of
\eqref{PDE-relaxed-optimization} in the desired form.
Since $u^\star$ satisfies $F ([\bphi,u^\star]) - \by=0$ we must have
$\|u^\dagger_{\beta,M}\| \leq \|u^\star\|$.
Then a compactness argument similar to that used in  the proof  of Theorem \ref{thm:convergence}, noting
that taking $\beta \to 0$ as $M \to \infty$ delivers exact satisfaction of the constraints in the
limit, yields the
desired result.
\end{proof}

      \begin{runningexample}{NE}
    When only part of the constraints $F(\bz)=\by$ can be explicitly solved, as is often the case for boundary values, we can also combine the elimination and relaxation approach.
        Employing the relaxation approach for the interior constraint and the elimination approach for the boundary constraints in \eqref{elliptic-proto-PDE}  amounts to replacing the
        optimization problem \eqref{eqn: finite dim optimization for elliptic eqn}, which is the analogue of
        \eqref{PDE-nonlinear-representer-optimization} for our running example, with the following problem
        for a small parameter $\beta^2 >0$:
               \begin{equation}
               \label{eqn: nonlinear-elliptic-PDE-relaxation-approach}
   \left\{
   \begin{aligned}
   \minimize_{\bz \in \R^{M + \Md}}  \quad   & \bz^T  \cc(\bphi,\bphi)^{-1}\bz  + \frac{1}{ \beta^2}
  \left[  \sum_{m=1}^{\Md} \left| z^{(2)}_m +\tau(z^{(1)}_m)- f(\bx_m) \right|^2
  \right] \\
  \text{s.t.} \quad & z_m^{(1)}=g(\bx_m), \quad \text{for } m = M_{\Omega}+1,..., M\, .
\end{aligned}
\right.
  \end{equation}
  We will numerically compare the above approach with the full elimination approach \eqref{running-example-unconstrained-optimization-problem} in Subsection \ref{sec:elliptic-PDE}.
      \end{runningexample}

\subsection{Implementation}\label{sec:implementation}
We now outline the details of a numerical algorithm for solution of
nonlinear PDEs based on the discussions of the previous subsection and in
particular Corollary~\ref{cor:PDE-representer-theorem}.
We discuss the construction of the matrix $\Theta$ in Subsection~\ref{sec:constructing-theta}
followed by a variant of the \bh{Gauss--Newton} algorithm in Subsection~\ref{sec:GN-algorithm} for solving the unconstrained or relaxed problems outlined in Subsections~\ref{sec:deadling-with-constraints}.
 \bh{We also note that strategies  for regularizing the matrix $\Theta$ by adding \as{small diagonal} (``nugget'') terms
 are collected  in Appendix~\ref{app:reg-theta}}.

\subsubsection{Constructing $\Theta$}\label{sec:constructing-theta}
We established through Corollary~\ref{cor:PDE-representer-theorem} that a solution to
\eqref{PDE-optimization-form-initial} can be completely identified by
$\bz^\dagger$ a minimizer of \eqref{PDE-nonlinear-representer-optimization}
as well as the gamblets $\chi_n$. Since here we are concerned with the strong form of
the PDE \eqref{prototypical-PDE} it is reasonable to assume that at the very least
$\mU \subset  C(\overline{\Omega})$; although we often require higher regularity
so that the PDE constraints can be imposed pointwise.
 This assumption suggests that our GP model for $u^\star$ can equivalently be
identified via a covariance kernel function as opposed to the covariance operator $\C$.
To this end, given a covariance operator $\C$ define the covariance kernel $\cc$ (equivalently Green's function of $\C^{-1}$) as
\begin{equation}
  \label{covariance-kernel-def}
  \cc: \overline{\Omega} \times \overline{\Omega} \mapsto \R, \qquad \cc( \bx, \bx') :=
  [ \updelta_{\bx}, \C \updelta_{\bx'} ]\, .
\end{equation}
It is known that the kernel $\cc$ completely characterizes the GP $\mN(0, \C)$ under mild conditions \cite{van2008reproducing};
that is $\mN(0, \C) \equiv \mathcal{GP}(0,\cc).$  Let us now consider the matrix $\Theta$ in block form
\begin{equation*}
  \Theta =
  \begin{bmatrix}
    \Theta^{(1,1)} & \Theta^{(1,2)} &  \cdots & \Theta^{(1, Q)} \\
    \Theta^{(2,1)} & \Theta^{(2,2)} & \cdots & \Theta^{(2,Q)} \\
    \vdots & \vdots & \ddots & \vdots \\
    \Theta^{(Q,1)} & \Theta^{(Q,2)} & \cdots & \Theta^{(Q,Q)}
  \end{bmatrix}\, .
\end{equation*}
Using the $L^2(\Omega)$ duality pairing between
$\mU$ and $\mU^\ast$
we can identify  the blocks
\begin{equation*}
  \Theta^{(q, j)} = \cc( \bphi^{(q)}, \bphi^{(j)})\, ,
\end{equation*}
where we used the shorthand notation of Subsection~\ref{sec:problem-setup}  for the kernel matrix,
with the $\bphi^{(q)}$ defined as in
\eqref{phi-definition} and the subsequent discussion. To this end the entries of
the $\Theta^{(q, j)}$ take the form
\begin{equation*}
  \Theta^{(q, j)}_{m, i} =  L_q^\bx L_j^{\bx'} \cc( \bx, \bx') \big|_{(\bx, \bx') = (\bx_m, \bx_i)}\, ,
\end{equation*}
where we used the superscripts $\bx, \bx'$ to denote the variables with respect to
which the differential operators $L_q, L_j$ act. Note that $\Theta \in \R^{N \times N}$
with $N = MQ_b+M_{\Omega}(Q-Q_b)$ following the
 definition of $ \bphi^{(q)}$ in Subsection \ref{sec:nonlinear-PDE-GP}.

\subsubsection{A \bh{Gauss--Newton} Algorithm}\label{sec:GN-algorithm}
Here we outline a variant of the \bh{Gauss--Newton
algorithm \cite[Sec.~10.3]{nocedal2006numerical}}
for solution of the unconstrained optimization problem
\eqref{PDE-unconstrained-representer-theorem}. Recall our definition of the maps $\hP, \hB$
in \eqref{nonlinearity-solution-maps} and in turn the map $\hF$. We then propose to
approximate a minimizer $\bw^\dagger$ of  \eqref{PDE-unconstrained-representer-theorem}
with a sequence of elements $\bw^{\ell}$ defined iteratively via
  $\bw^{\ell + 1} = \bw^{\ell} + \alpha^\ell  \delta \bw^{\ell},$
where $\alpha^\ell > 0 $ is an appropriate step size while
$\delta \bw^{\ell}$ is the minimizer of the optimization problem
\begin{equation*}\label{GN-alg-optimization-unconstrained}
  \minimize_{ \delta \bw \in \R^{N - M}} \quad
  \left( \hF(\bw^\ell, \by) +  \delta\bw^T  \nabla  \hF(\bw^\ell, \by) \right) ^T
  \Theta^{-1} \left( \hF(\bw^\ell, \by) +  \delta\bw^T  \nabla  \hF(\bw^\ell, \by) \right)\, ,
\end{equation*}
and the gradient of $\hF$ is computed with respect to the $\bw$ variable only.
\footnote{\bh{Note that our proposed method is nothing more than the standard Gauss--Newton
algorithm with Euclidean norm $|\cdot |$ defining the least-squares functional replaced with the weighted norm $| \Theta^{-1/2} \cdot |$ \cite[Sec.~10.3]{nocedal2006numerical}.} }

This approach can be applied also to solve the relaxed problem \eqref{thm:relaxed-constraint-convergence}
where this time we consider the sequence of approximations
  $\bz^{\ell + 1} = \bz^{\ell} + \alpha^\ell \delta \bz^\ell,$
where  $\delta \bz^\ell$ is  the minimizer of
\begin{equation*}\label{GN-alg-optimization-relaxed}
  \minimize_{ \delta \bz \in \R^{N}} \quad
  \left(  \bz^\ell + \delta \bz \right)^T \Theta^{-1} \left( \bz^\ell + \delta \bz\right)
  + \frac{1}{\beta^2}  \left| F(\bz^\ell) + \delta \bz^T \nabla F(\bz^\ell) - \by \right|^2\, .
\end{equation*}
Since   \eqref{GN-alg-optimization-unconstrained} and \eqref{GN-alg-optimization-relaxed}
are both  quadratic in $\delta \bw$ and $\delta \bz$ respectively,  they
can be solved exactly and efficiently at each step
and the step-size parameters $\alpha^\ell$ can be fixed or computed adaptively
using standard step-size selection techniques \cite{nocedal2006numerical}. However, in our  experiments in
Section~\ref{sec:numerics}, we  find that both algorithms converge quickly simply by setting
 $\alpha^\ell = 1$.

\begin{runningexample}{NE}
Let us return once more to the nonlinear elliptic PDE considered in Subsection~\ref{sec:problem-setup}.
Observe that \eqref{running-example-unconstrained-optimization-problem} is precisely in the form of \eqref{PDE-unconstrained-representer-theorem} and so in order to formulate our \bh{Gauss--Newton}
iterations we need to linearize the vector valued function
\begin{equation*}
\bw \mapsto
     \big(\bw, g(\bx_{\partial\Omega}), f(\bx_{\Omega})-\tau(\bw)\big)\, ,
\end{equation*}
which can easily be achieved by linearizing $\tau$. To this end, we solve \eqref{PDE-unconstrained-representer-theorem}
via the iteration $\bw^{\ell + 1} = \bw^\ell + \alpha^\ell \delta \bw^\ell$ where $\delta \bw^\ell $ is the minimizer
of the functional
\begin{equation*}
     \big(\bw^\ell + \delta \bw , g(\bx_{\partial\Omega}), f(\bx_{\Omega})-\tau(\bw^\ell) - \delta \bw^T \nabla \tau(\bw^\ell) \big)
      \cc(\bphi,\bphi)^{-1}
      \begin{pmatrix}\bw^\ell + \delta \bw \\
       g(\bx_{\partial\Omega}) \\
       f(\bx_{\Omega})-\tau(\bw^\ell) - \delta \bw^T \nabla \tau(\bw^\ell) \end{pmatrix}\, .
\end{equation*}
We also note that the sequence of approximations  obtained by the above
implementation of \bh{Gauss--Newton} coincides with  successive
kernel collocation approximations  of the solution of the following  particular  linearization of the
PDE,
\begin{equation}
-\Delta u + u \tau'(u^{n})=f-\tau(u^n) +u^n\tau'(u^{n})\, ,
\end{equation}
subject to the Dirichlet boundary conditions.
\end{runningexample}

\subsubsection{Computational Bottlenecks}\label{subsec:numerical-cost}
\bh{
The primary computational cost of our method lies in the approximation of the matrix $\Theta^{-1}$. 
Efficient factorizations and approximations of
$\Theta^{-1}$ have been studied extensively in the GP regression literature \cite{quinonero2005unifying} as well as spatial statistics, Kriging and numerical analysis
(see \cite{schafer2020sparse, schafer2017compression} and the discussions within).
In this paper, we do not employ these algorithms and choose instead to use standard $\mathcal{O}(N^3)$ algorithms
to factorize $\Theta$.

 The algorithm introduced in \cite{schafer2020sparse} is particularly interesting as
 it  directly approximates the Cholesky factors of $\Theta^{-1}$ by querying a subset of the entries of $\Theta$.
In fact, that algorithm alleviates the need for a \as{small
diagonal regularization (``nugget'')} term by directly computing the Cholesky factors of $\Theta^{-1}$  from the entries of $\Theta$. This could be done by extending the algorithm introduced and analyzed in \cite{schafer2020sparse}. This algorithm is based on the identification of an explicit formula for computing    approximate Cholesky factors $L$ minimizing the Kullback-Leibler divergence between $\mathcal{N}(0,\Theta^{-1})$ and $\mathcal{N}(0,LL^T)$ given a sparsity constraint on the entries of $L$.
The proposed formula  is  equivalent to the Vecchia approximation \cite{vecchia1988estimation} (popular in geostatistics).
The resulting algorithm outlined in
 \cite{schafer2020sparse} computes $\epsilon$ approximate Cholesky factors of $\Theta^{-1}$ in  $\mathcal{O}(N \log^{2d} (N/\epsilon))$ complexity by accessing $\mathcal{O}(N \log^{d} (N/\epsilon))$ entries of $\Theta$.

 Another possible bottleneck is the computation of the gamblets $\chi_n$. The articles \cite{owhadi2019operator, schafer2017compression} show that the  gamblets can be approximated with compactly supported functions in complexity  $\mathcal{O}(N \log^{2d+1} (N/\epsilon))$.
 We also note that  the complexity-vs-accuracy guarantees of \cite{owhadi2019operator, schafer2020sparse, schafer2017compression}  have only been established for   functionals $\phi_n$ that are  Dirac delta functions and
kernels  $\cc$ that are the Green's functions of arbitrary elliptic differential operators (mapping $H^s(\Omega)$ to $H^{-s}(\Omega)$).
Extension of those results to functionals $\phi_n$  considered here is an interesting future direction.
}

\subsection{Numerical Experiments for Nonlinear PDEs}\label{sec:numerics}
In this subsection, we implement our algorithm to solve several nonlinear PDEs, including the nonlinear elliptic equation in Subsection \ref{sec:elliptic-PDE}, Burgers' equation in Subsection \ref{sec:Burgers-Revisited} and the regularized Eikonal equation in Subsection \ref{sec:eikonal-PDE}. For all of these equations, we will start with a fixed $M$ and demonstrate the performance of our algorithm by showing the pattern of collocation points, the loss function history of the \bh{Gauss--Newton} iteration, and contours of the solution errors.  Then, we vary the value of $M$ and study how the errors change with respect to $M$.  We also compare the elimination and relaxation approaches for dealing with the nonlinear constraints.

All the experiments are conducted using Python with the JAX package
for automatic differentiation\footnote{\bh{We use JAX for convenience and
all derivatives in our methodology can be computed using standard techniques such as symbolic
computation or adjoint methods.}}. \as{In particular, we use} automatic differentiation
to form the kernel matrix $\Theta$ that involves derivatives of the kernel function, and to optimize the loss function via the \bh{Gauss--Newton} method.
\as{Details on the choice of small diagonal regularization
(``nugget'') terms for these experiments are presented
in Appendices~\ref{app:Nonlinear-Elliptic-Nugget} through \ref{app:Eikonal-Nugget}.}

\yc{\begin{remark}
\bh{In all of the numerical experiments in this section we used a
set of collocation points that are drawn randomly from the uniform distribution
over the domain $\Omega$,
as opposed to the deterministic uniform grid used} in Subsection \ref{sec:intro:implementation}. \as{The choice of the random collocation
points was made to highlight the flexibility of our methodology. Furthermore,
random collocation points are often used in other machine learning algorithms
for solution of PDEs such as PINNs \cite{raissi2019physics}
and so adopting this approach allows direct comparison with
such methods. We observed}
empirically that the random grids had similar accuracy to the deterministic uniform
grid in all experiments except for Burgers' equation in Subsection~\ref{sec:Burgers-Revisited}, where random collocation points outperformed
the uniform grid.
\bh{Understanding this surprising performance gap is an interesting problem related to
active learning and the acquisition of collocation points; \as{we do not address this issue
here}.}
\end{remark}
}

\begin{remark}
\as{Float64 data type was employed in the experiments below. This  allows the use of small diagonal
regularization (``nugget'') terms
(see Appendix~\ref{app:reg-theta} for details) which
do not affect accuracy in the computations described in this
paper. In contrast, if Float32 data type (the default setting in JAX) is used, we found the need to regularize $\Theta$ with
larger diagonal terms, leading to an observable accuracy floor.}
\end{remark}

\subsubsection{A Nonlinear Elliptic PDE}\label{sec:elliptic-PDE}
We revisit again the nonlinear elliptic equation in \eqref{elliptic-proto-PDE}. As in Subsection \ref{sec:intro:implementation}, we take $d=2$, $\Omega=(0,1)^2$ and  $\tau(u) = u^3$ together
  with homogeneous Dirichlet boundary conditions $g(\bx) =0$.
  The true solution is prescribed to be $u^\star(\bx) = \sin(\pi x_1)\sin(\pi x_2) + 4\sin(4\pi x_1)\sin(4\pi x_2)$ and the corresponding right hand side $f(\bx)$ is computed using the equation. We choose the Gaussian kernel
  $K(\bx,\by; \sigma) = \exp(-\frac{|\bx-\by|^2}{2\sigma^2})$ with a lengthscale parameter $\sigma$.
\begin{figure}[ht]
    \centering
    \begin{overpic}[width=15cm]{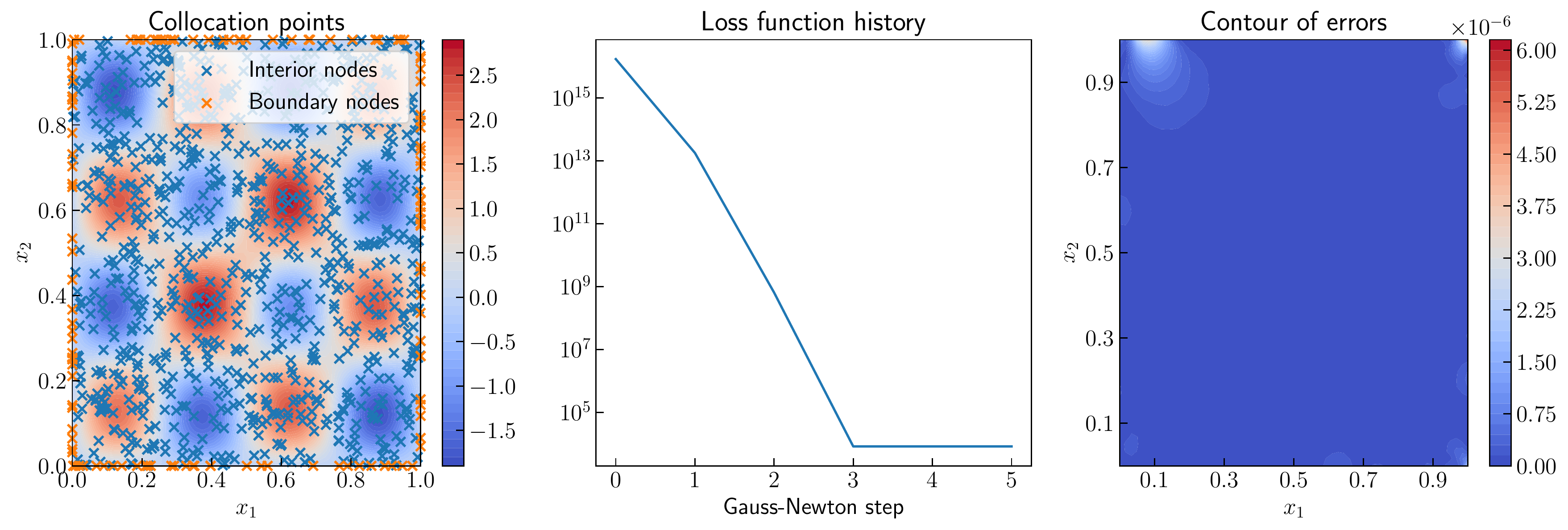}
    \put(14,-1){\scriptsize (a)}
    \put(50,-1){\scriptsize (b)}
    \put(82,-1){\scriptsize (c)}
    \end{overpic}
    \caption{Numerical results for the nonlinear elliptic PDE \eqref{elliptic-proto-PDE}: (a) a sample of collocation points and  contours of the true solution;
    (b) convergence history of the \bh{Gauss--Newton} algorithm;
    (c) contours of the solution error. \bh{An adaptive nugget term
    with global parameter $\eta = 10^{-13}$ was employed (see Appendix~\ref{app:Nonlinear-Elliptic-Nugget})}}
    \label{fig:nonlinear elliptic, demon}
\end{figure}

First, for $M=1024$ and $M_\Omega = 900$, we randomly sample collocation points in $\Omega$ as shown in  Figure~\ref{fig:nonlinear elliptic, demon}(a).
We further show an instance of the convergence history of the \bh{Gauss--Newton}  algorithm
in Figure~\ref{fig:nonlinear elliptic, demon}(b)
where we
 solved the unconstrained optimization problem \eqref{running-example-unconstrained-optimization-problem} after eliminating the
 equality constraints. We used kernel parameter $\sigma = M^{-1/4}$ \bh{and  appropriate
 nugget terms as outlined in  Appendix~\ref{app:Nonlinear-Elliptic-Nugget}}.
 We initiated the algorithm with a Gaussian random initial guess. We observe that only $3$ steps sufficed for  convergence. In Figure~\ref{fig:nonlinear elliptic, demon}(c), we show the contours of the solution error.  The
 error in the approximate solution is seen to be fairly uniform spatially, with larger errors
 observed near the boundary, when $M=1024.$ We
 note that the main difference between these experiments and those  in
 Subsection~\ref{sec:intro:implementation} is that here we used randomly distributed
 collocation points while a uniform grid  was used previously.

Next, we compare two approaches for dealing with the PDE  constraints
as outlined in Subsection~\ref{sec:deadling-with-constraints}.
We applied both the elimination and relaxation approaches, defined by the
optimization problems \eqref{PDE-unconstrained-representer-theorem}
and \eqref{PDE-relaxed-representer-optimization} respectively, for different choices of $M$. In the relaxation approach, \yc{we set $\beta^2=10^{-10}$.}
Here we set $M = 300, 600, 1200, 2400$ and $M_\Omega = 0.9 \times M$.
The $L^2$ and $L^{\infty}$ errors of the converged \bh{Gauss--Newton} solutions are shown in Table \ref{Table:Comparison between the elimination and relaxation approaches}. Results were averaged over 10 realizations of the
random collocation points.  From the table we observe that  the difference in solution errors was very mild and  both methods \bh{were convergent} as $M$ increases.
We note that \bh{in the relaxed setting, convergence} is closely tied to our choice of $\beta$, and choosing
an inadequate value, i.e. too small or too large, can lead to inaccurate solutions. In terms of computational costs, the elimination approaches take 2-3 steps of \bh{Gauss--Newton} iterations on average, while the relaxation approach needs 5-8 steps. Thus while the elimination strategy appears to
be more efficient, we do not observe a significant difference in the order  of complexity of the methods for dealing with the constraints\yc{, especially when the number of collocation points becomes large}.

\begin{table}[ht]
\centering
\yc{
\begin{tabular}{lllll}
\hline
$M$                           & 300        & 600        & 1200       & 2400      \\ \hline
Elimination: $L^2$ error      & 4.84e-02 & 6.20e-05 & 2.74e-06 & 2.83e-07 \\
Elimination: $L^\infty$ error & 3.78e-01 & 9.71e-04 & 4.56e-05& 5.08e-06 \\
Relaxation: $L^2$ error       & 1.15e-01 & 1.15e-04 & 1.87e-06 & 1.68e-07 \\
Relaxation: $L^\infty$ error  & 1.21e+00 & 1.45e-03 & 3.38e-05 & 1.84e-06 \\ \hline
\end{tabular}
}
\caption{Comparison between the elimination and relaxation approaches to
deal with the equality constraints for the nonlinear elliptic PDE \eqref{elliptic-proto-PDE}. Uniformly random collocation points were sampled with different $M$ and $M_{\Omega}=0.9M$. Adaptive nugget terms were employed with the
global nugget parameter \bh{$\eta=10^{-12}$ (see Appendix~\ref{app:Nonlinear-Elliptic-Nugget}).} The lengthscale parameter $\sigma =0.2$. Results were averaged over 10 realizations of the
random collocation points. \yc{The maximum Gauss-Newton iteration was 10.}}
\label{Table:Comparison between the elimination and relaxation approaches}
\end{table}

\subsubsection{Burgers' Equation}\label{sec:Burgers-Revisited}
We consider  numerical solution of the viscous Burgers equation:
\begin{equation}
    \label{Burgers-proto-PDE}
    \begin{aligned}
      \partial_t u +  u \partial_s u  - \nu  \partial_s^2 u  &= 0, \quad \forall
      (s,t) \in (-1, 1)  \times (0,1]\, , \\
      u(s, 0) & = - \sin( \pi x)\, , \\
      u(-1, t) & = u(1, t)  = 0\, .
  \end{aligned}
  \end{equation}
We adopt an approach in which we solve the problem by conditioning a Gaussian process
in space-time\footnote{It would also be possible to look at an incremental in time approach, for example
using backward Euler discretization, in which one iteratively in time solves a nonlinear
elliptic two point boundary value problem by conditioning a spatial Gaussian process; we do not pursue this here and leave it as a future direction.}.
In our experiments we take $\nu =0.02$ and consider $\bx = (s,t)$.
We write this PDE in the form of \eqref{assumptions-on-mP-mB-PDE}
with $Q = 4$ and $Q_b = 1$
with linear operators
$L_1(u)=u, L_2(u)=\partial_t u, L_3(u)=\partial_s u, L_4(u)=\partial_s^2 u$ and the
nonlinear map $P(v_1,v_2,v_3,v_4) = v_2+v_1v_3-\nu v_4^2$. The boundary part is simply  $B(v_1) = v_1$.
 We then eliminate the equality constraints in our optimization framework
 following the approach of Subsection~\ref{sec:eliminating-constraints}
 using the equation $v_2 = \nu v_4^2-v_1v_3$.

We  randomly sampled \yc{$M=2400$ with $M_{\Omega} =2000 $ points} in the computational domain $\Omega = [-1, 1] \times [0,1]$ see  Figure \ref{fig: Burgers demonstration}(a), where we also plot  contours of the true solution $u$. The \bh{Gauss--Newton} algorithm was then
 applied to solve the unconstrained optimization problem. \as{We computed the true solution
 from the Cole--Hopf transformation, together with the numerical quadrature.}
 Since the time and space variability of the solution to Burgers' equation are
 significantly different, we chose an anisotropic kernel
 \footnote{\bh{One can also design/learn a non-stationary kernel using the approaches
 discussed in Subsection~\ref{sec:KernelFlow}. However, the parameterization of
 such kernels and strategies for tuning their hyperparameters are outside the scope
 of this article.}}
 \as{
 $$\cc\Bigl((s,t),(s',t'); \sigma\Bigr) = \exp\Bigl(-\sigma_1^{-2}(s-s')^2-\sigma_2^{-2}(t-t')^2\Bigr)\,$$
 with  $\sigma = (1/20,1/3)$} together \as{with
 an adaptive diagonal regularization (``nugget'') as outlined in Appendix~\ref{app:Burgers-Nugget}.}

 We plot the \bh{Gauss--Newton} iteration history in  Figure \ref{fig: Burgers demonstration}(b) and observe that 10 steps sufficed for convergence.
 We compare the converged solution to the true solution and present the contours of the
 error in Figure \ref{fig: Burgers demonstration}(c). The maximum errors occured close to the (viscous) shock at time $1$ as  expected. In Figure \ref{fig: Burgers demonstration}(d--f), we also compare various time slices of the numerical and true solutions at times  $t=0.2,0.5,0.8$ to further highlight the ability of our method in capturing the
 location and shape of the shock.
    \begin{figure}[ht]
    \centering
    \begin{overpic}[width=15cm]{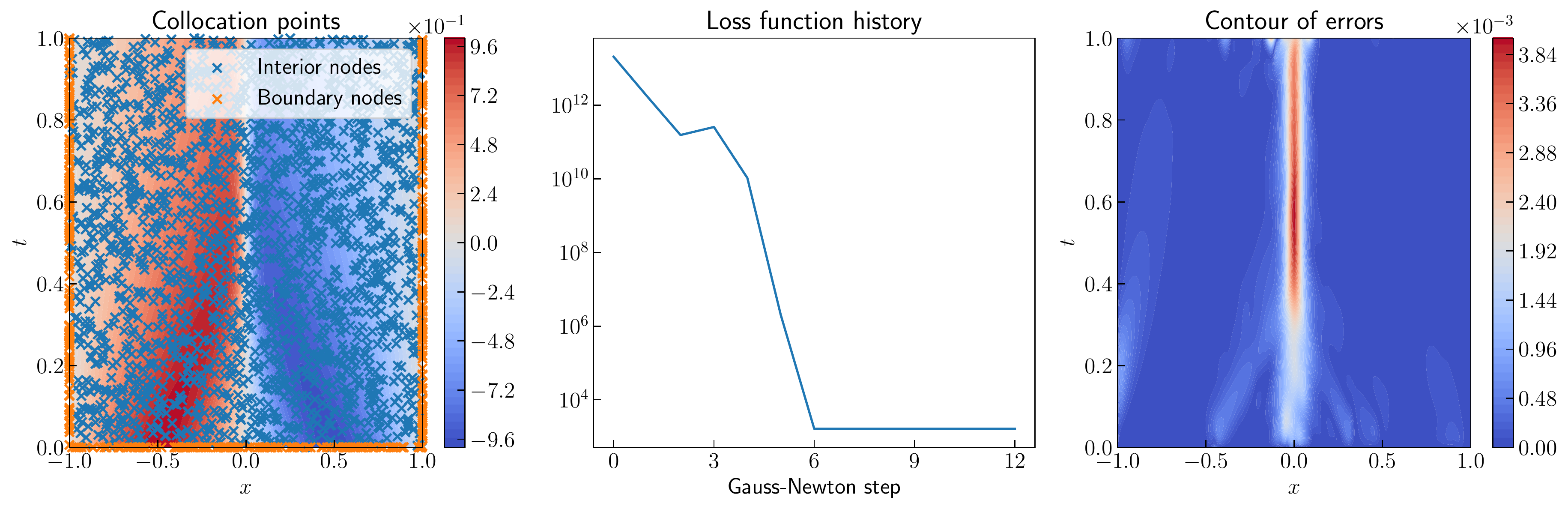}
    \put(14,-1){\scriptsize (a)}
    \put(50,-1){\scriptsize (b)}
    \put(82,-1){\scriptsize (c)}
    \end{overpic}\\
    \vspace{4ex}
    \begin{overpic}[width=15cm]{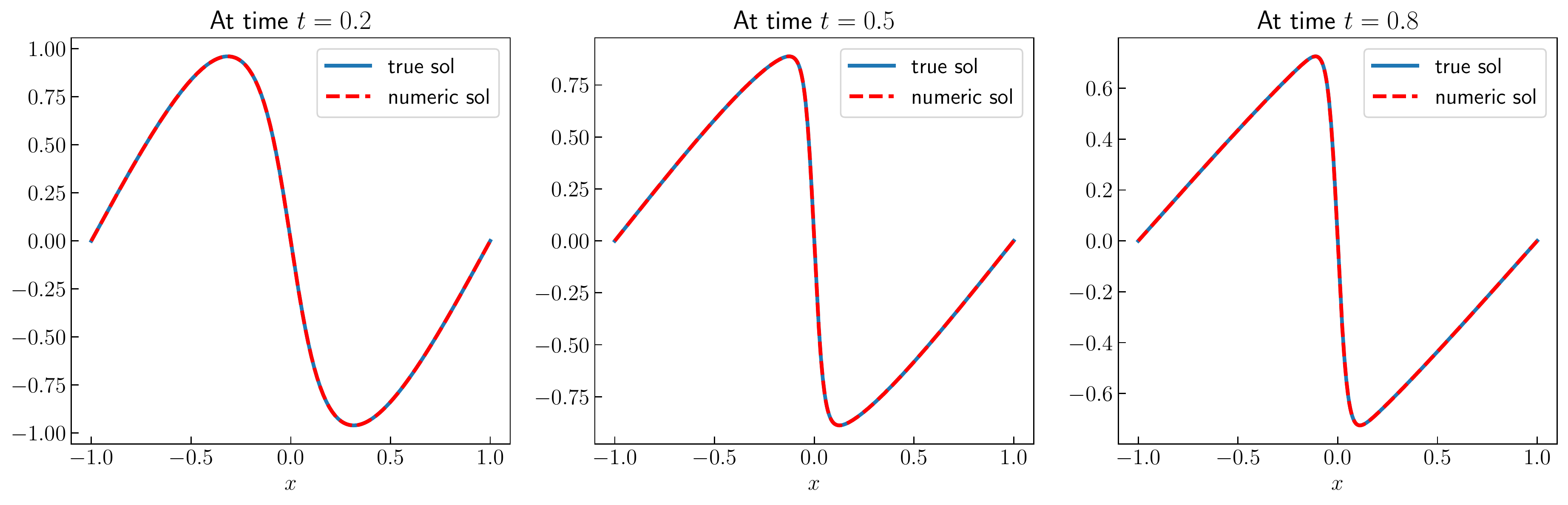}
    \put(14,-1){\scriptsize (d)}
    \put(50,-1){\scriptsize (e)}
    \put(82,-1){\scriptsize (f)}
    \end{overpic}
    \caption{Numerical results for Burgers equation \eqref{Burgers-proto-PDE}: (a)
    an instance of uniformly sampled collocation points in space-time over contours of the true solution; (b)  \bh{Gauss--Newton} iteration history; (c) contours of the pointwise
     error of the numerical solution; (d--f) time slices of the numerical and true
     solutions at $t=0.2,0.5,0.8$. \bh{An adaptive nugget term
    with global parameter $\eta = 10^{-10}$ was employed
    (see Appendix~\ref{app:Burgers-Nugget}).}}
    \label{fig: Burgers demonstration}
    \end{figure}

Next, we studied the  convergence properties  of our method as a function of $M$ as shown in Table \ref{table: Burgers convergence}. Here,  we varied $M$ with a fixed ratio of interior points,
$M_{\Omega}/M=5/6$. For each experiment  we ran $10$ steps of \bh{Gauss--Newton} starting from a Gaussian random initial guess.
Results were averaged over 10 realizations of the
random collocation points.
From the table, we observe that the error decreases very fast as $M$ increases, implying the convergence of our proposed algorithm.

Finally, we note that the accuracy of our method is closely tied to the choice
of the viscosity parameter $\nu$ and choosing a smaller value of $\nu$, which
in turn results in a sharper shock, can significantly reduce our accuracy.
This phenomenon is not surprising since a sharper shock corresponds to
the presence of shorter length and time scales in the solution;
these in turn, require a more careful choice of the kernel, as well as suggesting
the need to carefully choose the collocation points.

\begin{table}[ht]
\centering
\yc{
\begin{tabular}{lllll}
\hline
$M$                 & 600         & 1200       & 2400    & 4800       \\ \hline
$L^2$ error      & 1.75e-02 & 7.90e-03 & 8.65e-04 & 9.76e-05 \\
 $L^\infty$ error  & 6.61e-01 & 6.39e-02  & 5.50e-03 & 7.36e-04 \\ \hline
\end{tabular}
}
\caption{Space-time $L^2$ and $L^{\infty}$ solution errors for the Burgers' equation \eqref{Burgers-proto-PDE} for different choices of $M$ with kernel parameters $\sigma = (20,3)$ and \bh{global nugget parameter $\eta = 10^{-5}$ if $M \leq 1200$ and $\eta = 10^{-10}$ otherwise
(see Appendix~\ref{app:Burgers-Nugget})}. Results were averaged over 10 realizations of the
random collocation points. \yc{The maximum Gauss-Newton iteration was 30.}}
\label{table: Burgers convergence}
\end{table}

\subsubsection{Eikonal PDE}\label{sec:eikonal-PDE}
We now  consider the regularized Eikonal equation in $\Omega = [0,1]^2$:
\begin{equation}
  \label{reg-Eikonal}
  \left\{
    \begin{aligned}
      |\nabla u(\bx)|^2 &= f(\bx)^2+\epsilon \Delta u(\bx), && \forall
      \bx \in \Omega\,, \\
      u(\bx) & = 0, && \forall
      \bx \in \partial\Omega\, ,
    \end{aligned}
    \right.
\end{equation}
with $f = 1$ and $\epsilon=0.1$.
We write this PDE in the form of \eqref{assumptions-on-mP-mB-PDE} with
$Q = 4$ and $Q_b = 1$ and
linear operators $L_1(u)=u, L_2(u)=\partial_{x_1} u, L_3(u)=\partial_{x_2} u, L_4(u)=\Delta u$ and nonlinear map $P(v_1,v_2,v_3,v_4) = v_2^2+v_3^2-\epsilon v_4$ in the interior of
$\Omega$ and define the boundary operator identically to Subsection~\ref{sec:Burgers-Revisited}.
We further eliminate the  nonlinear constraints, as outlined in Subsection~\ref{sec:eliminating-constraints},
by solving $v_4$ in terms of $v_2,v_3$.
To obtain a ``true'' solution, for the purpose of estimating errors,
we employ the transformation $u = -\epsilon\log v$, which leads to the linear PDE $fv-\epsilon^2\Delta v = 0$; we
solve this by a highly-resolved FD method\yc{; we used $2000$ uniform grid points in each dimension of the domain leading to the finest mesh that our hardware could handle}.

As before, we began with \yc{$M=2400$ collocation points with $M_{\Omega} = 2160$} interior
points. An instance of these collocation points along with contours of the true
solution are shown in Figure~\ref{fig:Eikonal}(a). We employed a
nugget term  as \bh{outlined in Appendix~\ref{app:Eikonal-Nugget}} and used the Gaussian kernel,
as in Subsection~\ref{sec:elliptic-PDE} with $\sigma = M^{-1/4}$. Finally we used the
 \bh{Gauss--Newton} algorithm to find the minimizer. We show the
 convergence history of \bh{Gauss--Newton} in Figure~\ref{fig:Eikonal}(b), observing
 that six iterations were sufficient for convergence. In Figure~\ref{fig:Eikonal}(c)
 we show the error contours of the obtained numerical approximation, which appeared
 to be qualitatively different to Figure~\ref{fig:nonlinear elliptic, demon}(c)
 in that the errors \bh{were larger   in the middle of the domain as well as  close to the
 boundary. }

\begin{figure}[ht]
    \centering
    \begin{overpic}[width=15cm]{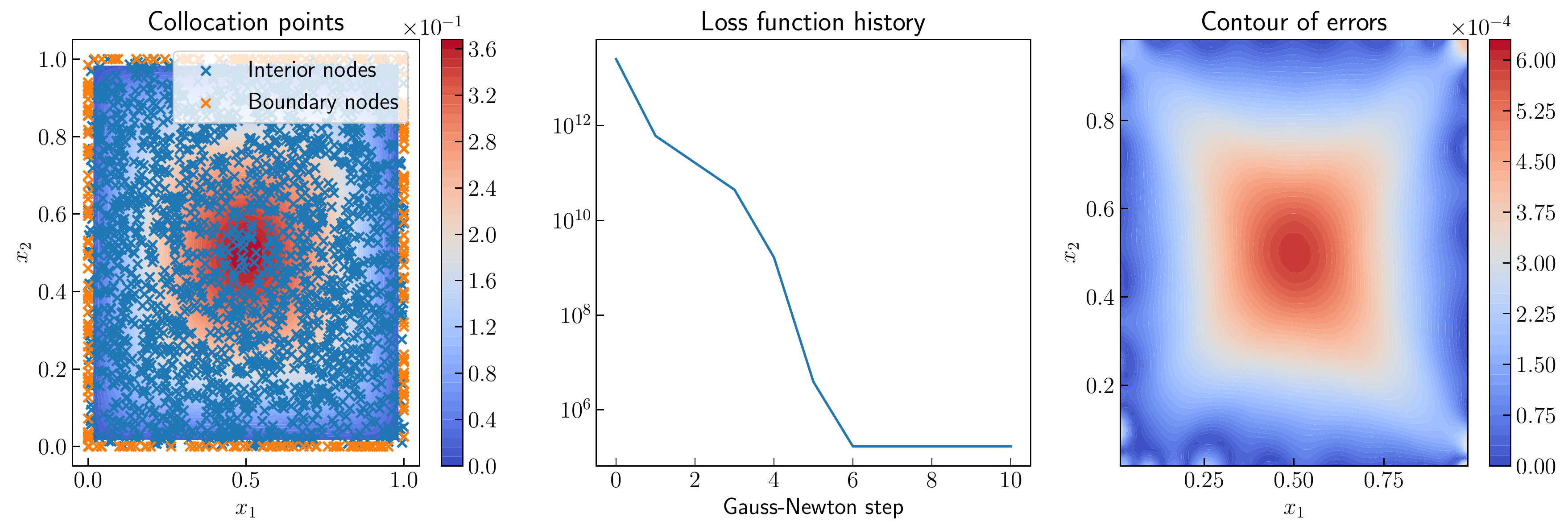}
    \put(14,-1){\scriptsize (a)}
    \put(50,-1){\scriptsize (b)}
    \put(82,-1){\scriptsize (c)}
    \end{overpic}
    \caption{Numerical results for the regularized Eikonal equation \eqref{reg-Eikonal}: (a) an instance of uniformly sampled collocation points over contours of the true solution; (b) \bh{Gauss--Newton} iteration history; (c) contour of the solution error.
    \bh{An adaptive nugget term with $\eta =10^{-10}$ was used (see Appendix~\ref{app:Eikonal-Nugget}).}
    }
    \label{fig:Eikonal}
\end{figure}

Next we performed a convergence study by varying  $M$ and computing $L^2$ and $L^{\infty}$ errors as reported  in Table \ref{table: Eikonal equation, error}
by choosing the lengthscale parameter of the kernel $\sigma = M^{-1/4}$. \bh{We used the
same  nugget terms as in the Burgers' equation (see Appendix~\ref{app:Eikonal-Nugget})}.
Results were averaged over 10 realizations of the
random collocation points. \yc{Once again we observe clear improvements in accuracy as the number of collocation points increases.}

\begin{table}[ht]
\centering
\yc{
\begin{tabular}{lllll}
\hline
$M$                 & 300       &600     &1200      & 2400       \\ \hline
$L^2$ error      & 1.01e-01 & 1.64e-02 & 2.27e-04 & 7.78e-05 \\
 $L^\infty$ error  & 3.59e-01 & 7.76e-02  & 3.22e-03 & 1.61e-03 \\ \hline
\end{tabular}
}
\caption{Numerical results for the regularized Eikonal equation \eqref{reg-Eikonal}. Uniformly random collocation points were sampled with different $M$ and with fixed ratio $M_\Omega=0.9M$. An adaptive nugget term was used with \bh{global nugget parameter $\eta = 10^{-5}$ if $M \leq 1200$ and $\eta = 10^{-10}$ otherwise (see Appendix~\ref{app:Eikonal-Nugget})}, together with a Gaussian kernel with
lengthscale parameter $\sigma = M^{-1/4}$.
Results were averaged over 10 realizations of the
random collocation points. \yc{The maximum Gauss-Newton iteration was  20.}}
\label{table: Eikonal equation, error}
\end{table}

\section{Solving Inverse Problems}\label{sec:Inverse-Problems}
We now turn our attention to solution  of IPs and show that
the methodology of Subsections~\ref{sec:nonlinear-PDE-GP}--\ref{sec:implementation}
can readily be extended to solve such problems with small modifications. We descibe the
abstract setting of our IPs in Subsection~\ref{sec:IP-setup} leading to Corollary~\ref{cor:IP-representer-theorem}
which is analogous to  Proposition~\ref{prop:nonlinear-representer-optimization} and Corollary~\ref{cor:PDE-representer-theorem}
in the setting of IPs. Subsection~\ref{sec:dealing-with-constraints-IP} outlines our approach for
dealing with PDE constraints in IPs and highlights the differences in this setting in comparison to the PDE
setting described in Subsection~\ref{sec:deadling-with-constraints}. Subsection~\ref{sec:implementation-for-IPs}
further highlights the implementation differences between the PDE and IP settings while Subsection~\ref{sec:Darcy-flow}
presents a numerical experiment concerning an IP in  subsurface flow governed by the Darcy flow PDE.

\subsection{Problem Setup}\label{sec:IP-setup}
 Consider our usual setting of a nonlinear parameteric PDE in strong form
\begin{equation}
  \label{prototypical-parametric-PDE}
  \left\{
  \begin{aligned}
    \mP(u^\star; a^\star) (\bx) = f(\bx)
    ,& \qquad \forall \bx \in \Omega\, , \\
  \mB(u^\star; a^\star) (\bx) = g(\bx),& \qquad \forall \bx \in \partial \Omega\, .
  \end{aligned}
  \right.
\end{equation}
As before we assume the solution $u^\star$ belongs to a quadratic
Banach space $\mU$  while $a^\star$ is a parameter belonging to another
quadratic Banach space $\mA$.
Our goal in this subsection is to identify the parameter $a^\star$ from limited
observations of the solution $u^\star$. To this end, fix $\psi_1, \dots, \psi_I \in \mU^\ast$
and define
\begin{equation}
  \label{Gamma-definition}
   \bpsi := (\psi_1, \dots, \psi_I) \in (\mU^\ast)^{\otimes I}\, ,
\end{equation}
then our goal is to recover $a^\star$ given the noisy observations
\begin{equation}
  \label{noisy-observation-model}
  \bo = [\bpsi, u]  + \bm{\eps}, \qquad \bm{\eps}\sim \mN(0, \gamma^2I)\, .
\end{equation}

We propose to solve this inverse problem by modelling both $u^\star$ and $a^\star$ with canonical GPs
on the spaces $\mU, \mA$
with invertible covariance operators $\C: \mU^\ast \to \mU$ and $\S: \mA^\ast \to \mA$
respectively. We then condition these GPs to satisfy the PDE on collocation points
$\bx_1, \dots, \bx_M \in \overline{\Omega}$ as before and propose to
approximate  $u^\star, a^\star$ simultaneously via the
optimization problem:
\begin{equation}\label{inverse-problem-optimization-form-initial}
    \left\{
      \begin{aligned}
        & \minimize_{(u,a) \in \mU \times \mA}  &&   \| u \|_\mU^2  + \| a \|_\mA^2
        + \frac{1}{\gamma^2} \big| [\bpsi, u] - \bo \big|^2 &&  \\
        & \st  &&   \mP(u;a)(\bx_m) = f(\bx_m),  \qquad &&\text{for } m = 1, \dots, \Md\, ,\\
        &       &&  \mB(u;a)(\bx_m) = g(\bx_m),  \qquad &&\text{for } m = \Md+1, \dots, M\, ,\\
      \end{aligned}
      \right.
    \end{equation}
    where we used  subscripts to distinguish the quadratic norms on the spaces $\mU$ and $\mA$.

    \begin{remark}
    \bh{
    In light of Remark~\ref{rem:Bayesian-interpretation} we
    observe that \eqref{inverse-problem-optimization-form-initial} corresponds to
    imposing a prior measure on $u, a$ which assumes they are a priori independent.
    \as{It is straightforward to} introduce correlations between the solution $u$ and the parameter $a$
    by defining the prior measure directly on the product space $\mU \times \mA$.
    This perspective will then lead to an analogous optimization problem to
    \eqref{inverse-problem-optimization-form-initial} with the same constraints but with
    the functional
    \begin{equation*}
        \| (u, a) \|_{\mU \times \mA}^2 + \frac{1}{\gamma^2} | [\bpsi, u] - \bo |^2,
    \end{equation*}
    where we used $\| \cdot \|_{\mU \times \mA}$ to denote the RKHS norm of the  GP
    associated with  $\mU \times \mA$.
    }
    \end{remark}

    \begin{remark}
    \bh{
    We also note that the Bayesian interpretation of  \eqref{inverse-problem-optimization-form-initial} can be viewed as an extension of
    gradient matching \cite{calderhead2009accelerating, liang2008parameter} from
    ODEs to PDEs. Indeed, gradient matching simultaneously approximates the unknown parameters and the
    solution of an ODE system using a joint GP prior and imposes the ODE as a
    constraint at finitely many time steps.
    }
    \end{remark}

 We make analogous assumptions on the form of the operators $\mP, \mB$ as in
 Assumption~\ref{PDE-assumption} but this time also involving the parameters $a$:

\begin{assumption}\label{Inverse-problem-assumption}
There exist bounded and linear operators $L_1, \dots, L_{Q} \in \mL( \mU; C( \Omega))$ in which $L_{1}, \dots, L_{Q_b} \in \mL( \mU; C( \partial \Omega))$ for some $1\leq Q_b \leq Q$,
and
 $\tL_{1}, \dots, \tL_{J} \in \mL(\mA; C(\overline{\Omega}))$
together with  maps $P: \R^{Q+J} \to \R$ and $B: \R^{Q_b+J} \to \R$, \bh{which may be nonlinear,}
so that $\mP(u;a)( \bx)$
and $\mB(u;a) (\bx)$ can be written as
\begin{equation}
  \label{assumptions-on-mP-mB-Inverse-Problem}
  \begin{aligned}
  \mP(u;a)(\bx) & = P\big( L_1(u)(\bx), \dots, L_{Q}(u)(\bx); \tL_1(a)(\bx), \dots \tL_J(a)(\bx) \big),
  && \forall \bx \in \Omega\, , \\
  \mB(u;a)(\bx) & = B\big( L_{1}(u)(\bx), \dots, L_{Q_b}(u)(\bx); \tL_1(a)(\bx), \dots \tL_J(a)(\bx) \big),
  && \forall \bx \in \partial\Omega\, .
\end{aligned}
\end{equation}
\end{assumption}

Similarly to the $L_q$, the $\tL_j$ operators are  also linear differential operators in case of prototypical
PDEs  while the maps $P, B$ remain as simple algebraic nonlinearities. Let us briefly consider
an IP in subsurface flow and verify the above assumption.

\begin{runningexample}{DF}[Darcy flow IP]
\label{example-darcy-flow}
  Let $\Omega = (0,1)^2$ and consider the Darcy flow PDE with Dirichlet boundary conditions
  \begin{equation*}
    \left\{
    \begin{aligned}
      - \Div \left(   \exp(a) \nabla u \right) (\bx)  & = f (\bx),  && \bx \in \Omega\, , \\
          u(\bx) & = 0,   && \bx \in \partial \Omega\, .
  \end{aligned}
  \right.
\end{equation*}
We wish to approximate $a \in C^1(\overline{\Omega})$ given noisy pointwise observations of $u$ at a set of points
$\tilde{\bx}_{1}, \dots, \tilde{\bx}_I$. Thus, we take $\psi_i = \updelta_{\tilde{\bx}_i}$. By expanding the
PDE we obtain
\begin{equation*}
  - \Div \left(   \exp(a) \nabla u \right) = - \exp(a) \left(  \nabla a \cdot \nabla u + \Delta u \right)\, ,
\end{equation*}
and so we simply choose $Q = 3$, $Q_b = 1$ and $J=2$ with the linear operators
\begin{equation*}
  L_1(u) = u, \quad  L_2(u) = \nabla u, \quad
  L_3(u) = \Delta u, \quad
  \tL_1(a) = a, \quad \tL_2(a) = \nabla a\, .
\end{equation*}
We can then satisfy Assumption~\ref{assumptions-on-mP-mB-Inverse-Problem}
by taking
\begin{equation}
\label{eqn: Darcy flow variables}
  P(v_1, \bv_2, v_3; v_4, \bv_5) = - \exp(v_4) \left( \bv_5 \cdot \bv_2 + v_3 \right), \qquad
  B(v_1; v_4, \bv_5) = v_1\, ,
\end{equation}
where we have slightly abused notation by letting $L_2, \tL_2$  be vector valued and
defining $P,B$ to take  vectors as some of their inputs.
\end{runningexample}

As in Subsection~\ref{sec:nonlinear-PDE-GP} we now define
the  functionals
$\phi_{m}^{(q)} \in \mU^\ast$ for $m=1, \dots, M$ and $q  = 1, \dots, Q$ according to \eqref{phi-definition}
and  \eqref{phi-PDE-def}
with $N = MQ_b+M_{\Omega}(Q-Q_b)$.
Similarly we define the  functionals $\tphi_{m}^{(j)} \in \mA^\ast$  as
\begin{equation}\label{varphi-definition-inverse-problem}
  \tphi_{m}^{(j)} := \delta_{\bx_m} \circ \tL_j, \qquad  \text{for} \qquad m = 1, \dots, M, \text{ and }
  j = 1, \dots, J\, ,
\end{equation}
together with the vectors
\begin{equation}
  \label{tilde-phi-definition}
  \tbphi^{(j)} = (\tphi_1^{(j)}, \dots \tphi_M^{(j)} ) \in ( \mA^\ast)^{\otimes M} \quad \text{and} \quad
  \tbphi= (\tbphi^{(1)}, \dots, \tbphi^{(J)} ) \in  (\mA^\ast)^{\otimes \tN}\, ,
\end{equation}
where $\tN := M J$.
Similarly to  \eqref{F-map-PDE-definition} define  the map
\begin{equation*}
  \begin{aligned}
    \big( &F(  [\bphi, u]_\mU ; [\tbphi, a]_\mA ) \big)_m := \\
    &\left\{
    \begin{aligned}
      &P( [\phi_m^{(1)}, u]_\mU,  \dots, [\phi_m^{(Q)}, u]_\mU;
      [ \tphi_{m}^{(1)}, a]_\mA, \dots, [\tphi_{m}^{(J)}, a]_\mA   )  &&\text{if }
      m \in \{1, \dots, \Md\}\, , \\
      &B([\phi_m^{(1)}, u]_\mU,  \dots, [\phi_m^{(Q_b)}, u]_\mU;
      [\tphi_{m}^{(1)},a ]_\mA, \dots, [\tphi_{m}^{(J)}, a]_\mA)
      &&\text{if } m \in \{ \Md + 1, \dots, M\}\, ,
    \end{aligned}
    \right.
  \end{aligned}
  \end{equation*}
  where  we used subscripts to distinguish the duality pairings between $\mU, \mU^\ast$
  and the pairing between $\mA, \mA^\ast$. With this new notation we can finally
 rewrite \eqref{inverse-problem-optimization-form-initial} in the  familiar form
\begin{equation}\label{inverse-problem-optimization-form-standard}
    \left\{
      \begin{aligned}
        & \minimize_{(u,a) \in \mU \times \mA}
        \quad  && \| u\|_\mU^2  +  \| a\|_\mA^2
        + \frac{1}{\gamma^2} | \bpsi(u) - \bo |^2   \\
        & \st \quad   && F \big([\bphi, u]_\mU; [\tbphi, a]_\mA \big) = \by,
      \end{aligned}
      \right.
\end{equation}
with the PDE data vector $\by \in \R^M$ defined in \eqref{y-definition}.

We can now apply Proposition~\ref{prop:nonlinear-representer-optimization} with the canonical
GP defined
on the product space $\mU \times \mA$ and with a block diagonal covariance operator $\mK \otimes \S$ to obtain a representer theorem for minimizer of
\eqref{inverse-problem-optimization-form-standard}. We state this result as a corollary below after
introducing some further notation.
Define the vector
$\bvphi = ( \vphi_1, \dots, \vphi_{N + I}) \in ( \mU^\ast)^{\otimes (I + N)},$
with entries \footnote{Note that we are concatenating the $I$ measurement functionals defining the data for the IP
with the $N$ functionals used to define the PDE at the collocation points.}
\begin{equation*}
  \vphi_n := \left\{
    \begin{aligned}
      &\psi_n,  &&\text{if }n = 1, \dots, I, \\
      &\phi_{n - I},  &&\text{if }n = I+1, \dots, I+N,
    \end{aligned}
    \right.
\end{equation*}
as well as  the matrices $\Theta \in \R^{(I + N) \times (I + N)}$ and $\tTheta \in \R^{\tN \times \tN}$ with entries
\begin{equation*}
  \Theta_{i,n} = [ \vphi_i, \C \vphi_n ]_{\mU} \qquad \text{and} \qquad
  \tTheta_{i,n} = [\tphi_i, \S \tphi_n ]_{\mA}.
\end{equation*}
As in \eqref{def:gamblets} we define the
 gamblets
   \begin{equation*}
     \chi_i = \sum_{n=1}^{N+I} \Theta_{i, n}^{-1} \C \vphi_n, \qquad \text{and} \qquad
     \tchi_i = \sum_{n=1}^{\tN} \tTheta_{i, n}^{-1} \S \tphi_n.
   \end{equation*}
Then Proposition~\ref{prop:nonlinear-representer-optimization} yields the following corollary.

 \begin{corollary}\label{cor:IP-representer-theorem}
   Suppose Assumption~\ref{Inverse-problem-assumption} holds and that the
   covariance operators $\C$ and $\S$ as well as the matrices $\Theta$ and $\tTheta$
   are invertible.
   Then   $(u^\dagger, a^\dagger) \in \mU \times \mA$ is a minimizer of \eqref{PDE-optimization-form-initial} if and only if
   \begin{equation*}
     u^\dagger = \sum_{n=1}^{I + N} z^\dagger_n \chi_n, \qquad \text{and} \qquad a^\dagger = \sum_{n=1}^{\tN}
     \tz^\dagger_n \tchi_n,
   \end{equation*}
   where the vectors $\bz^\dagger, \tbz^\dagger$ are minimizers of
   \begin{equation}\label{IP-nonlinear-representer-optimization}
         \left\{
        \begin{aligned}
          & \minimize_{(\bz, \tbz) \in (\R^{I + N} \times \R^{\tN}) }
          &&    \bz^T \Theta^{-1} \bz
          +  \tbz^T \tTheta^{-1} \tbz
          + \frac{1}{\gamma^2}  | \Pi^I \bz - \bo |^2 \\
        & \st  && F( \Pi_N \bz; \tbz) = \by,
      \end{aligned}
      \right.
    \end{equation}
    where $\Pi^I: \R^{I + N} \to \R^I$ is the projection that extracts the first $I$ entries
    of a vector while
    $\Pi_N: \R^{I + N} \to \R^{N}$ is the projection  that extracts the
    last $N$ entries.
  \end{corollary}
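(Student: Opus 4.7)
The plan is to reduce Corollary~\ref{cor:IP-representer-theorem} directly to Proposition~\ref{prop:nonlinear-representer-optimization} applied on the product space $\mU \times \mA$ equipped with the block-diagonal covariance $\C \oplus \S$. Since the prior factors as a tensor product, the quadratic form $\|u\|_\mU^2 + \|a\|_\mA^2$ is exactly the squared norm on $\mU \times \mA$ induced by $\C \oplus \S$, so the objective in \eqref{inverse-problem-optimization-form-standard} has the same structure as in \eqref{generic-nonlinear-constrained-optimization-problem}.

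The first step is to identify the measurement functionals. On the product space consider the $(I+N)$ functionals acting on the $u$ component, namely $\bvphi = (\bpsi, \bphi)$, together with the $\tN$ functionals $\tbphi$ acting on the $a$ component. Group these as a single vector of functionals on $\mU \times \mA$. Under this identification, the penalty term $\tfrac{1}{\gamma^2}|[\bpsi,u] - \bo|^2$ in \eqref{inverse-problem-optimization-form-standard} is precisely $\tfrac{1}{\gamma^2}|G(\cdot) - \bo|^2$ with $G$ the linear projection that keeps the first $I$ coordinates of $[\bvphi, u]$, i.e.\ $G = \Pi^I$ in the notation of the corollary. Likewise the equality constraint $F([\bphi,u]_\mU;[\tbphi,a]_\mA)=\by$ has the form required by \eqref{generic-nonlinear-constrained-optimization-problem} once we define the constraint map on $[\bvphi,u] \oplus [\tbphi,a]$ by first projecting to the $\bphi$-coordinates (via $\Pi_N$) and then applying $F$.

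Next, mirror the two-level reformulation used in the proof of Proposition~\ref{prop:nonlinear-representer-optimization}. Fix $\bz \in \R^{I+N}$ and $\tbz \in \R^{\tN}$ with $F(\Pi_N \bz;\tbz)=\by$, and minimize $\|u\|_\mU^2 + \|a\|_\mA^2$ subject to the linear constraints $[\bvphi, u]_\mU = \bz$ and $[\tbphi, a]_\mA = \tbz$. Because the covariance operator is block-diagonal, this inner problem decouples into independent optimal-recovery problems for $u$ and $a$, so Proposition~\ref{prop:variational-conditional-mean-characterization-linear-case} applies separately: the minimizer in $u$ is $\sum_{n=1}^{I+N} z_n \chi_n$ with minimum norm $\bz^T \Theta^{-1} \bz$, and the minimizer in $a$ is $\sum_{n=1}^{\tN} \tz_n \tchi_n$ with minimum norm $\tbz^T \tTheta^{-1} \tbz$. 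Substituting back yields the outer problem \eqref{IP-nonlinear-representer-optimization}, and the representation of $(u^\dagger, a^\dagger)$ follows.

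The only subtle point is bookkeeping: verifying that the concatenated kernel matrix on $\mU \times \mA$ is block-diagonal (so that no cross terms between $\bz$ and $\tbz$ appear in the objective), and checking that the data functional $\bpsi$ is correctly placed in the $\Theta$ block (not in $\tTheta$) so that the projections $\Pi^I$ and $\Pi_N$ extract exactly the intended coordinates. Invertibility of $\Theta$ and $\tTheta$, which is assumed, ensures that the gamblets are well-defined and that the inner decoupled problems admit unique solutions, so the equivalence between minimizers of \eqref{inverse-problem-optimization-form-standard} and of \eqref{IP-nonlinear-representer-optimization} is an ``if and only if'', exactly as in Proposition~\ref{prop:nonlinear-representer-optimization}. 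No new analytical difficulty arises beyond this reindexing.
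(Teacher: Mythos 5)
Your proposal is correct and follows essentially the same route as the paper: the paper likewise obtains the corollary by applying Proposition~\ref{prop:nonlinear-representer-optimization} to the canonical GP on the product space $\mU \times \mA$ with block-diagonal covariance, which is exactly your reduction. The additional bookkeeping you supply (decoupling of the inner optimal-recovery problems, block-diagonality of the concatenated kernel matrix, and the placement of $\bpsi$ in the $\Theta$ block) is what the paper leaves implicit, and it is carried out correctly.
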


    \subsection{Dealing with the Constraints}\label{sec:dealing-with-constraints-IP}
    The equality constraints in \eqref{IP-nonlinear-representer-optimization} can be dealt with using
    the same strategies as in Subsection~\ref{sec:deadling-with-constraints}. Indeed, as in
    Subsection~\ref{sec:relaxed-constraints}, we can
    readily relax these constraints to obtain the optimization problem
    \begin{equation}\label{IP-relaxed-representer}
        \begin{aligned}
          & \minimize_{(\bz, \tbz) \in (\R^{I + N} \times \R^{\tN}) }
          &&    \bz^T \Theta^{-1} \bz
          +  \tbz^T \tTheta^{-1} \tbz
          + \frac{1}{\gamma^2}  | \Pi^I \bz - \bo |^2 + \frac{1}{\beta^2} \left|  F( \Pi_N \bz; \tbz) - \by \right|^2,
      \end{aligned}
    \end{equation}
    for a small parameter $\beta^2> 0$. Elimination of the constraints
    as in Subsection~\ref{sec:eliminating-constraints} is slightly more delicate, but is sometimes
    possible. Suppose there exists
    a solution map $\hF: \R^{N + \tN - M} \times \R^{M} \to \R^{N + \tN}$ so that
    \begin{equation*}
        F( \Pi_N \bz; \tbz) = \by \quad \text{if and only if} \quad (\Pi_N \bz, \tbz) = \hF(\bw, \by) \qquad \text{for a unique }
        \bw \in \R^{N + \tN - M}.
    \end{equation*}
    Then solving \eqref{IP-nonlinear-representer-optimization} is equivalent to solving the unconstrained problem
      \begin{equation}\label{IP-unconstrained-representer}
        \begin{aligned}
          & \minimize_{ (\bv, \bw) \in \R^I \times \R^{N + \tN - M} }
          && ( \bv, \hF(\bw, \by) )
          \begin{bmatrix}
            \Theta^{-1} & 0 \\
            0 & \tTheta^{-1}
          \end{bmatrix}
          \begin{pmatrix}
            \bv \\
            \hF(\bw, \by)
          \end{pmatrix}
          + \frac{1}{\gamma^2}  | \bv - \bo |^2,
      \end{aligned}
    \end{equation}
    and setting $\Pi^I \bz^\dagger = \bv^\dagger$ and $(\Pi_N \bz^\dagger, \tbz) = \hF(\bw^\dagger, \by)$.

    \subsection{Implementation}\label{sec:implementation-for-IPs}
    Both of the problems \eqref{IP-relaxed-representer} and \eqref{IP-unconstrained-representer} can be solved
    using the same techniques outlined in Subsection~\ref{sec:implementation} except that we now have
    a higher dimensional solution space. Below we briefly describe the main differences between the  implementation of
    the PDE and IP solvers.

    \subsubsection{Constructing $\Theta$ and $\tTheta$}
    We propose to construct the matrices $\Theta, \tTheta$ using appropriate kernels
    $K$, for the solution $u$ of the PDE, and $\tK$, for the parameter $a$ identically to Subsection~\ref{sec:constructing-theta}.
    Our minimum requirements on $K, \tK$ is sufficient regularity for the pointwise constraints in \eqref{inverse-problem-optimization-form-initial} to be well-defined. Since we have limited and noisy data in the inverse
    problem setting, it is not possible for us to recover the exact solution $(u^\star, a^\star)$ in general
    and so the kernels $K, \tK$ should be chosen to reflect our prior assumptions on the unknown parameter
    and the solution of the PDE at that parameter value.

\begin{figure}[ht]
    \centering
    \begin{overpic}[width=15cm]{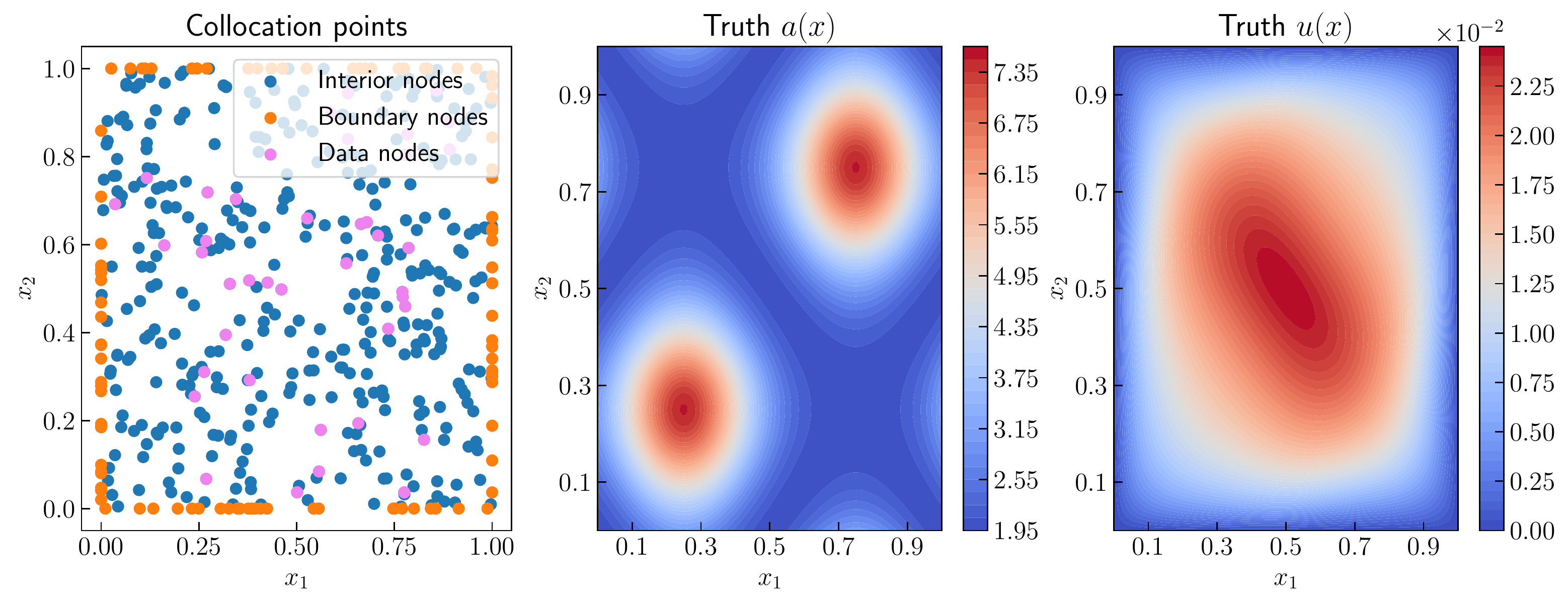}
    \put(14,-1){\scriptsize (a)}
    \put(50,-1){\scriptsize (b)}
    \put(82,-1){\scriptsize (c)}
    \end{overpic}\\
    \vspace{4ex}
    \begin{overpic}[width=15cm]{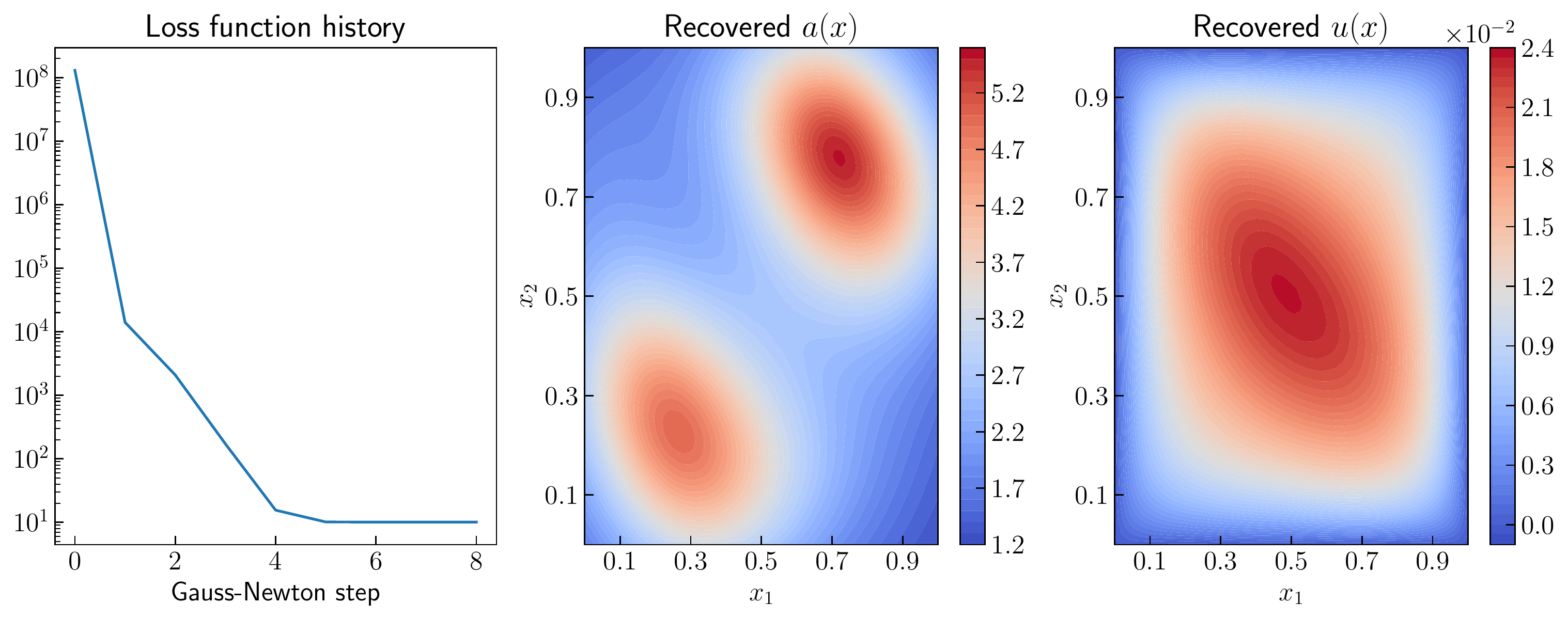}
    \put(14,-1){\scriptsize (d)}
    \put(50,-1){\scriptsize (e)}
    \put(82,-1){\scriptsize (f)}
    \end{overpic}
    \caption{Numerical results for the inverse Darcy flow: (a) an instance of uniformly sampled collocation points and data points; (d) \bh{Gauss--Newton} iteration history; (b) true $a$; (e) recovered $a$; (c) true $u$; (f) recovered $u$. \as{Adaptive diagonal regularization (``nugget'') terms
    were added to the kernel matrix, with parameters $\eta = \tilde \eta = 10^{-5}$ as outlined in Appendix~\ref{app:Darcy-Nugget}.}}
    \label{fig:Darcy flow}
\end{figure}

\subsection{Numerical Experiments for Darcy Flow}\label{sec:Darcy-flow}
In this subsection, we apply our method to an IP involving the  Darcy flow PDE.
We consider the IP outlined in Example \ref{example-darcy-flow}
with the true coefficient $a^\star(\bx)$ satisfying
\[\exp\Bigl(a^\star(\bx)\Bigr) = \exp\Bigl(\sin(2\pi x_1)+\sin(2\pi x_2)\Bigr)+\exp\Bigl(-\sin(2\pi x_1)-\sin(2\pi x_2)\Bigr)\, , \]
and the right hand side source term  is $f=1$. We randomly sampled $M=500$ collocation points with $M_\Omega = 400$ in the interior. From these $400$ interior points, we randomly chose $I = 40$ points
and observed the values of $u(\bx)$ at those points as the data for the IP.
The values of $u(\bx)$ for this purpose were generated by first solving the equation
with the true coefficient on a uniform grid and then using linear interpolation to get the solution at the observation points.
We further added independent Gaussian noise $\mathcal{N}(0,\gamma^2 I)$ with
noise standard deviation $\gamma = 10^{-3}$ to these observations.  In dealing with the nonlinear constraint shown in Example~\ref{example-darcy-flow}, we eliminated the variable $v_3$ using the relation in \eqref{eqn: Darcy flow variables}.

We chose Gaussian kernels for both $u$ and $a$ with the same lengthscale parameter $\sigma =0.2$ and \as{adaptive diagonal  (``nugget'') terms
    were added to the kernel matrices, with parameters   $\eta = \tilde \eta = 10^{-5},$
to regularize $\Theta$ and $\tTheta$ as outlined in Appendix~\ref{app:Darcy-Nugget}.} In Figure \ref{fig:Darcy flow} we show the experimental results for recovering both $a$ and $u$.
From the figure, we observe that the \bh{Gauss--Newton} iterations converged after 6 steps. Moreover, the recovered $a$ and $u$ are reasonably accurate, i.e. they capture the shape of the truth, given the limited amount of observation information available.

\section{Concluding Remarks}\label{sec:discussion}
We \as{have} introduced a kernel/GP framework for solving nonlinear PDEs and IPs centered around the idea of  approximating the solution of a given PDE with a MAP estimator of a GP conditioned on satisfying the PDE at a set of collocation points. Theoretically, we exhibited
a nonlinear representer theorem which finite-dimensionalizes the MAP estimation problem and
proved the convergence of the resulting solution towards the truth as the number of collocation points goes to infinity, under some regularity assumptions. Computationally, we demonstrated that the solution can be found by solving a finite-dimensional optimization problem with quadratic loss and nonlinear constraints. We presented two methods for dealing with the nonlinear constraints, namely the elimination and relaxation approaches. An efficient
variant of the \bh{Gauss--Newton}  algorithm was also proposed for solving the resulting unconstrained optimization problem, where an adaptive nugget term
was employed for regularization together with offline Cholesky factorizations of the underlying kernel matrices. We demonstrated that  the
proposed algorithm performs well in a wide range of prototypical nonlinear problems such as a nonlinear elliptic PDE, Burgers' equation, a regularized Eikonal equation, and the identification of the
permeability field  in Darcy flow.

While our theoretical results  established the existence of a solution for the finite dimensional optimization problem and the convergence of $u^\dagger$ to the truth, the uniqueness of the solution is not guaranteed and the convergence theorem does not imply convergence rates. In what follows,
we provide several additional discussions that hold the potential to address these two issues and point towards interesting future directions: in Subsection \ref{sec-uniqueness-optimality} we discuss how to get the uniqueness result if an appropriate condition is assumed.
In Subsection \ref{sec:error-estimates},  we lay a path to obtain rigorous error estimates followed by a discussion about
learning of hierarchical kernel parameters in Subsection \ref{sec:KernelFlow}.
Finally, in Subsection \ref{subsecnongaussian}, we connect the  framework of this article to MAP estimation for Bayesian  IPs with non-Gaussian priors.

\subsection{Uniqueness and Optimality of  Minimizers} \label{sec-uniqueness-optimality}
While our theoretical results in Subsection~\ref{sec:rep-theorems-and-constraints} characterize the minimizers of
nonlinearly constrained optimization problems in RKHSs they do not imply the  uniqueness
of such minimizers. The lack of convexity  due to the nonlinearity of $F$ makes it impossible to obtain
such uniqueness results, however,  a positive result can be obtained under appropriate assumptions
on the function $F$.
Recall  \eqref{PDE-nonlinear-representer-optimization},
and assume that the following condition holds
\begin{equation}\label{eqconditionF}
(\bz-\bz^\dagger)^T\Theta^{-1} \bz^\dagger\geq 0 \text{ for }F(\bz)=\by.
\end{equation}
Generally, this
condition is equivalent to the requirement that the  nonlinear manifold $ \mV:=\big\{v \in \mU\mid F\big([\bphi,v]\big)=F\big([\bphi,u^\star]\big)\big\}$
 lies on only one side of the tangent hyperplane supported  by $\bz^\dagger$ with $\Theta^{-1}$-normal direction $\bz^\dagger$.
Note that \eqref{eqconditionF} is always satisfied with an equality if the PDE is linear, i.e., $F$ is affine.
Then the following proposition states that the minimizer
 $u^\dagger$ of \eqref{PDE-optimization-form-initial}
is not only unique, it is also the minimax optimal approximation of
$u^\star$
in $\mV$ in the sense of optimal recovery \cite{micchelli1977survey}.

\begin{proposition}
Under condition \eqref{eqconditionF}  it holds true that:
\begin{enumerate}
\item $u^\dagger$ is unique.
\item  $\|v-u^\dagger\|^2\leq \|v\|^2-\|u^\dagger\|^2$ for any $v\in \mV$.
\item
$u^\dagger$ is one of the minimzers of the minimax loss $\inf_{u\in \mV } \sup_{v\in \mV, v\not=0} \frac{\|v-u\|}{\|v\|}$.
\end{enumerate}
\end{proposition}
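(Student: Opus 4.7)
My plan is to prove all three items from a single RKHS identity: for every $v \in \mU$, writing $\bz = [\bphi, v]$, one has
\begin{equation*}
\langle v, u^\dagger \rangle = \bz^T \Theta^{-1} \bz^\dagger,
\qquad \|u^\dagger\|^2 = (\bz^\dagger)^T \Theta^{-1} \bz^\dagger.
\end{equation*}
These follow from the gamblet representation $u^\dagger = \sum_{n=1}^N z^\dagger_n \chi_n$ of Corollary~\ref{cor:PDE-representer-theorem}, combined with the Gram identity $\langle \chi_i, \chi_j\rangle = \Theta^{-1}_{i,j}$, which is a routine consequence of $\chi_i = \sum_n \Theta^{-1}_{i,n}\K\phi_n$ and $\langle \K\phi_n, \K\phi_m\rangle = [\phi_n, \K\phi_m] = \Theta_{n,m}$. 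To extend the identity to arbitrary $v\in\mU$ (not just $v$ in the gamblet span), I will decompose $v = \sum_n z_n\chi_n + v_\perp$ with $v_\perp$ orthogonal to every $\chi_n$; since each $\K\phi_i$ lies in the gamblet span, $[\phi_i, v_\perp] = \langle \K\phi_i, v_\perp\rangle = 0$, so the coefficients of the gamblet part of $v$ are exactly the entries of $\bz$, and $\langle v_\perp, u^\dagger\rangle = 0$.

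With this identity in hand, item 2 reduces to the algebraic expansion
\begin{equation*}
\|v - u^\dagger\|^2 = \|v\|^2 - 2\langle v, u^\dagger\rangle + \|u^\dagger\|^2 = \|v\|^2 - \|u^\dagger\|^2 - 2(\bz - \bz^\dagger)^T \Theta^{-1} \bz^\dagger,
\end{equation*}
valid for every $v\in\mV$, to which condition \eqref{eqconditionF} applies to yield $\|v - u^\dagger\|^2 \leq \|v\|^2 - \|u^\dagger\|^2$. Item 1 is then immediate: if $u^\ddagger \in \mV$ is another minimizer then $\|u^\ddagger\| = \|u^\dagger\|$, forcing $\|u^\ddagger - u^\dagger\|^2 \leq 0$ and hence $u^\ddagger = u^\dagger$.

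For item 3, the upper bound $\sup_{v \in \mV,\,v\neq 0}\|v - u^\dagger\|/\|v\| \leq 1$ is immediate from item 2 since $\|v - u^\dagger\|^2/\|v\|^2 \leq 1 - \|u^\dagger\|^2/\|v\|^2 \leq 1$. To obtain the matching lower bound at an arbitrary $u \in \mV$, I will exploit that $\mV$ is unbounded: the functional $[\bphi, \cdot]$ has finite rank on the infinite-dimensional space $\mU$, so there exists $w \in \mU$ with $[\bphi, w] = 0$ and $\|w\| > 0$; since $F$ depends on $v$ only through $[\bphi, v]$, one has $v_0 + \lambda w \in \mV$ for every $\lambda \in \R$ and any $v_0\in\mV$. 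The reverse triangle inequality then gives $\|v_0 + \lambda w - u\|/\|v_0 + \lambda w\| \geq 1 - \|u\|/\|v_0 + \lambda w\| \to 1$ as $|\lambda| \to \infty$, whence $\sup_{v\in\mV, v\neq 0}\|v - u\|/\|v\| \geq 1$ for every $u\in\mV$. Combining bounds, $u^\dagger$ attains the infimum with minimax value $1$.

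The main obstacle is cleanly executing the gamblet decomposition and verifying the identity $\langle v, u^\dagger\rangle = \bz^T\Theta^{-1}\bz^\dagger$ for general $v\in\mU$, rather than only for $v$ in the gamblet span; once this is in place the remaining computations are a direct expansion of a squared norm. A secondary subtlety is that item 3 yields the trivial minimax value of $1$ precisely because $\mV$ is unbounded; this is consistent with the proposition asserting only that $u^\dagger$ is \emph{one of} the minimizers of the minimax loss rather than the unique one.
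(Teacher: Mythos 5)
Your proof is correct and follows essentially the same route as the paper's: the identity $\langle v, u^\dagger\rangle = \bz^T\Theta^{-1}\bz^\dagger$ you derive is the paper's orthogonal decomposition $v = \overline{v} + (v-\overline{v})$ onto the gamblet span in disguise, the expansion of $\|v-u^\dagger\|^2$ and the use of \eqref{eqconditionF} are identical, and both minimax arguments hinge on perturbing along a nonzero $v^\perp$ with $[\bphi, v^\perp]=0$. The only cosmetic differences are that you obtain the lower bound in item 3 via the reverse triangle inequality (valid for any $u\in\mV$) where the paper computes the ratio exactly for $u$ in the gamblet span, and you explicitly justify the existence of $v^\perp$ (finite rank of $[\bphi,\cdot]$ on an infinite-dimensional $\mU$), which the paper leaves implicit.
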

\begin{proof}
 For any $v\in \mV$, we denote by $\overline{v}=\sum_{n=1}^N [\phi_n,v]\chi_n$, the GP MAP  solution after observing the values $[\phi_n,v]$ for $1\leq n \leq N$. By definition, $\overline{v}\in \mV$ and
  $[\bphi,v-\overline{v}]=0$. Thus, $v-\overline{v}$ is orthogonal to the linear span of $\chi_n$ in the $\|\cdot\|$ norm. It follows that
 $\|v-u^\dagger\|^2=\|v-\overline{v}\|^2+\|\overline{v}-u^\dagger\|^2$. Furthermore writing $\bz=[\bphi,v]$, we have
\begin{equation*}
 \|\overline{v}-u^\dagger\|^2=\|\overline{v}\|^2+\|u^\dagger\|^2-2\bz^T\Theta^{-1}\bz^\dagger =
 \|\overline{v}\|^2-\|u^\dagger\|^2- 2 (\bz-\bz^\dagger)^T\Theta^{-1}\bz^\dagger \leq \|\overline{v}\|^2-\|u^\dagger\|^2,
\end{equation*}
 where we used the relation $\|u^\dagger\|^2 = (\bz^\dagger)^T\Theta^{-1}\bz^\dagger$ and the assumed condition \eqref{eqconditionF}.
 Thus, we deduce that $\|v-u^\dagger\|^2\leq \|v-\overline{v}\|^2 + \|\overline{v}\|^2-\|u^\dagger\|^2 = \|v\|^2 - \|u^\dagger\|^2$ according to the orthogonality. This leads to the second point of the proposition, and the first point follows directly.

 The proof of the third point is similar to that of \cite[Thm.~18.2]{owhadi2019operator}. Here we shall prove that for the given minimax loss, the optimal value is 1 and the minimizer is $\sum_{n=1}^N z_n\chi_n$ for any $\bz$ satisfying $F(\bz)=\by$. To achieve this, first observe that the loss is always less or equal to 1 because for $v\in \mV$, $\|v-\overline{v}\|^2/ \|v\|^2\leq 1$ and $\overline{v} \in \mV$. Thus, the optimal value is not larger than 1.

 Moreover, let $v^\perp$ be a nonzero element in $\mU$ such that $[\bphi, v^\perp]=0$. Consider $v_c = c v^\perp+\sum_{n=1}^N z_n\chi_n $ for any $\bz$ satisfying $F(\bz)=\by$.
 Taking $u = \sum_{n=1}^N z_n\chi_n$, we have that
\[ \frac{\|v_c-u\|^2}{\|v_c\|^2}=\frac{c^2\|v^\perp\|^2}{c^2\|v^\perp\|^2+\|u\|^2}\, ; \]
 this term converges towards $1$ as $c \rightarrow \infty$. Thus, we get that the optimal value is exactly 1 and $u = \sum_{n=1}^N z_n\chi_n$ are minimizers.
\end{proof}

Note that the above proposition implies the uniqueness of the minimizer $u^\dagger$ but
does not imply the uniqueness of the solution $u^\star$. It is also interesting to observe that
condition~\eqref{eqconditionF} only involves the nonlinear map $F$ and not the differential operators defining the PDE.

  \subsection{Error Estimates}\label{sec:error-estimates}
  Here we present rigorous error estimates  in the PDE setting of Subsection~\ref{sec:PDEs} by bounding
  the error between the minimizer $u^\dagger$ of \eqref{PDE-optimization-form-initial} and the
  true solution $u^\star$. The main idea behind our error bounds is to connect the conditional covariance of
  the underlying GP to the pointwise error of  the conditional MAP estimator.

 In the setting of Section \ref{sec:rep-theorems-and-constraints},
 consider  $\ker(\bphi):= \{ u \in \mU  | [\bphi,u] = 0\}$. Then the conditional covariance operator
 $\C_\bphi$ of Proposition \ref{prop:conditional-GP-with-linear-measurements}
 coincides with  the short of the
    operator $\C$ to $\ker(\bphi)$ \cite{anderson1975shorted, owhadi2015conditioning}: this is a symmetric and positive
    operator from $\mU^\ast$ to $\mU$ identified via
    \begin{equation}\label{eqkphi}
      [ \varphi , \C_\bphi \varphi ]  =
      \inf_{\overline{\varphi} \in \text{span }  \bphi } \langle \varphi - \overline{\varphi}, \varphi - \overline{\varphi} \rangle_{\ast} =
      \inf_{\overline{\varphi} \in \text{span } \bphi } [ \varphi - \overline{\varphi}, \C (\varphi - \overline{\varphi})], \qquad \forall \varphi \in \mU^\ast,
    \end{equation}
    where we used the shorthand notation $\text{span } \bphi = \text{span} \{ \phi_1, \dots, \phi_N\}$. This identification holds true even when the $\phi_n$ are not linearly independent.
 With the notion of the shorted operator at hand
we can now present an abstract error bound on the action of dual elements on
 $u^\dagger$ and $u^\star$.

\begin{proposition}\label{thmerrorest}
Let $\mH$ be a Banach space endowed with a quadratic norm such that  the embedding $\mU\subset \mH$ is compact and such that $\mH^\ast \subset \mU^\ast$ where $\mH^\ast$ is defined using the same duality pairing as for $\mU^\ast$.
  Let $u^\dagger=\sum_{n=1}^N z_n^\dagger \chi_n$ be a minimizer of \eqref{PDE-optimization-form-initial} approximating
  the unique solution $u^\star$ of \eqref{prototypical-PDE}.
  Fix $\varphi \in \mH^\ast$ and  define  $\bphi$  as in \eqref{phi-PDE-def}. Then it holds true that
  \begin{equation}\label{eqweak}
    \big| [\varphi, u^\dagger] - [\varphi, u^\star] \big| \le \sigma(\varphi) \| u^\star\|
    + \epsilon \|\varphi\|_{\mH^\ast}  ,
  \end{equation}
  where  $\sigma^2(\varphi) := \K(\varphi,\varphi) -  \K(\varphi,\bphi)\K(\bphi,\bphi)^{-1}\K(\bphi,\varphi)$ and
  \begin{equation*}
    \epsilon^2:= \left\{
    \begin{aligned}
       & \max_{\bz \in \R^N}  && ( \bz - \bz^\dagger )^T A (\bz - \bz^\dagger) \\
       & \st  && F(\bz)  = \by, \\
       & && \bz^T \Theta^{-1} \bz \le \| u^\star\|^2.
    \end{aligned}
    \right.
  \end{equation*}
  where $A$ is the $N\times N$ positive symmetric matrix with entries   $A_{i,j}=\langle \chi_i,\chi_j\rangle_\mH$. Furthermore, it also holds true that
  \begin{equation}\label{eqstrong}     \|u^\dagger-u^\star\|\leq \inf_{\phi \in  \mathrm{span}\bphi}\|u^\star-\C \phi\|+\epsilon\,.
  \end{equation}
\end{proposition}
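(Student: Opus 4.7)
The plan is to introduce the auxiliary element $v^\star := \sum_{n=1}^N z_n^\star \chi_n$ with $\bz^\star := [\bphi, u^\star]$, namely the GP posterior mean of $\xi \sim \mN(0,\K)$ conditioned on $[\bphi, \xi] = \bz^\star$, and then work with the decomposition $u^\dagger - u^\star = (u^\dagger - v^\star) + (v^\star - u^\star)$. The first summand lives in $\mathrm{span}(\chi_n)$ and will be controlled by $\epsilon$; the second summand is RKHS-orthogonal to $\mathrm{span}(\chi_n)$ and will produce either the $\sigma(\varphi)$ term in \eqref{eqweak} or the $\inf$ term in \eqref{eqstrong}.

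Two structural facts underpin the argument. First, $v^\star$ is feasible for \eqref{PDE-optimization-form-standard}: the biorthogonality $[\phi_m,\chi_n]=\delta_{mn}$ gives $[\phi_m,v^\star] = z_m^\star = [\phi_m,u^\star]$, hence $F([\bphi,v^\star]) = F([\bphi,u^\star]) = \by$; combined with $\|v^\star\|^2 = (\bz^\star)^T \Theta^{-1} \bz^\star$, the minimality of $u^\dagger$ yields $\|u^\dagger\| \le \|v^\star\| \le \|u^\star\|$. Second, for any $w = \sum c_n \chi_n$ one has $\K^{-1} w \in \mathrm{span}\,\bphi$, so combined with $[\phi_n, v^\star - u^\star] = 0$ this delivers $\langle w, v^\star - u^\star \rangle = 0$; thus $v^\star$ is the $\mU$-orthogonal projection of $u^\star$ onto $\K(\mathrm{span}\,\bphi)$, which by Pythagoras gives both $\|v^\star - u^\star\|^2 = \|u^\star\|^2 - \|v^\star\|^2 \le \|u^\star\|^2$ and $\|v^\star - u^\star\| = \inf_{\phi \in \mathrm{span}\,\bphi}\|u^\star - \K\phi\|$.

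For \eqref{eqweak}, the summand $[\varphi, v^\star - u^\star]$ is invariant under $\varphi \mapsto \varphi - \overline{\varphi}$ for any $\overline{\varphi} \in \mathrm{span}\,\bphi$ (by $[\bphi,v^\star-u^\star]=0$); Cauchy--Schwarz in $\mU$ then yields $|[\varphi, v^\star - u^\star]| \le \|\varphi - \overline{\varphi}\|_\ast \|v^\star - u^\star\|$, and infimizing over $\overline{\varphi}$ together with the shorted-operator identity \eqref{eqkphi} and the bound $\|v^\star - u^\star\| \le \|u^\star\|$ gives $\sigma(\varphi)\|u^\star\|$. For the other summand, $u^\dagger - v^\star = \sum_n (z_n^\dagger - z_n^\star)\chi_n$ lies in $\mU \subset \mH$, so the $\mH^\ast$--$\mH$ duality gives $|[\varphi, u^\dagger - v^\star]| \le \|\varphi\|_{\mH^\ast}\sqrt{(\bz^\dagger-\bz^\star)^T A (\bz^\dagger-\bz^\star)}$; the vector $\bz^\star$ is admissible for the maximization defining $\epsilon$ (it satisfies $F(\bz^\star) = \by$ and $(\bz^\star)^T \Theta^{-1} \bz^\star = \|v^\star\|^2 \le \|u^\star\|^2$), so the square root is at most $\epsilon$.

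For \eqref{eqstrong}, Pythagoras applied to the same decomposition gives $\|u^\dagger - u^\star\|^2 = \|u^\dagger - v^\star\|^2 + \|v^\star - u^\star\|^2$; the second term is $\inf_{\phi}\|u^\star - \K\phi\|^2$ by the projection characterization, while the first equals $(\bz^\dagger-\bz^\star)^T\Theta^{-1}(\bz^\dagger-\bz^\star)$ using $\langle \chi_i,\chi_j\rangle_\mU = \Theta^{-1}_{ij}$, and is bounded by $\epsilon^2$ by the same admissibility of $\bz^\star$ as above. Subadditivity $\sqrt{a^2+b^2}\le a+b$ concludes. The main point requiring care will be the coexistence of the two norms $\mU$ and $\mH$ in the matrix $A$ and in the various feasibility and orthogonality checks for $\bz^\star$; once these are tracked, both inequalities follow from the same clean orthogonal-decomposition argument centered on the GP posterior mean $v^\star$.
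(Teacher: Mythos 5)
Your argument is essentially the paper's own proof: your $v^\star$ is exactly the element $\overline{u} := [\bphi,u^\star]^T\Theta^{-1}\C\bphi$ that the paper introduces, and the treatment of \eqref{eqweak} — splitting off $[\varphi, v^\star-u^\star]$, using invariance under $\varphi\mapsto\varphi-\overline{\varphi}$ together with the shorted-operator identity \eqref{eqkphi} to produce $\sigma(\varphi)\|u^\star\|$, and bounding $[\varphi,u^\dagger-v^\star]$ by $\|\varphi\|_{\mH^\ast}\|u^\dagger-v^\star\|_{\mH}$ with $\bz^\star$ admissible in the program defining $\epsilon$ — matches the paper step for step and is complete. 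Your explicit verification that $v^\star$ is feasible and is the $\mU$-orthogonal projection of $u^\star$ onto $\C(\mathrm{span}\,\bphi)$ is exactly what the paper invokes via its citations.

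One step in your treatment of \eqref{eqstrong} does not close as written: you bound $\|u^\dagger-v^\star\|^2 = (\bz^\dagger-\bz^\star)^T\Theta^{-1}(\bz^\dagger-\bz^\star)$ by $\epsilon^2$ "by the same admissibility of $\bz^\star$", but $\epsilon^2$ is defined as a maximum of the quadratic form built from $A_{ij}=\langle\chi_i,\chi_j\rangle_{\mH}$, not from $\Theta^{-1}_{ij}=\langle\chi_i,\chi_j\rangle_{\mU}$. Admissibility of $\bz^\star$ only gives $(\bz^\dagger-\bz^\star)^TA(\bz^\dagger-\bz^\star)\le\epsilon^2$, i.e.\ control of $\|u^\dagger-v^\star\|_{\mH}$; since $\mU\subset\mH$ is the compact (hence weaker-norm) direction, there is no inequality $\Theta^{-1}\preceq A$ in general, so the $\mU$-norm of $u^\dagger-v^\star$ is not controlled by $\epsilon$ as defined. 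You flagged the $\mU$/$\mH$ coexistence as the delicate point and then did not resolve it. To be fair, the paper's own justification of \eqref{eqstrong} is a single line ("follows from $\|u^\dagger-u^\star\|\le\|u^\star-\overline{u}\|+\epsilon$") that elides exactly the same mismatch, so you have reproduced the published argument faithfully — including its imprecision. A clean fix is to either state \eqref{eqstrong} with $\epsilon$ replaced by the analogous maximum of $(\bz-\bz^\dagger)^T\Theta^{-1}(\bz-\bz^\dagger)$ over the same feasible set, or to read the left-hand side in the $\mH$-norm (at the cost of the first term no longer being the exact best-approximation error). Your Pythagorean identity, which sharpens the paper's triangle inequality, is valid either way.
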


\begin{proof}
  Define $\bv := [ \bphi, u^\star ]$ and     $ \overline{u} := \bv^T \Theta^{-1} \mK \bphi \in \mU$.
  By  triangle inequality we have
  \begin{equation*}
    \big| [ \vphi, u^\star] - [\vphi, u^\dagger] \big| \le
    \big| [ \vphi, u^\star] - [\vphi, \overline{u}]  \big| + \big| [\vphi, \overline{u}] - [\vphi, u^\dagger] \big|.
  \end{equation*}
  Since $[\bphi, u^\star]  = [\bphi, \overline{u}]$ then by \cite[Thm.~5.1]{Owhadi:2014},
  \begin{equation*}
  \big| [ \vphi, u^\star] - [\vphi, \overline{u}] \big|=
  \inf_{\overline{\varphi}\in \text{span}\bphi}
  \big|[\vphi-\overline{\varphi},u^\star-\overline{u}]\big|
  \leq \inf_{\overline{\varphi}\in \text{span}\bphi} \|\vphi-\overline{\varphi}\|_* \|u^\star-\overline{u}\|.
  \end{equation*}
  Then  \eqref{eqkphi} implies that $\inf_{\overline{\varphi}\in \text{span}\bphi} \|\vphi-\overline{\varphi}\|_*=\sigma(\varphi)$.
  Since $\overline{u}$ minimizes $\|u\|$ subject to $[\bphi,u]=[\bphi,u^\star]$, we have $\|u^\star-\overline{u}\|^2=
  \|u^\star\|^2-\|\overline{u}\|^2 $, which leads to
    $| [ \vphi, u^\star] - [\vphi, \overline{u}] | \le \sigma(\vphi) \| u^\star \|.$
  Furthermore,
  $| [\vphi, \overline{u}] - [\vphi, u^\dagger] |\leq
  \|\vphi\|_{\mH^\ast}
  \|\overline{u}-u^\dagger\|_\mH$ and observe that
  $\|\overline{u}-u^\dagger\|_\mH^2=(\bz-\bz^\dagger)A (\bz-\bz^\dagger) $.
  Since
  $\| \overline{u} \| \le \|u^\star\|$ and $\| u^\dagger \| \le \| u\|^\star$, and $
  F([ \bphi, \overline{u}]) =  \by$ we deduce that $\| \overline{u} - u^\dagger \|_{\mH}$ is bounded by the supremum of
  $\| v' - u^\dagger \|_{\mH}$ over all $v' \in \mU$ that belong to the ball of radius $\| u^\star \| $ and satisfy
  the PDE constraints at the collocation points. Inequality \eqref{eqstrong} follows from $\|u^\dagger-u^\star\|\leq  \|u^\star-\overline{u}\|+\epsilon$ and the the fact that $\overline{u}$ is the projection of $u^\dagger$ onto $\text{span}\: \bphi$ \cite[Thm.~12.3]{owhadi2019operator}.
\end{proof}

In the case where $\mH \subset C^{t}(\Omega) \cap C^{t'}(\overline{\Omega})$
we can take $\varphi$ to be of the form $\updelta_x \circ L$ where $L$ is a differential operator of order at most $t$ for $x\in \Omega$, and $t'$ for $x\in \partial \Omega$. Then Proposition~\ref{thmerrorest} allows us to
bound the poitwise error of $u^\dagger$ and its derivatives.
As in Theorem \ref{thm:convergence}, the compact embedding of  $\mU$ into $\mH$ guarantees the convergence of $\sigma(\varphi)$ and $\epsilon$ towards zero as the collocation points become dense in $\Omega$ and $\partial \Omega$.
Note that, by \eqref{eqstrong}, the strong approximation error between $u^\dagger$ and $u^\star$ is controlled by the sum between $\epsilon$ and the best approximation error in $\text{span} \: \C\bphi$. Controlling this best approximation error is typically done in the dual space in the linear setting \cite[Sec.~14.8]{owhadi2019operator}. In the nonlinear setting, it requires  controlling the regularity of the solution (analyzing the underlying PDE). Moreover if the fill distance of the collocation points is $h$, and if the norm $\|\cdot\|$ satisfies the embedding inequality $\|\cdot\|\leq C \|\cdot\|_{H^s(\Omega)}$, then \cite[Lem.~14.39]{owhadi2019operator} implies
$\|u^\star-\overline{u}\|\leq C h^s \| u^\star\|_{H^{2s}(\Omega)}$.

\subsection{Learning the Kernel}\label{sec:KernelFlow}
An important factor in the accuracy and performance of our method is the choice of the kernel $\cc$ (and in turn, the operator
$\C$).  The importance of the kernel choice is readily visible in Theorem~\ref{thm:convergence} through the
assumption that $u^\star$, the solution of the PDE, should belong to the  $\mU$ the RKHS corresponding to $\cc$.
As discussed in \cite{Owhadi:2014}, even for linear PDEs, if the underlying kernel is not adapted to the solution space of the PDE, then the resulting method can perform arbitrarily badly \cite{babuvska2000can}.
This point stresses the importance of learning (or possibly programming \cite{owhadi2019kernelkmd, owhadi2020ideas}) an informed kernel beyond enforcing the PDE at collocation points.
While the problem of selecting the kernel is well-understood  if  the solution is smooth, or  the PDE is linear
with possibly rough coefficients \cite{Owhadi:2014}, it can be challenging when the underlying PDE is nonlinear, and the solution is irregular.
Since our  approach employs an explicit kernel it allows us to learn the kernel directly
using kernel (hyperparameter) learning techniques such as maximum likelihood estimation (MLE) \cite{chen2020consistency}, MAP estimation  \cite{owhadi2020ideas}, or cross-validation (CV)
\cite{friedman2001elements, owhadi2019kernel, chen2020consistency}. Adaptation of these techniques for solution of nonlinear PDEs is an
attractive direction for future research.

\subsection{Non-Gaussian priors}\label{subsecnongaussian}
Although we have employed a Gaussian prior in the design of our nonlinear solver and in our Bayesian IPs, our methodology can naturally be generalized to  priors that can be obtained as nonlinear transformations of Gaussian measures (e.g., instead of replacing the solution $u^\star$ of a nonlinear PDE by a GP $\xi$, one could replace it by $G(\xi)$ where $G$ is a nonlinear function with the appropriate degree of regularity so that the pointwise derivatives of $G(\xi)$ remain well-defined). This approach can potentially lead to efficient methods for computing MAP estimators of Bayesian IPs with certain non-Gaussian priors,
a contemporary problem in the uncertainty quantification and IP literatures.

\section*{Acknowledgments}
The authors gratefully acknowledge support by  the Air Force Office of Scientific Research under MURI award number FA9550-20-1-0358 (Machine Learning and Physics-Based Modeling and Simulation).
HO also acknowledges support
by  the Air Force Office of Scientific Research under award number FA9550-18-1-0271 (Games for Computation and Learning). YC is also grateful for support from the Caltech Kortchak Scholar Program.

\bibliographystyle{siamplain}
\bibliography{GPfornonlinearPDEs}

\appendix

\section{Diagonal Regularization of Kernel Matrices} \label{app:reg-theta}
\as{In the context of GP regression it is common to regularize
the kernel matrices by addition of a small diagonal term;
in that context, doing so has the interpretation of assuming
small Gaussian noise on top of the observations. This diagonal
regularization is sometimes referred to as a ``nugget''.
Here we discuss a related approach to regularizing kernel matrices ($\Theta$ and $\tTheta$)
by perturbing them slightly; for brevity we use the terminology
``nugget'' throughout.} \bh{In Appendix~\ref{sec:nuggets} we present an adaptive
approach to constructing a family of nugget terms that is tailored to our
kernel matrices. Appendices~\ref{app:Nonlinear-Elliptic-Nugget} through \ref{app:Eikonal-Nugget}
gather the detailed choices of nugget terms for the experiments in
Subsections~\ref{sec:elliptic-PDE} through \ref{sec:eikonal-PDE}.
Appendix~\ref{app:Darcy-Nugget} contains the same details for the
experiments in Subsection~\ref{sec:Darcy-flow}.
}
\subsection{An Adaptive Nugget Term}\label{sec:nuggets}
\bh{One of the main computational issues in implementing our methodology is
that the kernel  matrix $\Theta$ is ill-conditioned.} As a consequence,  we need to regularize this matrix to improve the stability of these algorithms.
This regularization may introduce an accuracy floor, so it is important to
choose the regularization term so that it has a small effect on accuracy -- there is thus an accuracy-stability tradeoff.
A traditional strategy for achieving this goal is to add a nugget term $\eta I$ to $\Theta$, where $\eta > 0$ is a small number, and $I$ is the identity matrix. By choosing a suitable $\eta$, the condition number of $\Theta + \eta I$ can be improved
significantly. However, there is an additional level of difficulty in our method since  the matrix $\Theta$ contains multiple blocks whose spectral properties can differ
\as{by orders of magnitude}, since they can involve different orders of derivatives of the kernel function $\cc$. This observation implies that adding $\eta I$, which is uniform across all blocks, may be suboptimal in terms of the accuracy-stability tradeoff.

\bh{
In what follows we adopt the same notation as Subsection~\ref{sec:constructing-theta}.
To address the ill-conditioning of $\Theta$,} we consider adding an adaptive block diagonal
nugget term. More precisely, without loss of generality we can assume $\Theta^{(1,1)}$ corresponds to the pointwise measurements, i.e., $L_1^{\bx} = \updelta_{\bx}$, then, we construct a block diagonal matrix
\begin{equation*}
  R =
  \begin{bmatrix}
    I &  &   &  \\
     & \frac{\tr(\Theta^{(2,2)})}{\tr(\Theta^{(1,1)})}I &  &  \\
     &  & \ddots & \\
     &  &  & \frac{\tr(\Theta^{(Q,Q)})}{\tr(\Theta^{(1,1)})}I
  \end{bmatrix}\, ,
\end{equation*}
where we reweight the identity matrix in each diagonal block by a factor of the trace ratio between $\Theta^{(q,q)}$ and $\Theta^{(1,1)}$. With this matrix, the adaptive nugget term is defined as $\eta R$ with a global nugget parameter $\eta > 0$.
We find that once the parameter $\eta$ is chosen suitably, then our  \bh{Gauss--Newton}  algorithm converges quickly and in a
stable manner. During computations, we can compute the  Cholesky factors of $\Theta + \eta R$ offline and use
back-substitution to invert them in each iteration of \bh{Gauss--Newton.}

\begin{runningexample}{NE}
For the numerical experiments in Subsection~\ref{sec:intro:implementation} pertaining to the
 nonlinear elliptic PDE \eqref{elliptic-proto-PDE}, we observed that  $\Theta$ has
 two distinct diagonal blocks, i.e., one block corresponding to the pointwise evaluation functions
 and with entries $K(\bx_m, \bx_i)$ and another block corresponding to pointwise evaluations of
 the Laplacian operator and with entries $\Delta^\bx \Delta^{\bx'} K( \bx, \bx') |_{(\bx_m, \bx_i)}$.
With $M=1024$ collocation points, the trace ratio between these blocks  was of order $4000$. Thus, the difference between $I$ and $R$ is significant. Our experiments also showed that if we only add $\eta I$ to regularize the matrix, \as{then
 choosing $\eta$  as large as  $O(10^{-4})$ was} needed to get meaningful results, while using the nugget term $\eta R$, we could choose $\eta = 10^{-9}$
 which leads to significantly improved results. We further explore these details below
 and in particular in Table~\ref{table: Comparison between traditional nugget terms and our adaptive nugget terms}.
\end{runningexample}

\subsection{Choice of Nugget Terms for the Nonlinear Elliptic PDE}
\label{app:Nonlinear-Elliptic-Nugget}
\bh{Below we discuss the choice of the nugget term in the numerical experiments of Subsection~\ref{sec:elliptic-PDE}. The results in Figure~\ref{fig:nonlinear elliptic, demon}
and Table~\ref{Table:Comparison between the elimination and relaxation approaches}
were obtained by employing the adaptive nugget term of Appendix~\ref{sec:nuggets} with global parameters $\eta = 10^{-13}$
and $\eta=10^{-12}$ respectively.

We  also compared our adaptive nugget term to more
standard choices, i.e.,  nugget
terms of the form $\eta I$ against our adaptive nugget term $\eta R$ with the nonlinearity $\tau(u) = u^3$}.
Cholesky factorization was applied to the regularized matrix and the subsequent \bh{Gauss--Newton}  iterations were employed to obtain the solutions. The $L^2$ and $L^{\infty}$ errors of the converged solutions are shown in Table \ref{table: Comparison between traditional nugget terms and our adaptive nugget terms}. The results were averaged over 10 instances of a random sampling of the collocation points.
Here, ``nan'' means the algorithm was unstable and diverged.
\as{To obtain these results we terminated all Gauss-Newton iterations after 5 steps.
Due to randomness in the initial guess, and in examples where random collocation
points were used due to this too,} \bh{we observed
that some random trials did not converge in 5 steps. This variation also explains the non-monotonic behavior of the error in Table~\ref{table: Comparison between traditional nugget terms and our adaptive nugget terms} as $\eta$ decreases. These effects were more
profound  for the standard nugget term.
Besides these small variations our results clearly
demonstrate the superior accuracy and stability that is provided by our adaptive nugget
 term versus the standard nugget choice.}

\begin{table}[ht]
\centering
\yc{
\begin{tabular}{lllllll}
\hline
 $\eta$ & $ 10^{-1}$ & $10^{-2}$ & $ 10^{-3}$ & $ 10^{-4}$ & $ 10^{-5}$ & $ 10^{-6}$ \\ \hline
$\Theta + \eta I$: $L^2$ error      & 7.77e-02       & 4.46e-02 & 2.65e-02             & 1.56e-02             & 1.32e-02           & 1.46e-02                  \\
$\Theta + \eta I$: $L^\infty$ error & 6.43-01    & 3.13e-01             & 1.99e-01              & 1.47e-01              & 1.33e-01              & 1.43e-01
\\
$\Theta + \eta R$: $L^2$ error      & 8.49e-02      & 9.29e-03    & 9.10e-03     & 3.34e-03   & 1.01e-03  & 3.36e-04      \\
$\Theta + \eta R$: $L^\infty$ error & 4.02e-01        & 7.86e-02     & 5.58e-02       & 2.21e-02      & 7.17e-03    & 3.87e-03      \\ \hline
 $\eta$ & $ 10^{-7}$ & $10^{-8}$ & $ 10^{-9}$ & $ 10^{-10}$ & $ 10^{-11}$ & $ 10^{-12}$ \\ \hline
$\Theta + \eta I$: $L^2$ error      & 1.37e-02  & 8.623-03      & 1.01e-02           & 1.92e-02              & nan              & nan                 \\
$\Theta + \eta I$: $L^\infty$ error & 1.81e-01    & 8.28e-02              & 1.07e-01             & 3.05e-01              & nan              & nan
\\
$\Theta + \eta R$: $L^2$ error      & 1.55e-04       & 7.05e-05     & 4.56e-05    & 6.30e-06   & 1.73e-06  & 8.31e-07     \\
$\Theta + \eta R$: $L^\infty$ error & 2.41e-03        & 1.07e-03     & 7.66e-04       & 8.92e-05      & 2.62e-05   & 1.17e-05      \\ \hline
\end{tabular}}
\label{table: Comparison between traditional nugget terms and our adaptive nugget terms}
\caption{Comparison of solution errors between standard nugget terms and our adaptive nugget terms for the nonlinear elliptic PDE \eqref{elliptic-proto-PDE}. Collocation points are uniformly sampled with $M=1024$ and $M_{\Omega}=900$ with a Gaussian kernel with
lengthscale parameter $\sigma=0.2$. Results are averaged over 10 realizations of the
random collocation points. \yc{The maximum Gauss-Newton iteration was  5.}}
\end{table}

\subsection{Choice of Nugget Terms for Burger's Equation}
\label{app:Burgers-Nugget}
\bh{
For the numerical experiments in Subsection~\ref{sec:Burgers-Revisited}
we primarily used our adaptive nugget term
 as outlined in Appendix~\ref{sec:nuggets}. For the results in Figure~\ref{fig: Burgers demonstration}
we used a global nugget parameter \yc{$\eta = 10^{-10}$}.  For the convergence
analysis in Table~\ref{table: Burgers convergence} we varied $\eta$ for different number
of collocation points to achieve better performance. More precisely, for $M \le 1200$ we
used a larger nugget $\eta = 10^{-5}$ and for $M \ge 2400$ we used
$\eta = 10^{-10}$. Choosing a smaller nugget for small values of $M$
would still yield equally accurate results but required more iterations of the
Gauss-Newton algorithm.}

\subsection{Choice of Nugget Terms for the Eikonal Equation}
\label{app:Eikonal-Nugget}
\bh{Our numerical experiments in Subsection~\ref{sec:eikonal-PDE}
also used the adaptive nugget of Appendix~\ref{sec:nuggets}. Indeed, we followed
a similar approach to choosing the global parameter $\eta$ as in the case of Burger's equation
outlined in Appendix~\ref{app:Burgers-Nugget} above.

For the results in Figure~\ref{fig:Eikonal} we used $\eta = 10^{-10}$. For the
convergence analysis in Table~\ref{table: Eikonal equation, error} we varied $\eta$ for
different values of $M$, i.e., we chose $\eta = 10^{-5}$ for $M \le 1200$ and $\eta = 10^{-10}$
for $M \ge 2400$. Analogously to the Burger's experiment we observed that smaller values
of $\eta$ for smaller choices of $M$ cause the Gauss-Newton iterations to converge more slowly.
Thus varying $\eta$ with $M$ improved the efficiency of our framework.
}

\subsection{Choice of Nugget Terms for Darcy Flow}
\label{app:Darcy-Nugget}
\bh{
    Both of the matrices $\Theta, \tTheta$
    outlined in Subsection~\ref{sec:IP-setup}
    are dense and ill-conditioned in the IP setting and so an appropriate
    nugget term should be chosen to regularize them. In the IP setting we propose to add adaptive nuggets for
    both $\Theta, \tTheta$ using the same strategy as in Appendix~\ref{sec:nuggets}, except that
    the nuggets are constructed independently for each matrix. To this end we set $\Theta \leftarrow \Theta + \eta R$
    and $\tTheta \leftarrow \tTheta + \tilde{\eta} \tilde{R}$, where the $\tilde{\eta}, \tilde{R}$ denote the
    global nugget parameter and the re-weighted identity matrix corresponding to $\tTheta$.
   For the numerical experiments in Figure~\ref{fig:Darcy flow} we used $\eta = \tilde \eta = 10^{-5}.$
}

\end{document}